\def\CA{\mathcal{A}}
\def\CC{\mathcal{C}}
\def\CM{\mathcal{M}}
\def\CF{\mathcal{F}}
\def\CL{\mathcal{L}}
\def\CD{\mathcal{D}}
\def\CO{\mathcal{O}}
\def\CPD{\mathcal{PD}}
\def\x{\textup{x}}
\def\y{\textup{y}}
\def\AC{\mathcal{AC}}
\def\Fa{\mathfrak{a}}
\def\Fb{\mathfrak{b}}
\def\Fc{\mathfrak{c}}
\def\N{\mathbb{N}}
\def\Z{\mathbb{Z}}
\def\P{\mathbb{P}}
\def\S{\mathbb{S}}
\def\1{\mathds{1}}
\def\kh{\hat{\kappa}}
\def\kt{\tilde{\kappa}}
\renewcommand{\emptyset}{\varnothing}
\def\R{\mathbb{R}}
\def\E{\mathbb{E}}
\newtheorem{counter}{}[section]
\newtheorem{theorem}[counter]{Theorem}
\newtheorem{definition}[counter]{Definition}
\newtheorem{lemma}[counter]{Lemma}
\newtheorem{claim}[counter]{Claim}
\newtheorem{remark}[counter]{Remark}
\newtheorem{proposition}[counter]{Proposition}
\newtheorem{corollary}[counter]{Corollary}
\newtheorem{fact}[counter]{Fact}
\def\eps{\varepsilon}
\renewcommand{\complement}{\mathsf{c}}
\newcommand{\diam}{\textup{diam}}
\newcommand{\tail}{\textup{tail}}
\newcommand{\head}{\textup{head}}
\newcommand{\inv}{\textup{inv}}
\newcommand{\arc}{\textup{arc}}
\newcommand{\indt}{\textup{i}^\textup{t}}
\newcommand{\indh}{\textup{i}^\textup{h}}
\newcommand{\var}{\textup{Var}}
\title{On the Cycle Structure of Mallows Permutations}
\begin{document}
\author{Alexey Gladkich\textsuperscript{1} \and Ron Peled\textsuperscript{1}}

\maketitle

\footnotetext[1]{School of Mathematical Sciences, Tel Aviv
University, Tel Aviv, 69978, Israel. Supported by ISF grants 1048/11
and 861/15 and IRG grant SPTRF. alexeygl@mail.tau.ac.il,
peledron@post.tau.ac.il. }

\begin{abstract}
We study the length of cycles of random permutations drawn from the
Mallows distribution. Under this distribution, the probability of a
permutation $\pi \in \S_n$ is proportional to $q^{\inv(\pi)}$ where
$q>0$ and $\inv(\pi)$ is the number of inversions in $\pi$.

We focus on the case that $q<1$ and show that the expected length of
the cycle containing a given point is of order $\min\{(1-q)^{-2},
n\}$. This marks the existence of two asymptotic regimes: with high
probability, when $n$ tends to infinity with $(1-q)^{-2} \ll n$ then
all cycles have size $o(n)$ whereas when $n$ tends to infinity with
$(1-q)^{-2}\gg n$ then macroscopic cycles, of size proportional to
$n$, emerge. In the second regime, we prove that the distribution of
normalized cycle lengths follows the Poisson-Dirichlet law, as in a
uniformly random permutation. The results bear formal similarity
with a conjectured localization transition for random band matrices.

Further results are presented for the variance of the cycle lengths,
the expected diameter of cycles and the expected number of cycles.
The proofs rely on the exact sampling algorithm for the Mallows
distribution and make use of a special diagonal exposure process for
the graph of the permutation.
\end{abstract}


\section{Introduction} The cycle structure of a random permutation
picked uniformly from $\S_n$, the permutation group on $n$ elements,
is a classic topic in probability theory. Of the abundant literature
on it we mention two key facts: The distribution of the length of a
cycle containing a given point is uniform on $\{1,\ldots, n\}$.
Moreover, the joint distribution of the lengths of the longest
cycles in the permutation has an explicit limit; the sorted vector
of cycle lengths, normalized by $n$, converges in distribution to
the Poisson-Dirichlet distribution with parameter one.

In this work we study the cycle structure of a random permutation
distributed according to the Mallows distribution. The Mallows
distribution is a non-uniform distribution on permutations which was
introduced by Mallows in statistical ranking theory \cite{M57}. It
has recently been the focus of several studies in varied contexts
including mixing times of Markov chains \cite{BBHM05,DR00},
statistical physics \cite{S09, SW15}, learning theory \cite{BM09},
q-exchangeability \cite{GO10,GO12} and the problem of the longest
increasing subsequence \cite{MS13, BP13, BB16}. Borodin, Diaconis
and Fulman \cite[Section 5]{BDF10} considered a class of models of
random permutations (denoted $P_\theta$ there) for which the Mallows
distribution is the prime example. They noted that many of the
``usual questions'' of applied probability and enumerative
combinatorics remain open for such models and asked ``Picking a
permutation randomly from $P_{\theta}(\cdot)$, what is the
distribution of the cycle structure, longest increasing subsequence,
\ldots?''. Our work answers the part of this question pertaining to
the cycle structure of the Mallows distribution.

The Mallows distribution on $\S_n$ is parameterized by a real number
$q>0$ and is denoted $\mu_{n,q}$. It is given by
\begin{equation}\label{eq:mu_n_q_def}
\mu_{n,q}[\pi] := \frac{q^{\inv(\pi)}}{Z_{n,q}}
\end{equation}
where
\begin{equation*}
  \inv(\pi) := |\{(s,t)\mid\text{$s<t$ and $\pi_s>\pi_t$}\}|
\end{equation*}
denotes the number of inversions in $\pi$, and $Z_{n,q}$ is a
normalizing constant, given explicitly by the following formula
\cite[Corollary 1.3.13]{S12},
\begin{equation*}
Z_{n,q} = \displaystyle\prod_{i=1}^{n}\left(1 + q + \cdots +
q^{i-1}\right)=\displaystyle\prod_{i=1}^{n}\frac{1-q^{i}}{1-q}.
\end{equation*}
The Mallows distribution with parameter $q=1$ coincides with the
uniform distribution on $\S_n$. In this paper we restrict attention
to the case that $0<q<1$ (a brief discussion of the case $q>1$ is
given in Section~\ref{sec:discussion}). As is well known,
$\inv(\pi)$ equals the minimal number of \emph{adjacent}
transpositions required to bring $\pi$ to the identity. Thus, when
$0<q<1$, the Mallows distribution gives higher weight to
permutations which are closer to the identity in an underlying
one-dimensional geometry.

We are mainly interested in the properties of the Mallows
distribution for $q$ close to~$1$, usually as a function of $n$,
although our results apply in the full range of $0<q<1$.
Figure~\ref{fig:mallows_sample} depicts samples of the Mallows
distribution.
 One simple feature of a Mallows distribution is that it
typically displaces elements by a small amount. This is quantified
in the following statement: there exists an absolute constant $c>0$
such that if $\pi\sim\mu_{n,q}$ then for all $0<q<1$ and $1\le s\le
n$,
\begin{equation}\label{eq:displacements}
  c \cdot \min \left\{ \frac{q}{1-q}, n-1\right\} \le \E|\pi_s - s| \le \min \left\{ \frac{2q}{1-q},n-1\right\},
\end{equation}
see \cite{BP13} for a proof and related concentration bounds or
\cite{BM09, GO12} for similar statements.
\pgfplotsset{width=\textwidth}
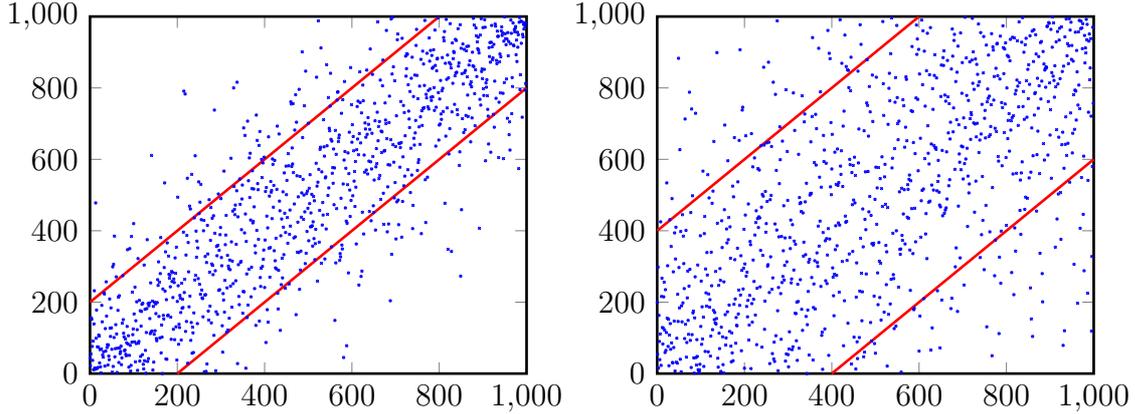
\begin{figure}
\begin{subfigure}[t]{0.46\textwidth}
\begin{tikzpicture}
\begin{axis}[
xmin = 0,
xmax = 1000,
ymin = 0,
ymax = 1000,
line width = 1pt,
]
\addplot[
red,
domain=0:800,
samples=801,
line width = 1pt
]
{x+200)};
\addplot[
red,
domain=200:1000,
samples=801,
line width = 1pt
]
{x-200)};
\addplot+[blue, only marks, mark size = 0.1pt] table {mallowsSample1000n99q.dat};
\end{axis}
\end{tikzpicture}
\end{subfigure}
\begin{subfigure}[t]{0.46\textwidth}
\begin{tikzpicture}
\begin{axis}[
xmin = 0,
xmax = 1000,
ymin = 0,
ymax = 1000,
line width = 1pt,
]
\addplot[
red,
domain=0:600,
samples=601,
line width = 1pt
]
{x+400)};
\addplot[
red,
domain=400:1000,
samples=601,
line width = 1pt
]
{x-400)};
\addplot+[blue, only marks, mark size = 0.1pt] table {mallowsSample1000n995q.dat};
\end{axis}
\end{tikzpicture}
\end{subfigure}
\caption{
    Graphs of samples of the Mallows distribution $\mu_{n,q}$ with $n=1000$, $q=0.99$ (left) and $q = 0.995$ (right). The red lines are at vertical distance $\frac{2}{1-q}$ from the
    diagonal. They delimit a region containing most of the points of the permutation, see also \eqref{eq:displacements}.
} \label{fig:mallows_sample}
\end{figure}

Thus the expected
displacements are of order $o(n)$ when $\frac{1}{1-q} \ll n$ and it
is natural to ask whether this also results in shorter cycles.
Our first result determines the expected length of cycles.

\smallskip {\bf Notation:} For two quantities $x,y$, which may depend
on other parameters such as $n$ or $q$, we write $x\approx y$ if
there exist absolute constants $c,C>0$ such that $c\, y\le x \leq
C\, y$.

For a permutation $\pi\in \S_n$ and $1\le s\le n$ we let $\CC_s=\CC
_s(\pi)$ be the orbit of $s$ in $\pi$, i.e., the set of points in
the cycle of $\pi$ which contains $s$.
\begin{theorem}[Expected Cycle Length]\label{cycle_length}
 Let $n\ge 1$, $0<q<1$ and $\pi\sim\mu_{n,q}$. Then
    \[
        \E|\CC_s|\approx \min\left\{\frac{1}{(1-q)^{2}},\, n\right\}\quad \text{for all $1\le s\le n$}.
    \]
\end{theorem}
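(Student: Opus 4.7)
Write $k := 1/(1-q)$, and recall from \eqref{eq:displacements} that the typical displacement $|\pi_s - s|$ is of order $\min\{k,n\}$. Since $|\CC_s|\le n$ always, the bound $\E|\CC_s|\lesssim n$ is immediate; the content of the theorem is therefore to prove, in the regime $k \lesssim \sqrt{n}$, that $\E|\CC_s| \asymp k^2$.

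I would explore the cycle of $s$ by setting $X_0 := s$, $X_j := \pi^j(s)$, and stopping when $X_m = s$, so that $m = |\CC_s|$. The strategy is to view $(X_j)_{j\ge 0}$ as an approximate one-dimensional random walk with step variance of order $k^2$. To make this precise I would use the exact sampling algorithm for the Mallows distribution, which ensures that after revealing the values of $\pi$ on a subset $A \subset \{1,\ldots,n\}$, the conditional distribution of $\pi$ on $[n]\setminus A$ is again Mallows-like on the remaining indices. In particular, conditionally on $X_0,\ldots,X_j$ and the previously revealed $\pi$-values, the next step $X_{j+1}=\pi(X_j)$ is concentrated within distance $\lesssim k$ of $X_j$ with geometric tails on that scale.

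For the upper bound, the cycle closes at step $j+1$ exactly when $\pi(X_j)=s$, which in the random walk picture happens with conditional probability of order $1/(k\sqrt{j})$ (the local-CLT density at $s$ of a walk of length $j$ with variance $\asymp k^2$). Summing, $\sum_{j=1}^m 1/(k\sqrt{j})\asymp \sqrt{m}/k$ reaches order $1$ precisely at $m\asymp k^2$; a geometric tail bound for $|\CC_s|$ beyond this threshold (using the self-avoidance constraint, since the walk cannot revisit $X_1,\ldots,X_{j-1}$) then yields $\E|\CC_s|\lesssim k^2$. For the lower bound, the same one-step estimate gives that for $m\ll k^2$ the probability of closing within the first $m$ steps is $\lesssim \sqrt{m}/k\ll 1$, so $\P(|\CC_s|>m)\gtrsim 1$ and hence $\E|\CC_s|\gtrsim k^2$.

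The main obstacle is rigorously justifying the random walk analogy: the increments $X_{j+1}-X_j$ are strongly dependent, since each conditions on the entire past of revealed $\pi$-values, and this past need not fit into a clean Markovian frame as the exploration proceeds. The \emph{diagonal exposure process} alluded to in the abstract is the right device: by exposing points of the graph of $\pi$ in an order adapted to the diagonal rather than left-to-right, the conditional distribution of the unexposed portion retains a Mallows-type structure on the remaining indices, enabling uniform control of each one-step tail and a coupling with a genuine random walk. The bulk of the technical work goes into establishing this conditional step estimate and converting it into the two-sided return-time bound sketched above.
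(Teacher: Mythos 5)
Your heuristic identifies the correct answer and the correct length scale, but as written there is a genuine gap: the entire argument rests on treating the orbit exploration $X_j=\pi^j(s)$ as a random walk with a local-CLT return density $\asymp 1/(k\sqrt{j})$, and neither the one-step conditional estimate nor the local CLT is established or establishable with the tools you invoke. The difficulty is not merely technical. After revealing $\pi$ on the scattered set $\{s,\pi(s),\ldots,\pi^{j-1}(s)\}$, the conditional law of $\pi(X_j)$ is \emph{not} "Mallows-like on the remaining indices" in any sense that controls positions: Lemma~\ref{lem:ggm_subgraph} only controls the \emph{relative order} of the unrevealed points, and Lemma~\ref{atomic_exposure} only applies when the conditioning region is of the staircase form $\{x<s \text{ or } y<t\}$ — not an arbitrary union of revealed columns and rows. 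The diagonal exposure process, which you correctly guess is the key device, exposes the graph in increasing order of the \emph{diagonal coordinate} $t$, not in orbit order, so it cannot be used to drive the exploration $X_0,X_1,X_2,\ldots$ you propose. Your own closing paragraph concedes that "the bulk of the technical work" remains; that remaining work is precisely the proof.

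It is worth seeing how the paper sidesteps the random-walk analogy entirely. For the upper bound it uses $|\CC_s|\le \diam(\CC_s)+1$ and bounds the diameter: in diagonal time, the arc containing $s$ closes at step $t+1$ with conditional probability at least $\bigl(\tfrac{q^{\kappa_t}-q^{\kappa_t+1}}{1-q^{n-t}}\bigr)^2$ (Lemma~\ref{lemma_U_arc_eq}), so on the event that the arc chain $\kappa$ stays $O(\xi)$ — which holds with high probability by Theorem~\ref{mallows_chain_bounds} — the arc survives time $t-s$ with probability at most $\exp(-c(t-s)(1-q)^2)$; for $q$ bounded away from $1$ one instead uses the return time of $\kappa$ to zero. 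For the lower bound it writes $\E|\CC_s|=\sum_t\P[t\in\CC_s]$ and shows $\P[t\in\CC_s]\gtrsim 1$ for $|t-s|\le\xi^2$ by tracking the two arcs containing $s$ and $t$: both stay open with probability $\gtrsim 1$, and at each of $\asymp\xi^2$ diagonal times they merge with probability $\asymp(1-q)^2$. This replaces your return-probability sum $\sum_j 1/(k\sqrt j)$ by a sum organized over the diagonal coordinate, where each term is computable exactly from the sampling formula, and no coupling with a genuine random walk is ever needed. To turn your sketch into a proof you would either have to develop the orbit-exploration machinery from scratch (including a quantitative local CLT for a self-avoiding, strongly dependent process) or reorganize the argument by diagonal time as the paper does.
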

Thus the expected length of the cycle containing a given point
transitions from being $o(n)$ when $\frac{1}{1-q}\ll \sqrt{n}$ to
being $\Omega(n)$ in the complementary regime. The same is true also
for the \emph{maximal} cycle length in the permutation, see
Claim~\ref{cl:pdl_arcs_claim1}.
We say this transition marks the
emergence of \emph{macroscopic cycles} in the permutation.

Theorem~\ref{cycle_length} identifies a similarity between the
uniform distribution and the Mallows distribution in the regime that
macroscopic cycles exist, namely, that the expected cycle lengths in
both distributions are of order $n$. The two distributions are quite
different in many other respects, for instance, when $\frac{1}{1-q}
\ll n$ they are distinguished even by their typical displacements as
measured by \eqref{eq:displacements}. Our next result shows that as
far as the lengths of the \emph{long} cycles are concerned, the
similarities extend much further than what may initially be
expected: the two distributions give rise to the same limit law.

\begin{theorem}[Poisson-Dirichlet Law]\label{thm:pdl}
Suppose that the sequence $(q_n)$ satisfies
\begin{equation*}
  0<q_n<1 \quad\text{and}\quad (1-q_n)^2\cdot n\to 0.
\end{equation*}
Let $\pi\sim\mu_{n,q_n}$ and let $\ell_1\geq \ell_2 \ge \ldots$ be
the sorted lengths of cycles in $\pi$. Then, as ${n\to\infty}$,
\[
\text{$\tfrac{1}{n}\left(\ell_1,\ell_2,\ldots\right)$ converges in
distribution to the Poisson-Dirichlet law with parameter one.}
\]
In addition, for any sequence $(s_n)$ satisfying $1\leq s_n\le n$,
as $n\to\infty$,
\[
    \text{$\tfrac{1}{n}|\CC_{s_n}|$ converges in distribution to the uniform distribution on $[0,1]$.}
\]
\end{theorem}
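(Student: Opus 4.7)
The plan is to prove both statements via a unified moment/paintbox argument. The Poisson--Dirichlet law with parameter one is characterized by its induced sample partitions: if one samples $k$ iid points from a PD$(1)$ mass partition and declares two equivalent when they fall in a common atom, the resulting partition of $\{1,\ldots,k\}$ is the Ewens $(\theta{=}1)$ random partition, which assigns probability $\frac{1}{k!}\prod_j (b_j-1)!$ to a partition with block sizes $b_1,\ldots,b_m$. It therefore suffices, for every fixed $k$, to show that when $T_1,\ldots,T_k$ are drawn uniformly from $\{1,\ldots,n\}$ (independently of $\pi$), the partition of $\{1,\ldots,k\}$ induced by the cycle structure of $\pi\sim\mu_{n,q_n}$ converges in distribution to Ewens $(\theta{=}1)$.

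The case $k=1$ gives the second statement of the theorem: under uniform $T$, the normalized cycle length $|\CC_T|/n$ is the size-biased pick from the limit mass partition, and a size-biased pick from PD$(1)$ is Uniform$[0,1]$. To deduce the same limit for a \emph{deterministic} sequence $s_n$ (noting that $\mu_{n,q}$ is not exchangeable), I would compute moments directly: for every fixed $k\ge 1$,
\[
\E\,|\CC_{s_n}|^k \;=\; \sum_{t_1,\ldots,t_k=1}^n \P\bigl(s_n,t_1,\ldots,t_k\text{ lie in a common cycle of }\pi\bigr) \;\sim\; \frac{n^k}{k+1},
\]
matching the moments of Uniform$[0,1]$. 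Theorem~\ref{cycle_length} already supplies matching upper and lower bounds of order $n^k$, so the task is to pin down the exact constant $1/(k+1)$ uniformly in $s_n$.

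Everything thus reduces to estimating the joint probability that several prescribed points share a cycle. Here I would invoke the diagonal exposure process alluded to in the abstract: expose the entries of the permutation matrix along the diagonals $\pi_i-i=c$ in a prescribed order, tracking how the partial cycle fragments coalesce. In the regime $(1-q_n)^2 n\to 0$ the typical displacement satisfies $|\pi_s-s|\approx (1-q_n)^{-1}\gg\sqrt{n}$, so the exposed entries spread across the matrix and one expects the coalescence dynamics of cycle fragments to reproduce, on a macroscopic scale, the cycle statistics of uniform permutations. Concretely, for distinct $s_1,\ldots,s_m$ whose fractional positions $s_i/n$ are separated the target is
\[
\P\bigl(s_1,\ldots,s_m\text{ lie in a common cycle of }\pi\bigr) \;\longrightarrow\; \frac{1}{m},
\]
with analogous limits for more general partition patterns given by the Ewens formula; summing these over $t_1,\ldots,t_k$ yields the desired moment asymptotics.

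The main obstacle is this multi-point correlation estimate. For uniform permutations the analogous computation is immediate from the exact distribution of cycle lengths, but $\mu_{n,q_n}$ admits no such closed form, and one must show that the one-dimensional geometry still visible at the level of displacements nonetheless dissolves at the level of cycle-fragment coalescence. The difficulty is not obtaining the right orders of magnitude---Theorem~\ref{cycle_length} essentially handles that---but extracting the \emph{precise} constants $1/(k+1)$ and $\tfrac{1}{k!}\prod(b_j-1)!$. This is where the diagonal exposure must be analysed carefully, ruling out anomalous configurations in which, say, two macroscopic cycles remain stubbornly disjoint because of a boundary effect, and showing that the merging dynamics asymptotically produce exactly the Ewens partition on any fixed number of sample points.
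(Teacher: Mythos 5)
Your reduction framework is sound---the Poisson--Dirichlet law with parameter one is indeed characterized by the Ewens $(\theta=1)$ sampling formula for the partition induced on $k$ uniform sample points, and a size-biased pick from it is uniform on $[0,1]$---but the proposal stops exactly where the theorem begins. The entire content of the result is the multi-point coalescence estimate $\P[s_1,\ldots,s_m\text{ share a cycle}]\to \tfrac{1}{m}$ (and its refinements for general partition patterns), and you explicitly defer this to an unspecified analysis of the diagonal exposure process. Theorem~\ref{cycle_length} gives only two-sided bounds of the correct order, not the constants, and nothing in your outline indicates how the exposure dynamics would be shown to produce exactly the Ewens probabilities; ``ruling out anomalous configurations'' \emph{is} the theorem, not a step toward it. As written, this is a reduction to an unproven lemma that is at least as hard as the original statement.

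The paper avoids multi-point correlations entirely. It conditions on $\Gamma_\pi\cap U$ with $U=\{x\le n-m\text{ or }y>n-m\}$ for $m\to\infty$ slowly ($m/n\to 0$ and $q^{m^2}\to 1$). The cycles of $\pi$ are then obtained by stitching the $U$-maximal open arcs $\CO_U$ (at most $m$ of them) via a permutation $\tau$ of $\CO_U$ whose relative order is, by Lemma~\ref{lem:ggm_subgraph}, exactly Mallows $\mu_{k,q}$ with $k=|\CO_U|\le m$; since $\inv(\cdot)\le k^2$, this law is within total variation distance $1-q^{m^2}\to 0$ of the uniform distribution on $\S_k$. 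Separate first- and second-moment estimates on arc lengths (Proposition~\ref{prop:pdl_prop_diag_arcs} together with Claim~\ref{cl:pdl_arcs_claim1}) show that every arc has length $o(n)$ and that the open arcs cover $(1-o(1))n$ points, so the weighted cycle lengths of a uniform permutation of $\CO_U$ with weights $|\Fa|/n$ are $\ell^2$-close to the unweighted normalized lengths (Proposition~\ref{prop:pdl_convergence_prop1}), and the classical PD$(1)$ limit for uniform permutations finishes the proof; the same conditional-uniformity statement also yields the $U[0,1]$ limit for $\tfrac{1}{n}|\CC_{s_n}|$. If you wish to salvage the moment approach, you would essentially have to establish this conditional uniformity of the stitching permutation anyway, at which point the moments become superfluous.
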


\pgfplotsset{width=\textwidth}
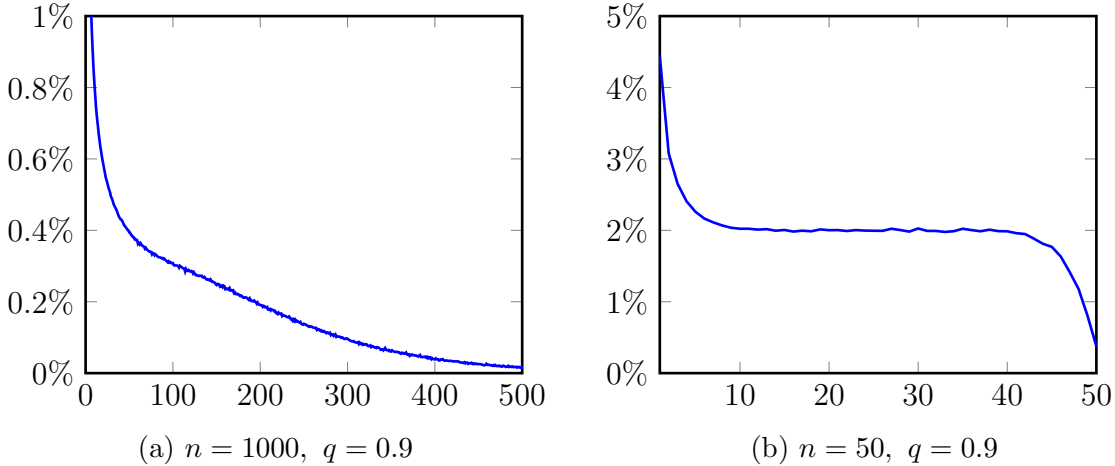
\begin{figure}
\begin{subfigure}{0.46\textwidth}
\begin{tikzpicture}
\begin{axis}[
xmin = 0,
xmax = 500,
ymin = 0,
ymax = 1,
ytick = {0,0.2,0.4,0.6,0.8,1},
yticklabels = {0\%,0.2\%,0.4\%,0.6\%,0.8\%,1\%},
line width = 1pt] \addplot[blue] table {CycleLength1000n9q.dat};

\end{axis}
\end{tikzpicture}
\caption{   $n=1000, \ q=0.9$}
\end{subfigure}
\quad
\begin{subfigure}{0.46\textwidth}
\begin{tikzpicture}
\begin{axis}[
xmin = 1,
xmax = 50,
ymin = 0,
ymax = 5,
ytick = {0,1,2,3,4,5},
yticklabels = {0\%,1\%,2\%,3\%,4\%,5\%},
line width = 1pt ]
\addplot[blue] table {CycleLength50n9q.dat};

\end{axis}
\end{tikzpicture}
\caption{   $n=50, \ q=0.9$} \label{fig:cycle_length_uni_n50}
\end{subfigure}
\caption{Distribution of the length of the cycle containing a
uniform random point. Obtained empirically with 1000000 samples.}
\label{fig:cycle_length}
\end{figure}

Our results provide further information on the cycle lengths in the
regime in which there are no macroscopic cycles. We show that the
cycle lengths are not concentrated in the sense that their standard
deviation has the same order of magnitude as their expectation.
\begin{theorem}[Variance of Cycle Length]\label{cycle_var}
    Let $n\ge 1,\, 0<q<1$ and $\pi\sim\mu_{n,q}$. Then
    \[
        \var|\CC_s|\approx \min\left\{\frac{q}{(1-q)^{4}},
        (n-1)^2\right\}\quad\text{for all $1\leq s \leq n$}.
    \]
\end{theorem}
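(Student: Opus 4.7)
The plan is to estimate the second moment via $\var|\CC_s|=\E|\CC_s|^2-\mu^2$, where $\mu:=\E|\CC_s|$ is already pinned down by Theorem~\ref{cycle_length}. I split into the macroscopic regime $(1-q)^{-2}\gtrsim n$ and the localised regime $(1-q)^{-2}\ll n$, corresponding to the two sides of the minimum. In the macroscopic regime the upper bound $\var|\CC_s|\le(n-1)^2$ is immediate from $|\CC_s|\in[1,n]$, and the matching lower bound $\var|\CC_s|\gtrsim n^2$ follows from the non-degeneracy of $|\CC_s|/n$ supplied by Theorem~\ref{thm:pdl} (whose proof presumably yields a quantitative version valid whenever $(1-q)^{-2}\gtrsim n$, not only in the limit).

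The bulk of the work is the localised regime, and for the upper bound I would exploit the elementary inequality
\[
\var|\CC_s|\le\E\bigl[(|\CC_s|-1)^2\bigr]=\sum_{t\ne s,\,u\ne s}\Pr(t,u\in\CC_s),
\]
which holds because $(\mu-1)^2\ge 0$, and which automatically produces the factor of $q$ for small $q$ since every term vanishes on the typical event $|\CC_s|=1$. I would then bound each joint probability $\Pr(t,u\in\CC_s)$ by running the diagonal exposure process from Theorem~\ref{cycle_length} in two stages: first expose the segment of $\CC_s$ that reaches $t$, then expose its continuation to $u$. The conditional law at each stage should inherit the same ``thin band'' decay as in the unconditional case, yielding a product-type estimate whose double sum is $\lesssim q/(1-q)^4$.

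For the matching lower bound $\var|\CC_s|\gtrsim q/(1-q)^4$, the plan is to demonstrate genuine non-concentration of $|\CC_s|/\mu$ by producing two constant-probability events on which $|\CC_s|$ attains substantially different values. A natural mechanism is supplied by the ``cut'' events $\pi(\{1,\dots,k\})=\{1,\dots,k\}$, whose density along $k$ is of order $(1-q)^2$; with constant probability the nearest cuts surrounding $s$ lie at distance of order $(1-q)^{-2}$, pinning $|\CC_s|$ to one constant multiple of $\mu$, while with a different constant probability they lie at a different distance, producing a different multiple. The factor of $q$ in the stated bound is separately secured in the small-$q$ regime by noting that a single inversion near $s$ already produces $|\CC_s|\ge 2$ and contributes order $q$ to $\var|\CC_s|$ by a direct calculation.

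The principal obstacle is this lower bound. Upper bounds follow reasonably mechanically from the exposure-process machinery already developed for Theorem~\ref{cycle_length}; by contrast, two-sided non-concentration of $|\CC_s|/\mu$ is not a consequence of moment inequalities alone and must be extracted from the specific regenerative structure of the Mallows distribution, either through a quantitative analysis of cut events or through a careful comparison of two runs of the exposure process started at different scales.
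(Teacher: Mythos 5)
There is a genuine gap, and it sits exactly where you flagged the principal obstacle: the lower bound. Your mechanism for the localised regime rests on the claim that the cut events $\pi(\{1,\dots,k\})=\{1,\dots,k\}$ have density of order $(1-q)^2$ in $k$. This is false: that event is $\{\kappa_k=0\}$ for the arc chain, and its probability is comparable to the mass $\nu_0$ of the stationary distribution \eqref{eq:bounding_chain_stat_dist} at $0$, which is \emph{exponentially} small in $1/(1-q)$ (the ratios $u_i/v_i\approx (i(1-q))^{-2}$ are huge for $i\ll\xi$, so the normalising sum is $e^{c\xi}$). The scale $(1-q)^2$ is the per-step rate at which a \emph{single} open arc closes on itself (Lemma~\ref{lemma_U_arc_eq}), not the rate at which all open arcs vanish simultaneously; typical gaps between cut points are $e^{c/(1-q)}\gg (1-q)^{-2}$, so the cut points cannot pin $|\CC_s|$ at scale $(1-q)^{-2}$. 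Moreover, even a corrected version of this argument would only control \emph{where} the cycle closes, i.e.\ $\max(\CC_s)-\min(\CC_s)$, and non-concentration of the diameter does not imply non-concentration of the length $|\CC_s|$ (a cycle of large diameter can be short). In the macroscopic regime your appeal to Theorem~\ref{thm:pdl} is also not sound as stated: that theorem is a distributional limit under $(1-q_n)^2 n\to 0$ and does not cover the boundary case $(1-q)^{-2}\approx n$, nor does a weak limit by itself yield a variance lower bound uniform in $n$ without extra uniform-integrability input.

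The paper's lower bound avoids all of this with a short, robust non-concentration argument valid for all $q\ge\tfrac18$ simultaneously (the remaining range $q\le\tfrac18$ is handled, as you suggest, by a direct fixed-point computation supplying the factor $q$). One partitions $\S_n$ into classes $X=\{\sigma,\tau\sigma,\sigma\tau,\tau\sigma\tau\}$ with $\tau=(s,s+1)$; since an adjacent transposition changes $\inv$ by exactly $1$, all four elements have conditional probability $\approx 1$ given $X$. Each class contains some $\rho$ with $\CC_s(\rho)\ne\CC_{s+1}(\rho)$, and as $\sigma$ ranges over $X$ the value $|\CC_s(\sigma)|$ attains each of $|\CC_s(\rho)|$, $|\CC_{s+1}(\rho)|$ and their sum, whence $\var(|\CC_s|\mid X)\gtrsim\E[|\CC_s|^2\mid X]$ and $\var|\CC_s|\gtrsim\E[|\CC_s|]^2$, which Theorem~\ref{cycle_length} converts into the stated bound in both regimes at once. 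For the upper bound your identity $\E[(|\CC_s|-1)^2]=\sum_{t,u\ne s}\P[t,u\in\CC_s]$ is correct, but the two-stage exposure you sketch requires conditioning on $\{t\in\CC_s\}$, which is not adapted to the diagonal filtration; the paper instead bounds $(|\CC_s|-1)^2$ by $\diam(\CC_s)^2$ and uses the second-moment diameter bound of Proposition~\ref{diam_upper_prop_main}, which is both simpler and already carries the factor $q$.
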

The argument showing that the standard deviation is at least as
large as the expectation, when $q$ is bounded away from $0$, bears
something of a general nature and may be applicable to other spatial
permutation models such as the interchange model; see
Section~\ref{sec:spatial_random_permutations} and
Section~\ref{sec:discussion} for more details on these models.

Our next theorem considers the diameter of cycles, showing that the
cycles are dense in their support in the sense that their lengths
are comparable to their diameters on average.
\begin{theorem}[Expected Cycle Diameter]
    \label{cycle_diam_theorem}
    Let $n\ge 1,\, 0<q<1$ and $\pi\sim\mu_{n,q}$. Then, for all $1 \le s \le
    n$,
    \begin{equation} \label{diam_equation}
        \E[\max (\CC_s)-\min(\CC_s)]\approx \min\left\{\frac{q}{(1-q)^{2}}, n-1\right\}
    \end{equation}
and, moreover,
    \begin{align}
        \label{max_equation}    \E[\max (\CC_s)-s]&\approx \min\left\{\frac{q}{(1-q)^{2}}, n-s\right\}, \\
        \label{min_equation}    \E[s-\min (\CC_s)] &\approx \min\left\{\frac{q}{(1-q)^{2}}, s-1\right\} .
    \end{align}
\end{theorem}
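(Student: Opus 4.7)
\emph{Reduction.} Since $s\in\CC_s$, we have $\max(\CC_s)-\min(\CC_s)=(\max(\CC_s)-s)+(s-\min(\CC_s))$, so \eqref{diam_equation} follows by adding \eqref{max_equation} and \eqref{min_equation}. Moreover, $\mu_{n,q}$ is invariant under the reflection $\pi\mapsto\pi^R$, $\pi^R(i):=n+1-\pi(n+1-i)$, which preserves the number of inversions. Writing $\phi(i):=n+1-i$ one has $(\pi^R)^k=\phi\circ\pi^k\circ\phi$, so the cycle of $s$ in $\pi^R$ equals $\phi(\CC_{n+1-s}(\pi))$, and in particular $\max(\CC_s(\pi^R))=n+1-\min(\CC_{n+1-s}(\pi))$. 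Hence
\[
  \E[\max(\CC_s)-s]\;=\;\E[(n+1-s)-\min(\CC_{n+1-s})],
\]
so \eqref{max_equation} at $s$ is equivalent to \eqref{min_equation} at $n+1-s$. It therefore suffices to establish \eqref{max_equation}.

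\emph{Upper bound.} The estimate $\max(\CC_s)-s\le n-s$ is trivial. For the $q/(1-q)^2$ term I would use the tail expansion
\[
  \E[\max(\CC_s)-s]\;=\;\sum_{m\ge 1}\P\bigl(\max(\CC_s)\ge s+m\bigr)
\]
and note that the event $\{\max(\CC_s)\ge s+m\}$ forces $\CC_s$ to contain an arc $(j,\pi(j))$ with $j<s+m\le\pi(j)$. Arcs of large displacement are rare in the Mallows distribution (by \eqref{eq:displacements}, $\E|\pi_j-j|\lesssim 1/(1-q)$), and the probability that such an arc belongs specifically to the cycle $\CC_s$ should be controlled using the exposure/sampling arguments already underlying Theorem~\ref{cycle_length}. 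Summing the resulting tail estimates yields the $(1-q)^{-2}$ bound.

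\emph{Lower bound.} Since $\CC_s$ consists of $|\CC_s|$ distinct integers in $[\min(\CC_s),\max(\CC_s)]$, the deterministic inequality
\[
  \max(\CC_s)-\min(\CC_s)\;\ge\;|\CC_s|-1
\]
together with Theorem~\ref{cycle_length} gives $\E[\max(\CC_s)-s]+\E[s-\min(\CC_s)]\gtrsim\min\{(1-q)^{-2},n\}$. Combined with the trivial ceiling $s-\min(\CC_s)\le s-1$, this already yields \eqref{max_equation} whenever $s-1$ is much smaller than $\min\{(1-q)^{-2},n-s\}$. For $s$ well inside the bulk, I would invoke the reflection symmetry above together with an approximate local translation-invariance of $\mu_{n,q}$ in the interior to reduce to a point near which $\E[\max(\CC_s)-s]\approx\E[s-\min(\CC_s)]\approx\tfrac12\E[\diam(\CC_s)]$; equivalently, one can show directly via the sampling algorithm that with positive probability some iterate $\pi^k(s)$ lies at distance $\gtrsim(1-q)^{-2}$ to the right of $s$.

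\emph{Main obstacle.} The sharp upper bound $\lesssim(1-q)^{-2}$ is the most delicate step. A naive telescoping via arc displacements delivers only $(1-q)^{-3}$, because $|\CC_s|\approx(1-q)^{-2}$ while each arc has expected length $\approx 1/(1-q)$. Recovering the correct scale requires exploiting the cancellation between up- and down-arcs of a closed cycle — a random-walk-type phenomenon in which a walk of length $L$ with step size $\sigma$ reaches distance only $\sigma\sqrt{L}$. Making this cancellation precise is the main technical hurdle, and presumably proceeds via the diagonal exposure process advertised in the abstract.
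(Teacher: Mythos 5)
Your reduction (adding \eqref{max_equation} and \eqref{min_equation} to get \eqref{diam_equation}, and using the reversal symmetry to trade \eqref{max_equation} at $s$ for \eqref{min_equation} at $n+1-s$) is exactly what the paper does. Beyond that, however, both halves of your argument have genuine gaps. For the upper bound you candidly leave the key step open, and the ``cancellation between up- and down-arcs'' / random-walk heuristic you propose is not the mechanism the paper uses. The actual mechanism is the diagonal exposure: Lemma~\ref{lemma_U_arc_eq} shows that, given $\CF_t$, the events $\{\pi_{t+1}^{-1}=\head(\Fa)\}$ and $\{\pi_{t+1}=\tail(\Fa)\}$ for the open arc $\Fa=\arc_s^t$ are conditionally \emph{independent}, each of probability $q^{\textup{i}}\cdot\frac{1-q}{1-q^{n-t}}$; hence the arc containing $s$ closes at each step with probability at least $\bigl(\frac{q^{\kappa_t}-q^{\kappa_t+1}}{1-q^{n-t}}\bigr)^2\gtrsim (1-q)^2$ on the event that the arc chain $\kappa_t$ is $O(\xi)$, which holds with high probability by Theorem~\ref{mallows_chain_bounds}. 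The closure time therefore has exponential tails at scale $(1-q)^{-2}$ (Proposition~\ref{diam_upper_corr}), and no cancellation argument is needed. Your tail-sum starting point (an arc straddling level $s+m$ must exist) does not by itself control the probability that such an arc lies in $\CC_s$, which is the whole difficulty.

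The lower bound as you propose it also fails in two regimes. First, for $q$ bounded away from $1$ the target is $\E[\max(\CC_s)-s]\gtrsim q$, but your inequality $\E[\diam(\CC_s)]\ge \E|\CC_s|-1$ combined with Theorem~\ref{cycle_length} gives only $\E[\diam]\ge c\min\{(1-q)^{-2},n\}-1$, which is vacuous (indeed negative) when $\min\{(1-q)^{-2},n\}<1/c$; the paper instead gets this regime directly from Proposition~\ref{upper_bound_arc_s}, since $\P[\max(\CC_s)>s]=1-\P[s=\max(\CC_s)]\ge 1-\frac{1-q}{1-q^{n-s+1}}\gtrsim q$. Second, for $q$ close to $1$ your bound controls only the \emph{sum} $\E[\max(\CC_s)-s]+\E[s-\min(\CC_s)]$, and the splitting into the two one-sided estimates for bulk $s$ rests on an unproved ``approximate local translation-invariance'' of $\mu_{n,q}$ (the reflection symmetry relates $s$ to $n+1-s$, not to nearby points, so it only resolves $s\approx n/2$). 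The paper proves the one-sided bound directly: by \eqref{upper_bound_arc_t} the arc of $s$ closes at each step with probability at most $\bigl(\frac{1-q}{1-q^{n-t}}\bigr)^2$, so by Corollary~\ref{diam_lower_corr} and Proposition~\ref{cycle_diam_prop_low} it survives $\xi^2$ steps with probability $\gtrsim 1$, forcing $\max(\CC_s)\ge s+\xi^2$ with constant probability. Finally, note a circularity hazard: in the paper the lower bound of Theorem~\ref{cycle_length} is itself derived from this diameter machinery (Lemma~\ref{lm:al_proof_cycle_length} invokes Proposition~\ref{cycle_diam_prop_low}), so quoting Theorem~\ref{cycle_length} to prove the diameter lower bound inverts the actual logical order.
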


Given our previous theorems, naively, one may expect that a typical
random Mallows permutation $\pi$ has about $n / \min\{(1-q)^{-2},
n\}$ cycles, as the cycle containing a given point typically has
length of order $\min\{(1-q)^{-2}, n\}$. However, such reasoning is
known to be false even for a uniformly random permutation, in which
the cycle containing a given point typically has length of order
$n$, yet there are $\log n$ cycles on average. This phenomenon
reflects the fact that while most cycles are short, most points lie
in long cycles. Our last theorem clarifies that this is also the
case for random Mallows permutations and gives the order of
magnitude of the number of cycles.
\begin{theorem}[Expected Number of Cycles]
    \label{thm:number_of_cycles}
    Let $n\ge 1,\, 0<q<1$ and $\pi\sim\mu_{n,q}$. Then
    \[
        \E[\text{number of cycles in $\pi$}]\approx (1-q)\cdot n + \log(n+1).
    \]
\end{theorem}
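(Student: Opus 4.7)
My starting point is the identity $N(\pi) = \sum_{s=1}^n 1/|\CC_s(\pi)|$, which yields $\E[N] = \sum_s \E[1/|\CC_s|]$. The plan is to split this sum into contributions that separately produce the $(1-q)n$ and $\log(n+1)$ terms of the stated asymptotic.

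\textbf{Lower bound.} I would first lower bound the number of fixed points: from the Mallows sampling algorithm, the marginal law of $\pi_s$ is well approximated by a truncated two-sided geometric around $s$ with parameter $q$, which is consistent with \eqref{eq:displacements}. This gives $\Pr(\pi_s = s) \ge c(1-q)$ whenever $s$ is at distance $\ge 1/(1-q)$ from both endpoints, yielding $\E[\fix(\pi)] \ge c(1-q)n - O(1)$ and hence $\E[N] \ge c(1-q)n - O(1)$. For the $\log(n+1)$ lower bound the relevant regime is $(1-q)^2 n \to 0$, in which Theorem~\ref{thm:pdl} asserts that $|\CC_s|/n$ converges in distribution to $\mathrm{Unif}(0,1)$. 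With a modicum of uniformity on the cycle-length distribution, this converts to $\E[1/|\CC_s|] \ge c(\log n)/n$, summing to $\E[N] \ge c\log(n+1)$; in the complementary regime $(1-q)n$ already dominates $\log(n+1)$.

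\textbf{Upper bound.} Summation by parts gives
\[
\E[1/|\CC_s|] = \sum_{k \ge 1} \frac{\Pr(|\CC_s| \le k)}{k(k+1)},
\]
so I would seek a diffusive-type lower-tail estimate
\[
\Pr(|\CC_s| \le k) \le C\bigl((1-q)\sqrt{k} + k/n\bigr).
\]
The heuristic is that iterating $\pi$ from $s$ traces a random walk with step size of order $1/(1-q)$, so by a local central limit theorem the probability of a first return within $k$ steps is of order $(1-q)\sqrt{k}$; the additive $k/n$ term covers the near-uniform regime. Substituting this estimate yields $\E[1/|\CC_s|] \le C\bigl((1-q) + \log(n+1)/n\bigr)$, and summing over $s$ delivers $\E[N] \le C\bigl((1-q)n + \log(n+1)\bigr)$.

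\textbf{Main obstacle.} The heart of the argument is the diffusive tail estimate on $|\CC_s|$. This is a full left-tail bound rather than just a moment estimate, and it is precisely here that the diagonal exposure process from the paper's toolkit plays its central role: it exposes the graph of $\pi$ in an order so that each new vertex of $\CC_s$ is a conditionally well-behaved random increment, making the random-walk analogy rigorous. Handling the boundary effects (the walk is reflected at $1$ and $n$) and dovetailing the Mallows and near-uniform regimes into a single estimate are where most of the technical work lies.
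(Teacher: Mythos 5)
Your starting identity $N(\pi)=\sum_s 1/|\CC_s|$ is correct, but it routes the whole theorem through the lower tail of the cycle-length distribution, and that is exactly what neither the paper nor your sketch actually controls. The paper instead writes $N(\pi)=\sum_s \1\{s=\max(\CC_s)\}$; since, given $\pi_1,\ldots,\pi_{s-1}$, the event $\{s=\max(\CC_s)\}$ is the event that $\pi_s$ hits one specific admissible value, the sampling formula \eqref{eq:sampling_formula} pins its conditional probability between $q^{\kappa_{s-1}}\tfrac{1-q}{1-q^{n-s+1}}$ and $\tfrac{1-q}{1-q^{n-s+1}}$ (Proposition~\ref{upper_bound_arc_s}), and $\E[q^{\kappa_{s-1}}]\approx 1$ finishes the proof with no tail estimates at all. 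Your decomposition is genuinely different, and the fixed-point lower bound for the $(1-q)n$ term is fine, but the remaining two ingredients have real gaps.

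First, the upper bound rests entirely on $\P(|\CC_s|\le k)\le C((1-q)\sqrt{k}+k/n)$, which you leave as a heuristic. Nothing in the paper's toolkit yields an upper bound on the probability that a cycle is short \emph{in cardinality}: the diagonal exposure / arc-chain machinery controls whether $\arc_s^t$ is still open, i.e.\ the span $\max(\CC_s)-\min(\CC_s)$, and a cycle of small cardinality can have large span, so those bounds do not transfer (they bound $\P(|\CC_s|\le k)$ from \emph{below} via $|\CC_s|\le\diam(\CC_s)+1$, the wrong direction). The exponent also matters: the crude bound $\P(|\CC_s|\le k)\lesssim(1-q)k$ that a union bound over return times might give produces $\E[1/|\CC_s|]\lesssim(1-q)\log\tfrac{1}{1-q}$, losing a logarithmic factor; you need some $k^{\alpha}$ with $\alpha<1$, i.e.\ a genuine local-CLT return-probability estimate for the non-Markovian orbit $s,\pi_s,\pi^2_s,\ldots$. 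The paper's discussion section explicitly records that even the limiting law of $(1-q)^2|\CC_s|$ is unknown, so this is not a routine technical step. Second, for the $\log(n+1)$ lower bound you invoke Theorem~\ref{thm:pdl}, but weak convergence of $|\CC_s|/n$ to $U[0,1]$ only gives $\P(|\CC_s|\le\eps n)\to\eps$ at macroscopic scales; to get $\E[1/|\CC_s|]\gtrsim(\log n)/n$ from $\sum_k\P(|\CC_s|\le k)/(k(k+1))$ you need $\P(|\CC_s|\le k)\gtrsim k/n$ down to scales $k=n^{o(1)}$, which is a quantitative local statement that the distributional limit does not supply (restricting to $k\ge\eps n$ yields only $O(1/n)$ per point). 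Both gaps disappear if you switch to the paper's decomposition.
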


\subsection{Sampling Algorithm}\label{sec:initial_sampling_algorithm}

Our results are based on an exact sampling algorithm for the Mallows
distribution which goes back to the original work of Mallows
\cite{M57}. The algorithm allows us to sample a permutation $\pi\sim
\mu_{n,q}$ sequentially as follows: Given $\pi_1, \ldots \pi_{s-1}$,
the distribution of $\pi_{s}$ is distributed on the remaining
$n-s+1$ values in a geometric progression. Precisely, if the
remaining values are $j_1<j_2<\cdots<j_{n-s+1}$ then
\begin{equation}\label{eq:sampling_formula}
  \P[\pi_{s} = j_k\mid \pi_1, \ldots, \pi_{s-1}] =
  \frac{1-q}{1-q^{n-s+1}} \cdot q^{k-1}.
\end{equation}
It is simple to verify the validity of this formula by noting that,
given $\pi_1, \ldots, \pi_{s-1}$, the assignment $\pi_s = j_k$
creates exactly $k-1$ inversions between $\pi_s$ and $\pi_{s+1},
\ldots, \pi_{n}$; precisely, if $\pi_s=j_k$ then necessarily
$|\{t\mid t>s,\, \pi_t < \pi_s\}| = k-1$.

In our proofs we develop more flexible versions of the above
formula, allowing us to sample portions of the cycles of the
permutation iteratively and control the evolution of these portions,
see Section~\ref{sec:diagonal_exposure} and the beginning of
Section~\ref{section_mt}.

There exist extensions of the Mallows distribution and the above
formula (for $q<1$) to infinite permutations; one-to-one and onto
functions $\pi\colon\N\to\N$ or $\pi\colon\Z\to\Z$. The extension to
the case of $\N$ is straightforward, one simply takes the limit
$n\to\infty$ in \eqref{eq:sampling_formula} to obtain a geometric
distribution, see Gnedin and Olshanski \cite{GO10}. The extension to
a two-sided infinite permutation, when the index set is $\Z$, is
more complicated due to the fact that there is no natural initial
position to start the sampling process from.

Generating methods for the two-sided infinite case were developed in
\cite{GO12}. In one of these methods, one samples two one-sided
infinite Mallows permutations and uses a `stitching' mechanism to
merge these into a two-sided infinite permutation. We also present a
method for sampling a Mallows permutation `from an interior point',
see Section~\ref{ggm}. The method is presented for finite $n$ and
may be used also for the two-sided infinite case via an
approximation theorem from \cite[Section 7.2]{GO12}. This method may
serve as a bridge to transfer results from the finite $n$ case to
the two-sided infinite case.

\subsection{Relation with other models}
In this section we briefly describe other models for which related
results have been obtained or are conjectured.

\subsubsection{Permutons} The regime of parameters in which $n\cdot
(1-q)\to\beta$ is also of special interest as in this case there is
a limiting density to the empirical measure of the points in the
graph of a Mallows permutation. Starr~\cite{S09} obtained an
explicit formula for the limiting density as a function of $\beta$.
In modern terminology, the limiting density is called a
\emph{permuton}. Recently, Mukherjee \cite{M15} proved Poisson limit
theorems for the lengths of \emph{short} cycles for models
converging to permutons, including the Mallows model as a special
case. See also Kenyon, Kr\'al', Radin and Winkler \cite{KKRW15} for
relations with permutons with fixed pattern densities.

\subsubsection{Band Matrices}
In the study of random matrices, models of matrices with a band
structure are of interest. We elaborate on one representative model:
Let $A$ be an $n\times n$ random matrix in which, for a given band
width $0 < W \le n$, the entries $A_{i,j}$, $|i - j|<W$, are
independent and identically distributed standard Gaussian random
variables and the other entries are set to zero. Define the
symmetric band matrix $H$ by
\begin{equation*}
  H:=\frac{A + A^t}{\sqrt{2}}.
\end{equation*}
The main focus in these studies is on the eigenvalues and
eigenvectors of $H$.

In one extreme case $W=1$, meaning that the matrix $H$ is diagonal,
the eigenvectors are the standard basis vectors. The other extreme
case, when $W=n$, results in the Gaussian Orthogonal Ensemble (GOE)
distribution (up to scaling). In this case the distribution of $H$
is invariant under conjugation by orthogonal matrices, implying that
the eigenvectors of $H$ form a uniformly distributed orthonormal
basis.

It is conjectured that random matrices of this kind undergo a
localization / delocalization transition as the band width $W$
increases beyond the threshold $\sqrt{n}$. More precisely, one
expects that when $W \ll \sqrt{n}$, the eigenvectors are localized
in the sense that most of their $\ell^2$ mass lies on a set whose
size is $o(n)$ (possibly even in an interval of such size), whereas
if $W \gg \sqrt{n}$ the eigenvectors have their $\ell^2$ mass
approximately uniformly spread. Furthermore, in the second regime,
it is expected that the local eigenvalue statistics have the same
limit as in the GOE case as $n$ tends to infinity. Informally, we
may say that the local eigenvalue statistics should have the
mean-field limit in the delocalized regime. See the survey of
Spencer \cite{S11} and references within for more on these topics.

Our results prove an analogous transition for the Mallows
distribution. One may consider the permutation matrix $H_\pi$
associated with a random permutation $\pi\sim\mu_{n,q}$. By
\eqref{eq:displacements}, this matrix has an approximate band
structure in the sense that few of its non-zero entries $(H_\pi)_{s,
\pi_s}$ have $|\pi_s-s|$ greater than a constant multiple of the
band width $W = \min\{\frac{1}{1-q}, n\}$ (in fact, the probability
that $|\pi_s - s|\ge tW$ decays exponentially in $t$, see
\cite[Theorem 1.1]{BP13} and Figure~\ref{fig:mallows_sample}). Such
a matrix is orthogonal, having its eigenvalues on the unit circle.
The eigenvalues and eigenvectors of $H_\pi$ are determined by the
cycle structure of $\pi$: associated with each cycle of length
$\ell$, one has the $\ell$ eigenvalues $\exp\left(\frac{2\pi i
j}{\ell}\right)$, $0\le j\le \ell-1$, and correspondingly $\ell$
eigenvectors, supported on the coordinates of the cycle and giving
equal mass to all points of it. Thus, a localization /
delocalization transition corresponds to the emergence of cycles
whose length is of order $n$. Theorem~\ref{cycle_length} shows that
such a transition occurs as the band width increases beyond
$\sqrt{n}$, paralleling the conjecture for random band matrices.
Moreover, Theorem~\ref{thm:pdl} shows that in the delocalized
regime, the statistics of long cycles approach the Poisson-Dirichlet
distribution, the limiting statistics for uniform random
permutations, in analogy with the above prediction for the local
eigenvalue statistics.

The reader is also referred to the survey of Olshanski \cite{O11}
for other analogies between random permutations and random matrices,
discussing, in particular, analogies between random permutations
distributed according to the Ewens measure (see also
Section~\ref{sec:discussion}) and deformations of Dyson's circular
ensemble of random matrices.

\subsubsection{Card Shuffling}\label{sec:card_shuffling}
There are many natural dynamics on permutations for which the
uniform distribution is stationary. Diaconis and Shahshahani
\cite{DS81} consider the following natural card shuffling scheme:
Start with a deck of $n$ cards. At each step choose two cards
uniformly and independently and exchange their positions in the
deck. How many steps does one need to perform in order for the deck
to become almost perfectly shuffled? In a beautiful application of
representation theory to the study of Markov chains, it is proved in
\cite{DS81} that the state of the deck after $\frac{1}{2}n\log n + c
n$ such steps is close to uniform (in the total variation distance)
when $c$ is a large positive constant, and is far from uniform when
$c$ is a large negative constant. The latter bound follows from the
analysis of the coupon collector problem: when $c$ is a large
negative constant there will be many cards in the deck which have
not moved from their initial position, creating a permutation with
many fixed points. Thus, the result of \cite{DS81} may be
interpreted as saying that the number of short cycles is the main
obstacle for a permutation to become approximately uniform in this
card shuffling scheme.

Schramm \cite{S05} considered the above card shuffling scheme
further, investigating the state of the deck after $tn$ steps are
performed. The analysis in \cite{S05} proceeds by drawing an
associated graph on the vertex set $\{1,\ldots, n\}$, in which an
edge is put between $i$ and $j$ if the cards at positions $i$ and
$j$ in the deck have been exchanged. This associated graph is
distributed as an Erd\H os-R\'enyi random graph, allowing one to
deduce from the standard literature that when $t \le \frac{1}{2}$,
all cycles in the random permutation have size $o(n)$. Schramm's
work focuses on the case that $t>\frac{1}{2}$ and proves that
macroscopic cycles emerge in this regime (see also Berestycki
\cite{B11} for a later simpler argument). Moreover, confirming a
conjecture of Aldous, it is proved that the limiting joint
distribution of these macroscopic cycles obeys the same
Poisson-Dirichlet law observed for uniform permutations. Thus,
although it takes about $\frac{1}{2}n\log n$ steps for the full
permutation to become approximately uniform, it takes far fewer
steps for macroscopic cycles to start emerging and the joint
distribution of these macroscopic cycles converges very quickly to
the limiting joint distribution. A similar fact is true for the
Mallows model by our results: when $q$ increases beyond the
threshold $1 - \frac{1}{\sqrt{n}}$, although the Mallows permutation
is still far from uniform (distinguished by its displacements, say,
as in \eqref{eq:displacements}), macroscopic cycles begin to emerge
and their joint distribution converges to the Poisson-Dirichlet law.

In this context we mention that the Mallows permutation also arises
via a shuffling algorithm. As proved by Benjamini, Berger, Hoffman
and Mossel \cite{BBHM05}, it arises as the stationary distribution
of a biased card-shuffling algorithm. In this algorithm, one starts
with a deck of cards numbered $\{1,\ldots, n\}$ and at each
iteration picks uniformly a pair of adjacent cards in the deck. One
flips a coin with probability $p = \frac{1}{1+q}$ for heads and
rearranges the two cards according to the coin result, in increasing
order if heads and in decreasing order if tails. The iterations are
done independently of one another.

\subsubsection{Spatial Random
Permutations}\label{sec:spatial_random_permutations} A \emph{spatial
random permutation} is a random permutation which is biased towards
the identity in some underlying geometry. This broad idea covers
many models, among them the Mallows distribution which is biased
towards the identity in a one-dimensional geometry. In this section
we briefly describe two other models in this class for which related
results have been proved.

Let $G = (V,E)$ be a finite or infinite bounded-degree graph. The
\emph{interchange process} (also called the \emph{stirring process}
in some of the literature) gives a dynamics on permutations in
$\S_V$, one-to-one and onto functions $\pi\colon V\to V$, which is
associated to the structure of the graph. Each edge of the graph is
endowed with an independent Poisson process of rate $1$. An edge is
said to \emph{ring} at time $t$ if an event of its Poisson process
occurs at that time. Starting from the identity permutation
$\pi^0\in \S_V$, the interchange process, introduced by T\'oth
\cite{T93}, is the permutation-valued stochastic process $(\pi^t)$
obtained by performing a transposition along each edge at each time
that it rings.

The interchange process on the complete graph coincides with a
continuous time version of the Diaconis-Shashahani card shuffling
algorithm discussed in the Section~\ref{sec:card_shuffling}. Special
attention has been given to the case that the graph $G = \Z^d$,
where the interchange process is related to the magnetization of the
quantum Heisenberg ferromagnet \cite{T93}. In particular, the
following conjecture of B\'alint T\'oth has attracted significant
attention but remains unresolved: When $d=2$, for any $t>0$, all
cycles of $\pi^t$ are finite almost surely. In contrast, when $d\ge
3$ and $t$ is sufficiently large, $\pi^t$ has an infinite cycle
almost surely.

Besides the case of the complete graph, results on the existence of
long cycles in the interchange process are currently available only
for trees, by Angel \cite{A03} and Hammond \cite{H13, H15}, and for
the hypercube graph, by Koteck\'y, Mi\l o\'s and Ueltschi
\cite{KMU15}.

\pgfplotsset{width=\textwidth}
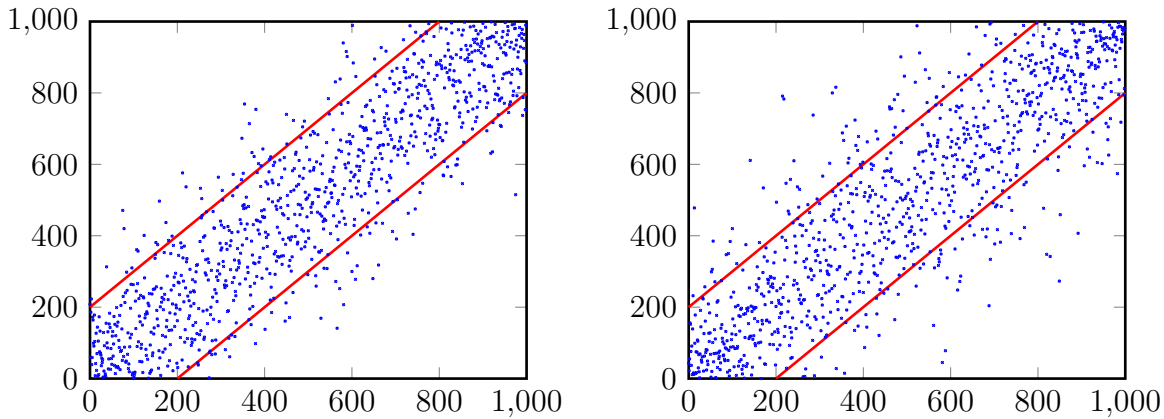
\begin{figure}
\begin{subfigure}[t]{0.46\textwidth}
\begin{tikzpicture}
\begin{axis}[
xmin = 0,
xmax = 1000,
ymin = 0,
ymax = 1000,
line width = 1pt,
]
\addplot[
red,
domain=0:800,
samples=801,
line width = 1pt
]
{x+200)};
\addplot[
red,
domain=200:1000,
samples=801,
line width = 1pt
]
{x-200)};
\addplot+[blue, only marks, mark size = 0.1pt] table {interchangeSample.dat};
\end{axis}
\end{tikzpicture}
\caption{
    Graph of a sample of the interchange process in a one-dimensional geometry with $n=1000$ and $t=10000$.
}
\end{subfigure}
\quad
\begin{subfigure}[t]{0.46\textwidth}
\begin{tikzpicture}
\begin{axis}[
xmin = 0,
xmax = 1000,
ymin = 0,
ymax = 1000,
line width = 1pt,
]
\addplot[
red,
domain=0:800,
samples=801,
line width = 1pt
]
{x+200)};
\addplot[
red,
domain=200:1000,
samples=801,
line width = 1pt
]
{x-200)};
\addplot+[blue, only marks, mark size = 0.1pt] table {mallowsSample1000n99q.dat};
\end{axis}
\end{tikzpicture}
\caption{
    Graph of a sample of the Mallows distribution $\mu_{n,q}$ with $n=1000$ and $q=0.99$.
}
\end{subfigure}
\caption{
    Comparison between the Mallows distribution and the interchange model.
}
\label{fig:interchange_vs_mallows}
\end{figure}

Recently, a quantitative study of the interchange process in a
one-dimensional geometry, $V = \{1,\ldots, n\}$ with $i$ adjacent to
$i+1$, was performed by Kozma and Sidoravicius. Here, as each
$(\pi^t_s)_{t>0}$ is a simple random walk, the typical displacement
$|\pi^t_s - s|$ is of order $\min\{\sqrt{t}, n\}$. Thus, the graph
of $\pi^t$ has a band structure similar to the graph of a Mallows
permutation, see Figure~\ref{fig:interchange_vs_mallows}, and the
two models seem graphically similar when one takes
\begin{equation}\label{eq:q_and_t}
  q = 1 - \frac{1}{1+\sqrt{t}}.
\end{equation}
In a work in preparation, Kozma and Sidoravicius prove that the
expected length of the cycle containing a given point in $\pi^t$ has
order $\min\{t+1, n\}$. This result, whose mathematical details were
completed before our work began, is analogous to our
Theorem~\ref{cycle_length} when making the assignment
\eqref{eq:q_and_t}.

A second model of spatial random permutations, related to the
Feynman-Kac representation of the ideal Bose gas in quantum
statistical mechanics, has also received significant attention, see
\cite{BU11} and references within. In this model, one samples a
random collection of points $(x_1,\ldots, x_n)$ in a finite box
$\Lambda\subset\R^d$ and a random permutation $\pi$ on these points.
The distribution is such that permutations with large displacements
$\pi(x_i) - x_i$ have lower density. In the physical context, the
emergence of macroscopic cycles in the model is related to the
phenomenon of Bose-Einstein condensation. In recent work, Betz and
Ueltschi \cite{BU11} (see also S\"ut\H o \cite{S02}) have shown that
the following phase transition takes place in the model when $d\ge
3$: Define the density of points per unit area $\rho =
\frac{n}{|\Lambda|}$. There exists a critical density $\rho_c$ such
that, with probability tending to $1$ as $n$ tends to infinity, if
the density is fixed to a value $\rho < \rho_c$ then all cycles have
length $o(n)$, whereas if it is fixed to a value $\rho > \rho_c$
then macroscopic cycles, of size proportional to $n$, emerge.
Moreover, in the second regime, the distribution of suitably
normalized cycle lengths converges in distribution to the
Poisson-Dirichlet law.

\subsection{Reader's Guide}
\hspace*{\parindent}Section~\ref{sec:prem} introduces notation and
preliminary facts used throughout the paper.

Section~\ref{section_gpac} develops flexible sampling methods for
the Mallows distribution and studies closely related random
processes: In Section~\ref{sec:generating_the_graph} we develop
tools for sampling the graph of a random Mallows permutation
sequentially. These are used in Section~\ref{sec:diagonal_exposure}
to introduce a `diagonal' exposure procedure for the graph. There,
we also define the `arc chain' of a permutation, which tracks the
number of open arcs (incomplete cycles) throughout the diagonal
exposure process, and analyze its basic properties. Concentration
bounds for the arc chain process are developed in
Section~\ref{sec:arc_chain_distribution} and used significantly in
later proofs. Section~\ref{section_pmc} provides bounds on the time
it takes an arc chain to reach zero, of use in the regime where $q$
is bounded away from $1$. Analogous bounds on return times also
appeared in the recent work \cite{BB16} of Basu and Bhatnagar where
a related Markov chain is introduced. Section~\ref{ggm} considers
the distribution of rectangular subsets of the graph of a Mallows
permutation. These provide the starting point for a method to sample
a Mallows permutation `from a mid-point', which is further extended
to a sampling method for the two-sided infinite case, when
$\pi:\Z\to\Z$.

Our main theorems are proved in Section~\ref{section_mt}:
Section~\ref{sec:num_of_cycles} is dedicated to the proof of
Theorem~\ref{thm:number_of_cycles}, regarding the number of cycles.
In Section~\ref{sec:cycle_diam} we prove
Theorem~\ref{cycle_diam_theorem} on the diameter of cycles by
providing deviation bounds for the distribution of the maximal and
minimal element of the cycle containing a given point. In
Section~\ref{sec:cycle_length} we prove Theorem~\ref{cycle_length}
regarding the length of cycles. Section~\ref{sec:variance_bounds} is
dedicated to the proof of Theorem~\ref{cycle_var} which provides
bounds on the variance of the cycle lengths. Theorem~\ref{thm:pdl}
on the Poisson-Dirichlet law is proved in Section~\ref{sec:pdl}.

We conclude in Section~\ref{sec:discussion} with a discussion and a
selection of open questions.
\section{Notation and Preliminaries}\label{sec:prem}
 \noindent $\bullet$ Throughout the rest of the paper $n$ is a positive integer whilst $q\in(0,1)$ is a real parameter.

\smallskip
  \noindent $\bullet$ For two quantities $x,y\geq 0$, which may depend on other parameters such
as $n$ or $q$, we write $x \lesssim y$ if there exists an absolute
constant $c>0$ such that $x \leq c\cdot y$. Note that $x\approx y$
is equivalent to $x\lesssim y$ and $y\lesssim x$.

\smallskip
  \noindent $\bullet$ $\N$ is the set of positive integers while $[n]:=\{i\in \N \mid i \leq n\}=\{1,2,\ldots,n\}$.

\smallskip
   \noindent $\bullet$ $\1_A$ and $\1\{A\}$ denote the indicator random variable of an event $A$.

\smallskip
\noindent $\bullet$ Throughout the paper we denote by $\xi=\xi_q$ the following
\begin{equation}\label{eq:xi_def}
    \xi:=\min\{i\in\N \mid q^i\leq \tfrac{1}{2}\}= \left\lceil  \log_q\tfrac{1}{2} \right\rceil \approx \frac{1}{1-q}.
\end{equation}

\noindent $\bullet$ In order to avoid cumbersome expressions we will use an abbreviated
notation when referring to subsets of $\Z^2$. We write, for instance
\[\{x<a,\,y<b\} \quad \text{instead of} \quad\{(x,y)\in \Z^2\mid
x<a,\,y<b\}\] and analogous expressions involving other subsets of
$\Z^2$.

\smallskip
 \noindent $\bullet$ We introduce two useful symmetries of the Mallows distribution
$\mu_{n,q}$, i.e., bijections $\S_n\leftrightarrow \S_n$ that
preserve $\mu_{n,q}$. The inverse symmetry is induced by the inversion map
\begin{equation}\label{inv_symmetry}
   \mu_{n,q}[\pi]=\mu_{n,q}[\pi^{-1}].
\end{equation}
The reversal symetry is defined via the reversal map $r\colon
s\mapsto n+1-s$ by
\begin{equation}\label{rev_symmetry}
    \mu_{n,q}[\pi]=\mu_{n,q}[r\circ \pi \circ r],
\end{equation}
where we note that $\pi_i=j$ if and only if $(r\circ \pi \circ
r)(n+1-i)=n+1-j$. The fact that the two maps $\pi\mapsto \pi^{-1}$
and $\pi\mapsto r\circ \pi \circ r$ preserve the Mallows
distribution follows simply by checking that they preserve the
number of inversions. These two symmetries will prove useful as they
also preserve the cycle structure. Specifically, if $\CC$ is a cycle
of $\pi$ then $\CC^{-1}$ and $r\circ\CC\circ r$ are cycles of
$\pi^{-1}$ and $r\circ\pi \circ r$, respectively.

\begin{section}{The Sampling Algorithm and the Arc Chain}
\label{section_gpac} In this section we present a sampling algorithm
for the Mallows distribution which will be fundamental in our
analysis. We further identify a Markov chain associated to this
sampling algorithm, termed the arc chain, and explore its basic
properties.

\subsection{Generating the Graph of a Mallows
Permutation}\label{sec:generating_the_graph}

In Section~\ref{sec:initial_sampling_algorithm} a method is
presented for sampling the values $(\pi_s)$ of a Mallows permutation
iteratively. Here we explain a related method which generates the
graph of the permutation
\begin{equation*}
  \Gamma_\pi:=\{(s,\pi_s)\mid s\ge 1\}
\end{equation*}
in an iterative manner, allowing to expose portions of the graph in
various orders.

Although our focus is on finite permutations, for clarity, we start
by discussing the case of infinite one-sided permutations
$\pi:\N\to\N$ in which the construction is simpler. In this case, as
explained in Section~\ref{sec:initial_sampling_algorithm},
\begin{equation*}
  \P[\pi_s = t\mid \pi_1, \ldots, \pi_{s-1}] = (1-q)\cdot
  q^{|[t]\smallsetminus\{\pi_1,\ldots,\pi_{s-1}\}|-1}\quad\text{for
  }\
  t\notin\{\pi_1,\ldots, \pi_{s-1}\}.
\end{equation*}
In other words, the value of $\pi_s$, conditioned on the values of
$\pi_1,\ldots, \pi_{s-1}$, has the geometric distribution with
success probability $1-q$ on the values in
$\N\smallsetminus\{\pi_1,\ldots, \pi_{s-1}\}$. This gives rise to
the following sampling method: starting with a two-dimensional
infinite array $(a_{s,t})_{s,t\ge 1}$ of independent Bernoulli
random variables, each satisfying
\begin{equation*}
\P[a_{s,t}=1] = 1 - \P[a_{s,t} = 0] = 1-q,
\end{equation*}
we may generate the permutation $\pi$ by setting
\begin{equation*}
  \pi_s := \min\{t\ge 1\mid t\notin\{\pi_1, \ldots, \pi_{s-1}\},\,
  a_{s,t} = 1\}.
\end{equation*}
Examination of this formula shows that the rule for deciding whether
the point $(s,t)$ belongs to the graph $\Gamma_\pi$ depends only on
the value of the bit $a_{s,t}$ and the portions of the graph
$\Gamma_{\pi}$ which lie strictly below $(s,t)$ or strictly to the
left of $(s,t)$,
\begin{equation}\label{eq:Gamma_pi_left_and_below}
  \Gamma_\pi\cap \{x<s,\, y=t\}\quad\text{and}\quad\Gamma_\pi\cap \{x=s,\,
  y<t\}.
\end{equation}
Precisely, $(s,t)\in \Gamma_{\pi}$ if and only if $a_{s,t} = 1$ and
the two sets in \eqref{eq:Gamma_pi_left_and_below} are empty. This
viewpoint allows for iterative generation of the graph
$\Gamma_{\pi}$ in many different manners. In the sequel we shall
focus on diagonal generation, in which we expose the portion of the
graph intersecting the square $\{x<t,\, y<t\}$ for increasing values
of $t$.

Our next lemma gives an analogous generating method for the graph of
a finite Mallows permutation, $\pi\in \S_n$, showing that many of
the essential features of the above construction are preserved.

\begin{lemma}\label{atomic_exposure}
    Let $\pi\sim\mu_{n,q}$, let $s,t \in [n]$ and set $U:=\{x<s \text{ or }
    y<t\}$. Then
    \begin{equation} \label{atomic_exposure_eq}
        \P[\pi_s=t \mid \Gamma_\pi\cap U]= \frac{1-q}{1-q^{|\Gamma_\pi \cap U^\complement|}} \cdot
    \1 \left\{ \begin{array}{rl} \Gamma_\pi\cap \{x<s,\,y=t\}=\emptyset \\  \Gamma_\pi\cap\{x=s,\,y<t\}=\emptyset \end{array} \right\}.
    \end{equation}
\end{lemma}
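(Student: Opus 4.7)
The plan is to combine the sampling formula \eqref{eq:sampling_formula} with the inverse symmetry \eqref{inv_symmetry} to perform a two-stage iterative sampling of $\pi$: first expose the columns $1, \ldots, s-1$, and then expose the inverse of the resulting sub-permutation in order of increasing row index. Set $A := \{\pi_1, \ldots, \pi_{s-1}\}$ and enumerate $A^\complement = \{j_1 < j_2 < \cdots < j_{n-s+1}\}$. A recursive application of \eqref{eq:sampling_formula} shows that, conditionally on $\pi_1, \ldots, \pi_{s-1}$, the sub-permutation $\sigma := \pi|_{\{s,\ldots,n\}} : \{s, \ldots, n\} \to A^\complement$ is Mallows-distributed after relabeling the domain and codomain by rank, i.e., corresponds to a draw from $\mu_{n-s+1, q}$. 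By \eqref{inv_symmetry}, its inverse is identically distributed, so we may equivalently generate $\sigma$ by drawing $\sigma^{-1}(j_1), \sigma^{-1}(j_2), \ldots$ in order, each from the appropriate truncated geometric distribution on the still-available columns.

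Let $c := |A^\complement \cap \{y < t\}|$, so that $A^\complement \cap \{y < t\} = \{j_1, \ldots, j_c\}$. Conditioning on $\Gamma_\pi \cap U$ is equivalent to fixing $\pi_1, \ldots, \pi_{s-1}$ together with the values $\sigma^{-1}(j_1), \ldots, \sigma^{-1}(j_c)$, which describe the matching of columns in $\{s, \ldots, n\}$ to rows $y < t$. I would next dispatch the indicator: if $t \in A$, the indicator $\Gamma_\pi \cap \{x<s,\,y=t\}=\emptyset$ fails and $\pi_s = t$ is deterministically impossible; if some $\sigma^{-1}(j_k) = s$ with $j_k < t$, then the indicator $\Gamma_\pi \cap \{x=s,\,y<t\}=\emptyset$ fails, and again $\pi_s \le j_k < t$. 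In both cases \eqref{atomic_exposure_eq} asserts $0 = 0$.

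In the remaining case both indicators hold, so $t = j_{c+1}$ is the next value awaiting its sample, and the set of still-available columns is $D := \{s, \ldots, n\} \setminus \{\sigma^{-1}(j_1), \ldots, \sigma^{-1}(j_c)\}$, which contains $s$. Since every element of $D$ is at least $s$, the column $s$ has rank $1$ in $D$, and the sampling formula applied to $\sigma^{-1}(j_{c+1})$ gives $\P[\sigma^{-1}(t) = s \mid \Gamma_\pi \cap U] = (1-q)/(1 - q^{|D|})$. Finally, $U^\complement = \{x \ge s,\, y \ge t\}$ and $\Gamma_\pi \cap U^\complement$ has exactly one point in each column of $D$, so $|D| = |\Gamma_\pi \cap U^\complement|$, yielding \eqref{atomic_exposure_eq}.

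The only non-routine step is the first one, namely verifying that the conditional distribution of $\sigma$ truly is Mallows on the ordered sets $\{s, \ldots, n\}$ and $A^\complement$; this amounts to checking that the number of inversions of $\pi$, minus the portion determined by $\pi_1, \ldots, \pi_{s-1}$, equals the number of inversions of $\sigma$ under the order-preserving relabeling, a direct computation from the definition of $\inv$ and the recursive structure of \eqref{eq:sampling_formula}. Once this is in hand, everything else is bookkeeping.
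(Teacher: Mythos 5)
Your proof is correct, but it follows a genuinely different route from the paper's. The paper first establishes the formula conditioned only on $\Gamma_\pi\cap U\cap\{x\le s\}$, using the sampling formula \eqref{eq:sampling_formula} in the conditional form $\P[\pi_s=j_k\mid\pi_1,\ldots,\pi_{s-1},\pi_s\ge j_k]$, and then upgrades the conditioning to all of $\Gamma_\pi\cap U$ via the conditional independence statement \eqref{eq:conditional_independence_in_graph}, itself derived by iterating the partial formula. You instead absorb the troublesome part of the conditioning, $\Gamma_\pi\cap\{x\ge s,\,y<t\}$, into the natural filtration of a second sampling pass: you use the fact that, given $\pi_1,\ldots,\pi_{s-1}$, the relative order of the remaining sub-permutation is again Mallows, apply the inverse symmetry \eqref{inv_symmetry} to sample it row by row, and observe that $\Gamma_\pi\cap U$ is exactly the history of this two-stage process after $c$ rows, so one further application of \eqref{eq:sampling_formula} (with $s$ being the minimal, hence rank-one, available column) gives the answer. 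The trade-off is that your argument leans on the ``restriction to a suffix of columns is Mallows'' fact, which you correctly flag as the one step needing verification; it is the vertical-rectangle special case of Lemma~\ref{lem:ggm_subgraph} (also \cite[Corollary 2.7]{BP13}) and is independent of the present lemma, so there is no circularity. What your route buys is that it sidesteps the somewhat delicate iterative conditional-independence argument entirely; what the paper's route buys is that the conditional independence \eqref{eq:conditional_independence_in_graph} is itself a tool reused throughout the rest of the paper, so it is established once and for all alongside the lemma. All of your bookkeeping (the dispatch of the two indicators, the identification $t=j_{c+1}$, and the count $|D|=|\Gamma_\pi\cap U^\complement|$) checks out.
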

We point out that the right-hand side of \eqref{atomic_exposure_eq}
does not depend on the full information in $\Gamma_{\pi}\cap U$.
Indeed, to evaluate the right-hand side one only needs to know
whether the sets in \eqref{eq:Gamma_pi_left_and_below} are empty and
the size of the set $|\Gamma_\pi \cap U^\complement|$, which may be
computed, for instance, via
\begin{equation}\label{eq:number_of_points_in_corner}
    |\Gamma_\pi\cap U^\complement| = n - s - t + 2 + |\Gamma_\pi\cap \{x<s, \, y<t\}|.
\end{equation}
As one application, one may use the equality
\eqref{atomic_exposure_eq} iteratively to compute the probability
distribution of the portion of the graph $\Gamma_\pi\cap\{x\ge
i,\,y<j\}$ conditioned on the portion of the graph
$\Gamma_\pi\cap\{x< i,\,y<j\}$. The equality
\eqref{atomic_exposure_eq} shows that this probability distribution
remains the same if we condition additionally on $\Gamma_\pi\cap\{x<
i,\,y \ge j\}$. Therefore, we obtain the following conditional
independence statement: for each $i,j\in[n]$,
\begin{equation}\label{eq:conditional_independence_in_graph}
\begin{split}
  &\text{conditionally on }\Gamma_\pi\cap\{x<i,\, y<j\},\\
  &\qquad\Gamma_\pi\cap\{x\ge i,\,y<j\}\text{ and }\Gamma_\pi\cap\{x<i,\, y\ge j\}\text{ are independent}.
\end{split}
\end{equation}
\begin{proof}[Proof of Lemma~\ref{atomic_exposure}] Our proof relies upon the formula \eqref{eq:sampling_formula}. In the notation used there,
    \begin{equation}\label{eq:sampling_formula_for_proof}
      \P[\pi_{s} = j_k\mid \pi_1, \ldots, \pi_{s-1}, \pi_s\ge j_k] =
  \frac{1-q}{1-q^{n-s-k+2}}.
    \end{equation}
    We first claim that
    \begin{equation}\label{eq:atomic_exposure_proof}
        \P[\pi_s=t \mid \Gamma_\pi \cap U \cap \{x\leq s\}]= \frac{1-q}{1-q^{|\Gamma_\pi \cap U^\complement|}} \cdot
    \1_A,
    \end{equation}
    where $A:=\left\{\Gamma_\pi\cap
    \{x<s,\,y=t\}=\emptyset\right\}\cap\left\{\Gamma_\pi\cap\{x=s,\,y<t\}=\emptyset\right\}$. The equality \eqref{eq:atomic_exposure_proof}
    certainly holds on $A^\complement$ as both sides are zero.
    Now set $k := |[t-1]\smallsetminus\{\pi_1,\ldots,\pi_{s-1}\}|$ and note that, by \eqref{eq:number_of_points_in_corner}, $|\Gamma_\pi \cap U^\complement| =
    n-s-k+2$. Observe that, in the notation of \eqref{eq:sampling_formula_for_proof}, we have that $t = j_k$ on the event $A$, as the set $\{\pi_1,\ldots,
    \pi_{s-1}\}$ misses exactly $k-1$ elements out of $[t-1]$. It
    then follows from formula \eqref{eq:sampling_formula_for_proof} that
    \begin{equation*}
      \P[\pi_s=t \mid \Gamma_\pi \cap U \cap \{x\leq s\}]= \frac{1-q}{1-q^{n-s-k+2}} = \frac{1-q}{1-q^{|\Gamma_\pi \cap U^\complement|}}\quad\text{on
      $A$},
    \end{equation*}
    finishing the proof of \eqref{eq:atomic_exposure_proof}.

    Next, we observe that the argument used above to
    derive \eqref{eq:conditional_independence_in_graph} from
    \eqref{atomic_exposure_eq} may also be used to derive
    \eqref{eq:conditional_independence_in_graph} from
    \eqref{eq:atomic_exposure_proof}. Applying
    \eqref{eq:conditional_independence_in_graph} with
    $i=s+1$ and $j=t$ shows
    that $\pi_s$ is conditionally independent of $\Gamma_\pi \cap U \cap \{x >
    s\}$ conditioned on $\Gamma_\pi \cap U \cap \{x\leq s\}$.
    Thus the formula \eqref{atomic_exposure_eq} is a consequence of
    \eqref{eq:atomic_exposure_proof}.
\end{proof}

\subsection{Diagonal Exposure and the Arc Chain
Process}\label{sec:diagonal_exposure}

The main lemma of the previous section, Lemma~\ref{atomic_exposure},
provides a procedure for calculating the distribution of certain
portions of the graph $\Gamma_\pi$, of a Mallows permutation $\pi$,
given others. This gives rise to several iterative algorithms for
exposing the full graph. The proofs of our main theorems rely on a
particular method of exposing $\Gamma_{\pi}$ which will turn out to
be particularly convenient. The total portion of $\Gamma_\pi$ that will be revealed by time $t$ will consist of $\Gamma_\pi \cap \{x\le t,y \le t\}$.
Equivalently, as we pass from time $t$ to $t+1$ we reveal
\begin{equation}\label{eq:t_iteration_new_portions}
    \Gamma_\pi\cap\{x=t+1,\, y \le t\}, \quad \Gamma_\pi\cap\{x\le t,\, y = t+1\} \quad \text{and} \quad \Gamma_\pi\cap\{x = t+1,\, y = t+1\}.
\end{equation}
Formally, we define a finite filtration consisting of the sigma-algebras
\[
    \CF_0\subseteq \CF_1 \subseteq \ldots \subseteq \CF_{n-1} \subseteq \CF_{n}
\]
defined by
\begin{equation}\label{eq:F_t_definition}
    \CF_t:=\sigma(\Gamma_\pi \cap \{x \leq t,y \leq t\}).
\end{equation}
Thus, $\CF_0$ is the trivial $\sigma$-algebra and $\CF_{n}$ is the
$\sigma$-algebra generated by $\pi$. We call this exposure procedure
the \emph{diagonal exposure} of $\pi$ as the procedure exposes the
graph in the diagonal direction. Corresponding to this filtration we
introduce the notation
\[
    \P_t[A]:=\P[A \mid \CF_t] \quad \text{and} \quad \E_t[X]:=\E[X \mid
    \CF_t],
\]
for an event $A$ and a random variable $X$.

An important quantity to keep track of during the diagonal exposure
process is the number of elements of $\pi$ in the revealed portion
of the graph $\Gamma_\pi$ at each time $t$, i.e.,
$|\Gamma_{\pi}\cap\{x \leq t,\,y \leq t\}|$. Our next definition
introduces the counting process of an equivalent quantity,
$|\Gamma_{\pi}\cap\{x \leq t,\,y > t\}|$, which will appear more
frequently in our analysis. This quantity, as we elaborate upon in
Section~\ref{section_mt}, counts the number of open `arcs', i.e.,
portions of cycles that are yet to be closed, which are known using
the information in $\CF_t$.
\begin{definition}[Arc Chain]
    The \emph{arc chain} $(\kappa_t)$, $0\le t\le n$, of a permutation
    $\pi\in\S_n$ is defined by
    \begin{equation}\label{arc_chain_def}
        \kappa_t = \kappa_t(\pi) := |\{i\in[t] \mid \pi_i > t\}| = t - |\Gamma_\pi \cap \{x\leq t, y\leq t\}|,
    \end{equation}
    that is, $\kappa_t$ counts the number of $\pi_1,\ldots,\pi_{t}$ that are
greater than $t$.
\end{definition}

The arc chain is adapted to the filtration $(\CF_t)$, that is,
$\kappa_t$ is determined by $\CF_t$. One should note that $\pi$ and
$\pi^{-1}$ share the same arc chain, that is,
\begin{equation}\label{eq:kappa_t_symmetric_def}
   \kappa_t = |\{t<i\le n \mid \pi_i\le t\}|.
\end{equation}
The next proposition formalizes the fact that $(\kappa_t)$ is a
time-inhomogeneous Markov chain.
\begin{proposition}\label{mallows_chain_transition_prob}
    Let $\pi\sim\mu_{n,q}$. The arc chain $\kappa$ of $\pi$ is a time-inhomogeneous Markov chain, with respect to the filtration $(\CF_t)$, satisfying $\kappa_0=0$ and $|\kappa_{t+1}-\kappa_{t}|\leq
    1$, with transition probabilities given by
    \begin{equation}
    \begin{split}
           \label{eq:arc_chain_transition_prob}
            \P_t \left[ \kappa_{t+1}=\kappa_t-1 \right] &=
            \left(\frac{1-q^{\kappa_t}}{1-q^{n-t}}\right)^2 , \\
            \P_t \left[ \kappa_{t+1}=\kappa_t \right] &=
            \frac{q^{\kappa_t} - q^{n-t}}{1-q^{n-t}}\cdot\frac{2 - q^{\kappa_t} - q^{\kappa_t + 1}}{1-q^{n-t}}
            ,\\
            \P_t \left[ \kappa_{t+1}=\kappa_t+1 \right] &=
            \frac{q^{\kappa_t}-q^{n-t}}{1-q^{n-t}} \cdot
            \frac{q^{\kappa_t+1}-q^{n-t}}{1-q^{n-t}}.
    \end{split}
    \end{equation}
\end{proposition}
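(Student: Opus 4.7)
The plan is to exploit the fact that passing from $\CF_t$ to $\CF_{t+1}$ reveals only the three disjoint pieces listed in \eqref{eq:t_iteration_new_portions}. I would encode these by three indicator random variables: $Y_1 := \1\{\pi_{t+1} \le t\}$, $Y_2 := \1\{\pi^{-1}(t+1) \le t\}$, and $X := \1\{(t+1,t+1) \in \Gamma_\pi\}$; note that $X Y_1 = X Y_2 = 0$. A point-count gives $|\Gamma_\pi \cap \{x \le t+1,\, y \le t+1\}| = (t - \kappa_t) + Y_1 + Y_2 + X$, which rearranges to
\[
    \kappa_{t+1} - \kappa_t \;=\; 1 - Y_1 - Y_2 - X \;\in\; \{-1,0,1\}.
\]
This immediately yields $\kappa_0=0$, the bound $|\kappa_{t+1}-\kappa_t|\le 1$, and the identifications $\{\kappa_{t+1}=\kappa_t-1\}=\{Y_1=Y_2=1\}$ and $\{\kappa_{t+1}=\kappa_t+1\}=\{Y_1=Y_2=X=0\}$.

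The key structural input is the conditional independence \eqref{eq:conditional_independence_in_graph}: applied with $i=j=t+1$, it states that conditionally on $\CF_t$, the portions $\Gamma_\pi \cap \{x\ge t+1,\,y\le t\}$ and $\Gamma_\pi \cap \{x\le t,\,y\ge t+1\}$ are independent, and hence so are $Y_1$ and $Y_2$. For the marginals I would use the sampling formula \eqref{eq:sampling_formula}: given $\pi_1,\ldots,\pi_t$, the values $\le t$ still missing are precisely the $\kappa_t$ smallest among $j_1<\cdots<j_{n-t}$, so summing the geometric weights gives $\P_t[Y_1=1]=(1-q^{\kappa_t})/(1-q^{n-t})$, a function of $\CF_t$ only through $\kappa_t$. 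The inverse symmetry \eqref{inv_symmetry} together with the inversion-invariance of $\CF_t$ and $\kappa_t$ (cf.\ \eqref{eq:kappa_t_symmetric_def}) yields the same expression for $\P_t[Y_2=1]$. Multiplying the two gives $\P_t[\kappa_{t+1}=\kappa_t-1] = ((1-q^{\kappa_t})/(1-q^{n-t}))^2$, matching the first formula in \eqref{eq:arc_chain_transition_prob}.

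The remaining ingredient is $\P_t[X=1]$, for which I would invoke Lemma~\ref{atomic_exposure} with $s=t'=t+1$, so that $U=\{x\le t\text{ or }y\le t\}$. By \eqref{eq:number_of_points_in_corner}, $|\Gamma_\pi \cap U^\complement|=n-t-\kappa_t$ is already $\CF_t$-measurable, so averaging the lemma's right-hand side over $\Gamma_\pi \cap U$ given $\CF_t$ and using the conditional independence of $Y_1,Y_2$ gives
\[
    \P_t[X=1]\;=\;\frac{1-q}{1-q^{n-t-\kappa_t}}\,\P_t[Y_1=0]\,\P_t[Y_2=0].
\]
Since $\{X=1\}\subset\{Y_1=Y_2=0\}$, the $+1$ transition follows from
\[
    \P_t[\kappa_{t+1}=\kappa_t+1]\;=\;\P_t[Y_1=0]\,\P_t[Y_2=0]-\P_t[X=1];
\]
substituting $\P_t[Y_i=0]=(q^{\kappa_t}-q^{n-t})/(1-q^{n-t})$ and a short algebraic simplification produces the advertised product $(q^{\kappa_t}-q^{n-t})(q^{\kappa_t+1}-q^{n-t})/(1-q^{n-t})^2$. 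The middle probability is then obtained by complementation (or, equivalently, by summing the three sub-cases $\{X=1\}$, $\{Y_1=1,Y_2=0\}$, $\{Y_1=0,Y_2=1\}$). Since each computed expression depends on $\CF_t$ only through $\kappa_t$, the $(\CF_t)$-Markov property falls out of the same computation. The only mildly delicate point is the passage from the lemma's conditioning on $\Gamma_\pi\cap U$ to the coarser $\CF_t$-conditioning in the computation of $\P_t[X=1]$, which is handled cleanly because the prefactor in \eqref{atomic_exposure_eq} is already $\CF_t$-measurable.
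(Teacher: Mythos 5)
Your proposal is correct and follows essentially the same route as the paper's proof: both decompose the step via the three newly revealed pieces in \eqref{eq:t_iteration_new_portions}, compute $\P_t[\pi_{t+1}\le t]$ by summing geometric weights from \eqref{eq:sampling_formula}, transfer this to the column event via the inverse symmetry \eqref{inv_symmetry}, invoke the conditional independence \eqref{eq:conditional_independence_in_graph} to multiply the two, handle the fixed-point event through Lemma~\ref{atomic_exposure}, and finish by complementation. The only difference is cosmetic (indicator arithmetic $\kappa_{t+1}-\kappa_t=1-Y_1-Y_2-X$ in place of the paper's $1-|Z|$), and your algebraic simplification of the $+1$ transition checks out.
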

As an illustration of the usefulness of the arc chain, we note that
the probability that $\pi$ has a fixed point at position $t+1$,
given the information in $\CF_t$, has a simple expression in terms
of $\kappa_t$.
\begin{lemma}\label{fix_point_equation}
    Let $\pi\sim\mu_{n,q}$ and $\kappa$ be its arc chain. Then
    \[
        \P_t
        [\pi_{t+1}=t+1]=\frac{q^{\kappa_t}-q^{\kappa_t+1}}{1-q^{n-t}}\cdot\frac{q^{\kappa_t}-q^{n-t}}{1-q^{n-t}},\qquad
        0\le t < n.
    \]
\end{lemma}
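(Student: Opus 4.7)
The plan is to combine the sampling formula \eqref{eq:sampling_formula}, the tower property and the inverse symmetry \eqref{inv_symmetry} in three short steps, with the sampling formula handling the ``$\pi_{t+1}$-part'' of the event and the inverse symmetry handling the ``position-of-$t+1$-part''.

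First I would apply \eqref{eq:sampling_formula} to compute $\P[\pi_{t+1} = t+1 \mid \pi_1, \ldots, \pi_t]$ directly. For this probability to be nonzero the value $t+1$ must be available, i.e., $t+1 \notin \{\pi_1, \ldots, \pi_t\}$; on this event the $\kappa_t$ available values in $[t]$ (namely $[t]\smallsetminus\{\pi_1, \ldots, \pi_t\}$) are precisely the elements of the $n-t$ remaining values that are strictly smaller than $t+1$, so the rank of $t+1$ among them is $\kappa_t + 1$. Plugging into \eqref{eq:sampling_formula} yields
\[
    \P[\pi_{t+1} = t+1 \mid \pi_1, \ldots, \pi_t] = \frac{(1-q) q^{\kappa_t}}{1-q^{n-t}}\cdot \1\{t+1 \notin \{\pi_1, \ldots, \pi_t\}\}.
\]

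Next, since $\kappa_t$ is $\CF_t$-measurable and $\CF_t \subseteq \sigma(\pi_1, \ldots, \pi_t)$, the tower property gives
\[
    \P_t[\pi_{t+1} = t+1] = \frac{(1-q) q^{\kappa_t}}{1-q^{n-t}} \cdot \P_t[t+1 \notin \{\pi_1, \ldots, \pi_t\}].
\]
Using $(1-q)q^{\kappa_t} = q^{\kappa_t} - q^{\kappa_t+1}$, the first factor already matches the first factor in the statement, so the proof reduces to showing $\P_t[t+1 \notin \{\pi_1, \ldots, \pi_t\}] = (q^{\kappa_t} - q^{n-t})/(1-q^{n-t})$.

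Finally, I would use the inverse symmetry to transport this to a quantity that \eqref{eq:sampling_formula} handles directly. The event $\{t+1 \notin \{\pi_1, \ldots, \pi_t\}\}$ coincides with $\{\pi^{-1}_{t+1} > t\}$; moreover the region $\{x \le t, y \le t\}$ is invariant under reflection across the diagonal, so as sub-$\sigma$-algebras $\CF_t = \sigma(\Gamma_\pi \cap \{x \le t, y \le t\}) = \sigma(\Gamma_{\pi^{-1}} \cap \{x \le t, y \le t\})$. Combined with \eqref{inv_symmetry} this gives $\P_t[\pi^{-1}_{t+1} > t] = \P_t[\pi_{t+1} > t]$. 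A second application of \eqref{eq:sampling_formula}, summing the geometric terms over ranks $1, \ldots, \kappa_t$ (the available values in $[t]$), yields $\P_t[\pi_{t+1} \le t] = (1-q^{\kappa_t})/(1-q^{n-t})$ and hence $\P_t[\pi_{t+1} > t] = (q^{\kappa_t} - q^{n-t})/(1-q^{n-t})$, completing the proof. The only mild obstacle is the inverse-symmetry observation; everything else is a routine unwinding of the sampling formula.
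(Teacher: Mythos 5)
Your proof is correct, and it uses the same two essential ingredients as the paper's proof (the sampling formula \eqref{eq:sampling_formula} and the inverse symmetry \eqref{inv_symmetry}), but it organizes them differently. The paper factors the event as $\P_t[X=Y=\emptyset]\cdot\P_t[\pi_{t+1}=t+1\mid X=Y=\emptyset]$ with $X=\Gamma_\pi\cap\{x=t+1,\,y\le t\}$ and $Y=\Gamma_\pi\cap\{x\le t,\,y=t+1\}$, which requires two extra tools: the conditional independence of $X$ and $Y$ given $\CF_t$ (from \eqref{eq:conditional_independence_in_graph}) to split the first factor, and Lemma~\ref{atomic_exposure} to evaluate the second. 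You instead condition on all of $\pi_1,\ldots,\pi_t$, which collapses the ``$X$-part'' and the ``value given availability'' part into the single factor $(1-q)q^{\kappa_t}/(1-q^{n-t})$ directly from \eqref{eq:sampling_formula}, leaving only $\P_t[Y=\emptyset]$ to be handled by the inverse symmetry --- exactly as the paper handles it. Your route is thus slightly more elementary, bypassing Lemma~\ref{atomic_exposure} and the conditional independence statement; what it gives up is reusability, since the paper needs those two tools anyway to prove the adjacent Proposition~\ref{mallows_chain_transition_prob}, where the event $\{\kappa_{t+1}=\kappa_t-1\}=\{X\neq\emptyset,\,Y\neq\emptyset\}$ genuinely requires the independence of $X$ and $Y$. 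One small point worth making explicit in your last step: the inverse symmetry gives $\P_t[\pi^{-1}_{t+1}>t]$ as the reflected evaluation of the same conditional-probability function, so concluding it equals $\P_t[\pi_{t+1}>t]$ uses that this function depends on $\Gamma_\pi\cap\{x\le t,\,y\le t\}$ only through the reflection-invariant quantity $\kappa_t$ --- which your subsequent geometric-sum computation does establish, and which is the same (implicit) point in the paper's remark that $\pi$ and $\pi^{-1}$ share the same arc chain.
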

We prove the proposition and lemma together.
\begin{proof}[Proof of Proposition~\ref{mallows_chain_transition_prob} and Lemma~\ref{fix_point_equation}]
The newly revealed portions of $\Gamma_\pi$ at time $t+1$ were
described in \eqref{eq:t_iteration_new_portions}. Denote the first
two of these portions by
\[
    X:=\Gamma_\pi\cap \{x = t + 1,  y \le t\} \quad \text{and} \quad Y:=\Gamma_\pi\cap \{x \le t,  y = t+1\}.
\]
We claim that
\begin{equation}\label{diag_exposure_eq1}
    \P_t[X\neq \emptyset]=\P_t[Y\neq \emptyset] =
    \frac{1-q^{\kappa_t}}{1-q^{n-t}}.
\end{equation}
 It is convenient to derive this directly from
\eqref{eq:sampling_formula}. Write $j_1<j_2<\cdots<j_{n-t}$ for the
values in $[n]\smallsetminus\{\pi_1, \ldots, \pi_{t}\}$ and observe
that $j_k \le t$ if and only if $k \le \kappa_t$, see
\eqref{eq:kappa_t_symmetric_def}. Therefore, it follows from
\eqref{eq:sampling_formula} that
\begin{equation*}
  \P_t[X\neq \emptyset]=\E_t\big[\P[X\neq \emptyset\mid \pi_1,\ldots,
  \pi_{t}]\big] = \E_t\left[\sum_{k=1}^{\kappa_t} \frac{1-q}{1-q^{n-t}} \cdot
  q^{k-1}\right] = \frac{1-q^{\kappa_t}}{1-q^{n-t}}.
\end{equation*}
The equality $\P_t[X\neq \emptyset]=\P_t[Y\neq \emptyset]$ follows
from the symmetry \eqref{inv_symmetry}, as $\pi$ and $\pi^{-1}$
share the same arc chain.

Now note that
\[
    \P_t[\pi_{t+1}=t+1] = \P_t[X=Y=\emptyset] \cdot \P_t[\pi_{t+1}=t+1\mid
    X=Y=\emptyset].
\]
The second factor can be computed directly from
Lemma~\ref{atomic_exposure},
\begin{equation*}
  \P_t[\pi_{t+1}=t+1\mid X=Y=\emptyset] =
  \frac{1-q}{1-q^{n-t-\kappa_t}}.
\end{equation*}
In addition, observe that $X$ and $Y$ are conditionally independent
given $\CF_t$, as follows from
\eqref{eq:conditional_independence_in_graph}. Thus the value of the
first factor may be calculated from \eqref{diag_exposure_eq1}, which
completes the proof of Lemma~\ref{fix_point_equation}.

Observe that $\kappa_{t+1}-\kappa_t = 1 -|Z|$, where $Z:= X\cup Y
\cup (\Gamma_\pi \cap \{(t+1,t+1)\})$, as follows from the
definition~\eqref{arc_chain_def} of $(\kappa_t)$. As $|Z|\le 2$, it
follows that $|\kappa_{t+1}-\kappa_t|\leq 1$. The equations in
\eqref{eq:arc_chain_transition_prob} can be verified by computations
similar to the ones used to prove Lemma~\ref{fix_point_equation}, as
\begin{align*}
    \P_t[\kappa_{t+1}=\kappa_t-1]
    &=\P_t[|Z|=2]
    =\P_t[X\neq \emptyset \text{ and }Y\neq \emptyset], \\
    \P_t[\kappa_{t+1}=\kappa_t+1]
    &=\P_t[|Z|=0]
    =\P_t[X=Y=\emptyset]\cdot\P_t[\pi_{t+1}\neq t+1\mid
    X=Y=\emptyset],
\end{align*}
and the value of $\P_t[\kappa_{t+1}=\kappa_t]$ is derived from these
using that $|\kappa_{t+1}-\kappa_{t}|\leq 1$.
\end{proof}

\subsection{The Distribution of the Arc Chain} \label{sec:arc_chain_distribution}
In this section we study the distribution of the arc chain of a
Mallows permutation at a fixed time $t$. Our main result,
Theorem~\ref{mallows_chain_bounds} below, states that the value of
the chain is unlikely to be much larger than the value of $\xi$
given in \eqref{eq:xi_def}. Figure~\ref{fig:kappa} depicts the
percentiles of the distribution of the arc chain for certain values
of $n$ and $q$ and all times~$t$. These suggest that the typical
values of the arc chain are close to $\xi$ when $t$ is bounded away
from $1$ and $n$. For such times, we establish in
Proposition~\ref{prop:arc_chain_limit} below a formula for the
limiting distribution of the arc chain when $n$ tends to infinity
with $q$ fixed.

It is convenient to refer to the arc chain as an abstract Markov
chain, without reference to an underlying Mallows permutation, as
facilitated by the following definition.

\begin{definition}
    A random sequence $(\kappa_t)$, $0\le t\le n$, is an \emph{$(n,q)$-arc chain}, denoted $\kappa\sim\AC_{n,q}$,
    if $\kappa$ is a time-inhomogeneous Markov chain with transition probabilities as in Proposition~\ref{mallows_chain_transition_prob}
    and some initial distribution $\kappa_0$ supported in $\{0,\ldots,
    n\}$.
\end{definition}
We point out that the formulas in
Proposition~\ref{mallows_chain_transition_prob} constitute valid
transition probabilities (that is, they are non-negative and sum to
1) when $0\le \kappa_t\le n-t$. Using induction on $t$, one checks
that an $(n,q)$-arc chain satisfies this condition for all $t$
almost surely.

\pgfplotsset{width=\textwidth}
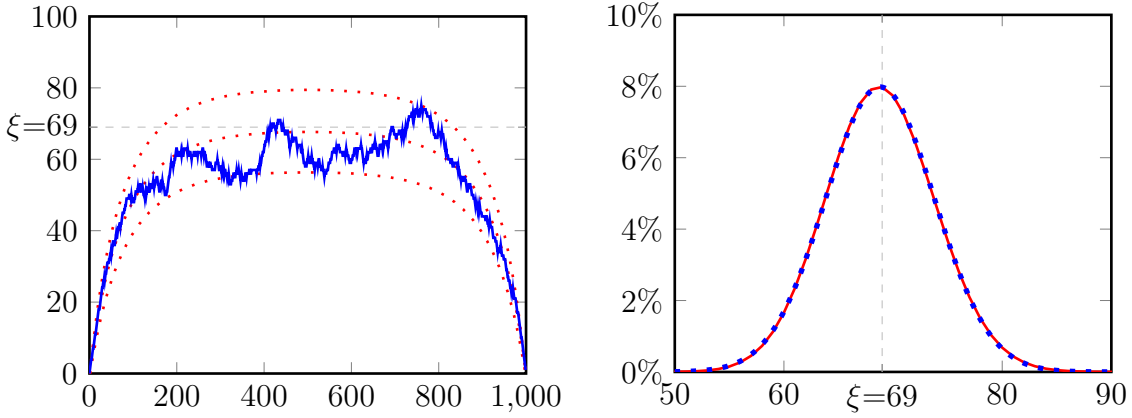
\begin{figure}
\begin{subfigure}[t]{0.46\textwidth}
\begin{tikzpicture}
\begin{axis}[
xmin = 0, xmax = 1000, ymin = 0, ymax = 100, line width = 1pt, extra
y ticks={69}, extra y tick style={grid=major, dashed}, extra y tick
labels={$\xi$=69} ] \addplot[red,loosely dotted] table
{KappaDist01p.dat}; \addplot[red,loosely dotted] table
{KappaDist50p.dat}; \addplot[red,loosely dotted] table
{KappaDist99p.dat}; \addplot[blue] table {KappaSample.dat};

\end{axis}
\end{tikzpicture}
\caption{
    The blue line depicts a single sample of an arc chain.
    The red dotted lines depict (linearly interpolated) percentiles of the distribution of the arc chain: 1st, 50th and 99th.
}
\label{fig:kappa}
\end{subfigure}
\quad
\begin{subfigure}[t]{0.46\textwidth}
\begin{tikzpicture}
\begin{axis}[
xmin = 50, xmax = 90, ymin = 0, ymax = 10, ytick = {0,2,4,6,8,10},
yticklabels = {0\%,2\%,4\%,6\%,8\%,10\%}, xtick = {50,60,80,90},
line width = 1pt, extra x ticks={69}, extra x tick
style={grid=major, dashed}, extra x tick labels={$\xi$=69} ]
\addplot[red] table {KappaDist.dat}; \addplot[ blue, domain=50:90,
samples=1001, loosely dotted, line width = 2pt ]
{100*exp(-(x-ln(2)*100+0.5)^2 / (2*5^2)) / (5 * sqrt(2*pi))};

\end{axis}
\end{tikzpicture}
\caption{
    The red line is the graph of the stationary distribution of an $(\infty,0.99)$-arc chain.
    The blue dotted line is the graph of a normal distribution.
}
\label{fig:kappa_distr}
\end{subfigure}
\caption{Distributions associated with the arc chain of a Mallows
permutation with parameters $n=1000$ and $q=0.99$.}
\end{figure}
\begin{theorem} \label{mallows_chain_bounds}
    Let $\kappa\sim\AC_{n,q}$ with $\kappa_0=0$. Then for all $0\le t \le n$ and $d\in\N$ we have
    \begin{equation} \label{mc_single}
        \P[\kappa_t > \xi+d] \leq \frac{q^{d^2+d}}{1-q^{2d}}.
    \end{equation}
\end{theorem}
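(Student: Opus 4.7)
The plan is to derive an exact formula for the one-dimensional marginal distribution of $\kappa_t$ and then read off the desired tail estimate. Since $\kappa_0=0$, Proposition~\ref{mallows_chain_transition_prob} lets me work directly with $\kappa_t=\kappa_t(\pi)$ for $\pi\sim\mu_{n,q}$.

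First, I would observe that $\kappa_t$ depends on $\pi$ only through the unordered set $T:=\{\pi_1,\ldots,\pi_t\}$, via $\kappa_t=|T\cap\{t+1,\ldots,n\}|$. Summing $q^{\inv(\pi)}$ over orderings of $T$ and of its complement, and noting that the number of cross-inversions between the two halves equals $\sum_{a\in T}a - t - \binom{t}{2}$, one obtains
\[
    \P[T=S]\ \propto\ q^{\sum_{a\in S}a}\qquad\text{for } S\subseteq[n],\ |S|=t.
\]
Decomposing such an $S$ as $A\cup B$ with $A\subseteq[t]$ and $B\subseteq\{t+1,\ldots,n\}$ and applying the standard identity $\sum_{|S'|=j,\,S'\subseteq[N]}q^{\sum_{a\in S'}a}=q^{j(j+1)/2}\binom{N}{j}_q$ to each factor gives the closed form
\[
    \P[\kappa_t=k]\ =\ q^{k^2}\cdot\frac{\binom{t}{k}_q\binom{n-t}{k}_q}{\binom{n}{t}_q}.
\]

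From this formula the ratio of consecutive probabilities is
\[
    \frac{\P[\kappa_t=k+1]}{\P[\kappa_t=k]}\ =\ q^{2k+1}\cdot\frac{(1-q^{t-k})(1-q^{n-t-k})}{(1-q^{k+1})^2},
\]
and the pivotal step is to show this is at most $q^{2(k-\xi)}$ whenever $k\ge\xi$. Discarding the $\le 1$ factors in the numerator, the inequality reduces to $q^{2\xi+1}\le(1-q^{k+1})^2$. Using the defining property $q^\xi\le 1/2$, one has $1-q^{k+1}\ge 1-q/2$ on the right and $q^{2\xi+1}\le q/4$ on the left, so the claim boils down to $q\le(2-q)^2$, i.e.\ to the elementary inequality $\sqrt{q}+q\le 2$, which holds for all $q\in(0,1)$.

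Iterating this ratio bound from $k=\xi+1$ upward and using the trivial estimate $\P[\kappa_t=\xi+1]\le 1$ yields $\P[\kappa_t=\xi+\ell]\le q^{\ell(\ell-1)}$ for every $\ell\ge 1$, and summing over $\ell\ge d+1$ while noting that $q^{(d+m)(d+m+1)-d(d+1)}=q^{m(2d+m+1)}\le q^{2dm}$ gives
\[
    \P[\kappa_t>\xi+d]\ \le\ \sum_{m\ge 0}q^{(d+m)(d+m+1)}\ \le\ \frac{q^{d(d+1)}}{1-q^{2d}}.
\]
The main technical point is the sharpness of the ratio bound at the threshold $k=\xi$: for $k<\xi$ the factor $(1-q^{k+1})^{-2}$ is too large to absorb $q^{2k+1}$, so the iteration must start precisely at level $\xi$, which is where the definition of $\xi$ is used essentially.
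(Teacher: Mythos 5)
Your proof is correct, and it takes a genuinely different route from the paper. The paper never computes the law of $\kappa_t$; instead it monotonically couples the $(n,q)$-arc chain with a time-homogeneous $(\infty,q)$-arc chain started from its stationary distribution $\nu$, so that $\P[\kappa_t\ge d]\le\nu[d,\infty]$, and then bounds the tail of $\nu$ via the birth-and-death ratios $w_i=q^{2i-1}/(1-q^i)^2$, showing $w_{\xi+1}\le 1$ and $w_{i+1}\le q^2w_i$. You instead derive the exact marginal $\P[\kappa_t=k]=q^{k^2}\binom{t}{k}_q\binom{n-t}{k}_q/\binom{n}{t}_q$ directly from the Mallows weights (the cross-inversion count and the $q$-binomial identity are both correct, and the normalization is the $q$-Vandermonde identity), and then run a consecutive-ratio argument on this explicit distribution. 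It is worth noting that after you discard the factors $(1-q^{t-k})(1-q^{n-t-k})\le 1$, your ratio $q^{2k+1}/(1-q^{k+1})^2$ is exactly the paper's $w_{k+1}$, and your threshold computation at $k=\xi$ and the final geometric summation coincide with the paper's estimate $\nu_{\xi+s+1}\le q^{s^2+s}$; so the two arguments converge to the same arithmetic, but reach it differently. Your route is more self-contained and yields strictly more information (an exact, manifestly $t$-monotone-free formula for the marginal, from which Proposition~\ref{prop:arc_chain_limit} would also follow by taking $t,n-t\to\infty$); the paper's coupling machinery is heavier here but is reused throughout (e.g.\ in \eqref{mallows_corr_bounds}, Theorem~\ref{mallows_chain_return_time} and Proposition~\ref{prop:arc_chain_limit}), which is why it is developed in that form. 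The only point you lean on implicitly is that the abstract chain $\AC_{n,q}$ started at $0$ has the same one-dimensional marginals as the arc chain of $\pi\sim\mu_{n,q}$; this is immediate from Proposition~\ref{mallows_chain_transition_prob} and uniqueness of the Markov chain law, as you indicate.
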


The idea of proof involves the definition of a limiting
time-homogeneous Markov chain, corresponding formally to the case
that $n=\infty$, and bounding the distribution of the $(n,q)$-arc
chain by the stationary distribution of the limiting chain. Some of
the tools that we develop here will be used later in the paper as
well.

We recall that a time-homogeneous Markov chain $(\alpha_t)$ which
takes values in the non-negative integers and satisfies
$|\alpha_t-\alpha_{t+1}|\leq 1$ is called a \emph{birth-and-death
chain}.
\begin{definition}\label{def:infinity_q_arc_chain}
    A random sequence $(\kh_t)_{t\ge 0}$ is an \emph{$(\infty,q)$-arc chain}, denoted $\kh~\sim~\AC_{\infty,q}$,
    if $\kappa$ is a birth-and-death chain with transition
    probabilities given by
    \begin{equation}
    \begin{split}\label{eq:infinite_arc_chain_transition_prob}
      &\P[ \kh_{t+1} = \kh_t - 1\mid \kh_t]=(1-q^{\kh_t})^2,\\
      &\P[ \kh_{t+1} = \kh_t\mid \kh_t]=2 q^{\kh_t} - q^{2\kh_t} - q^{2\kh_t+1},\\
      &\P[ \kh_{t+1} = \kh_t + 1\mid \kh_t]=q^{2\kh_t+1}.
    \end{split}
    \end{equation}
\end{definition}
There is a formula for the stationary measure of a birth-and-death
chain. If $(\alpha_t)$ is a birth-and-death chain taking values in
$\{0,\ldots, m\}$, where $m$ may be finite or infinite, having
positive transition probabilities between consecutive integers in
$\{0,\ldots, m\}$, then $\alpha$ has a stationary measure $z$
defined by\footnote{Here and later in the paper, to avoid
introducing extra notation, we denote the transition probability of
the chain from $i-1$ to $i$ at time $t$ by $\P[\alpha_{t+1} = i \mid
\alpha_t=i-1]$, even if $\P[\alpha_t=i-1]=0$, and use similar
notation for other transition probabilities.}
\begin{equation}
        z_0:=1\quad\text{and}\quad z_s:=\prod_{i=1}^s \frac{\P[ \alpha_{t+1} = i \mid \alpha_t=i-1]}{\P[\alpha_{t+1} = i-1 \mid
        \alpha_t=i]}\quad\text{for $1\le s\le m$}. \label{stationary_measure_eq}
\end{equation}
This is straightforward to verify directly. It is also not difficult
to check that the stationary measure is unique up to scaling though
we shall not use this fact (see also
\cite[Section~2.5]{mixing_times}).

This fact allows us to find the stationary distribution of an
$(\infty,q)$-arc chain $\kh$. Put
\begin{equation}\label{bound_chain_transitions}
\begin{aligned}
    u_i &:= \P[ \kh_{t+1} = i \mid \kh_t=i-1]=q^{2i-1} > 0, &i\ge 1,\\
    v_i &:= \P[ \kh_{t+1} = i-1\mid \kh_t=i ]=(1-q^i)^2 > 0, &i\ge 1.
\end{aligned}
\end{equation}
Then the sequence $(\nu_s)$, $s\ge 0$, defined by
\begin{equation}\label{eq:bounding_chain_stat_dist}
    \nu_s := \frac{\prod_{i=1}^s \frac{u_i}{v_i}}{\sum_{j\geq 0}\prod_{i=1}^j \frac{u_i}{v_i}}
\end{equation}
defines a stationary distribution for $\kh$, where, as usual, an
empty product is interpreted as $1$. The denominator in
\eqref{eq:bounding_chain_stat_dist} is finite since $u_i\rightarrow
0$ and $v_i\rightarrow 1$ as $i\to \infty$. See
Figure~\ref{fig:kappa_distr} for a graph of $\nu$.

We study further the relation between the distributions of the
$(n,q)$-arc chain $\kappa$ and the $(\infty,q)$-arc chain $\kh$. Our
next proposition shows that in a suitable limit, in which $q$ is
fixed, the distribution of $\kappa_t$ converges to the stationary
distribution of $\kh$. This proposition will be of use in
Section~\ref{ggm}.
\begin{proposition}\label{prop:arc_chain_limit}
Let $\kappa\sim\AC_{n,q}$ with $\kappa_0=0$ and set $t=t_n$. If both
$t\to\infty$ and $n-t\to\infty$ then the law of $\kappa_t$ converges
to the stationary distribution of $\AC_{\infty,q}$, as $n$ tends to
infinity with $q$ fixed.
\end{proposition}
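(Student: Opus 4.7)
The plan is to deduce convergence of $\kappa_t$ to $\nu$ by coupling the last $T$ steps of $\kappa$, for a fixed large $T$, with an $(\infty,q)$-arc chain $\kh$, and invoking the ergodicity of $\kh$. The key quantitative input is that, as $n-t\to\infty$ with $q$ fixed and for any state $k$ bounded by some $M$, the one-step transition probabilities in \eqref{eq:arc_chain_transition_prob} (at any time $s\le t$) differ from those in \eqref{eq:infinite_arc_chain_transition_prob} by at most $C(M)\cdot q^{n-t}$. This follows by direct comparison of the explicit formulas in the two displays, using $q^{n-s}\le q^{n-t}$ and the bound $q^k\ge q^M>0$.

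First I would establish that $\kh$ is ergodic with stationary distribution $\nu$ from \eqref{eq:bounding_chain_stat_dist}. The chain is irreducible since the $u_i,v_i$ in \eqref{bound_chain_transitions} are strictly positive, and aperiodic since the stay-probability equals $1-q>0$ at state $0$. The sum in the denominator of \eqref{eq:bounding_chain_stat_dist} converges because $u_i/v_i\lesssim q^{2i}$, so $\nu$ is a probability measure and $\kh$ is positive recurrent. Standard Markov chain theory then yields $\P[\kh_T=\cdot\mid \kh_0=m]\to\nu$ in total variation as $T\to\infty$ for every $m\in\N$.

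The coupling argument proceeds in three stages. Fix $\eps>0$. Theorem~\ref{mallows_chain_bounds} gives tightness of $\kappa_{t-T}$ uniformly in $n$, $t$ and $T$, so there is $M_1$ with $\P[\kappa_{t-T}>M_1]<\eps$ for all admissible parameters (taking $n$ large enough that $t\ge T$). Using the ergodicity just established, choose $T$ so that $d_{\mathrm{TV}}(\P[\kh_T=\cdot\mid \kh_0=m],\nu)<\eps$ uniformly for $m\le M_1$, and then enlarge to $M_2\ge M_1$ so that $\P[\max_{0\le s\le T}\kh_s>M_2\mid \kh_0=m]<\eps$ uniformly for $m\le M_1$. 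Now construct a pathwise coupling of $(\kappa_{t-T+s})_{s=0}^T$ and $(\kh_s)_{s=0}^T$, starting from their common value $\kappa_{t-T}$: at each step, while the two chains coincide and the common state is at most $M_2$, use a maximal coupling of the respective one-step transition measures, which agree to within total-variation distance at most $C(M_2)\cdot q^{n-t}$ by the key observation. A union bound over the $T$ steps then yields coupling-failure probability at most $T\cdot C(M_2)\cdot q^{n-t}$. Assembling all four error sources gives $d_{\mathrm{TV}}(\P[\kappa_t\in\cdot\,],\nu)\le 3\eps + T\cdot C(M_2)\cdot q^{n-t}$, which is at most $4\eps$ once $n$ is sufficiently large; since $\eps$ was arbitrary, this completes the proof.

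The main obstacle is essentially ordering the quantifiers correctly rather than any deep analytic difficulty: $M_1$ must be chosen from tightness of $\kappa$ before $T$ is selected (so that ergodicity of $\kh$ applies uniformly for starting points in $[0,M_1]$), $M_2$ must be chosen after $T$ (to control excursions of $\kh$ over a window of length $T$), and $n\to\infty$ must be taken only at the end so that the then-fixed constants $C(M_2)$ and $T$ are overwhelmed by the factor $q^{n-t}\to 0$.
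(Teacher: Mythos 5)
Your argument is correct, and it takes a genuinely different route from the paper. The paper sandwiches the law of $\kappa_t$ between two stationary distributions using the monotone-coupling machinery it has already built: from above by $\nu$ via \eqref{mallows_corr_bounds}, and from below by the stationary law $\tilde\nu_h$ of an auxiliary finite-state birth-and-death chain obtained by freezing $n-t$ at a value $h$ (shown to be dominated by $\kappa$ for $t\le n-h$ via the analogue of \eqref{eq:kappa_kh_ineqs}); it then lets $h\to\infty$ and checks $\tilde\nu_h\to\nu$ from the explicit product formula \eqref{stationary_measure_eq}. This only ever invokes the convergence theorem for \emph{finite-state} chains and needs no quantitative comparison of kernels. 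Your proof instead runs a maximal coupling of the last $T$ steps of $\kappa$ with the limiting chain $\kh$, using tightness of $\kappa_{t-T}$ (Theorem~\ref{mallows_chain_bounds}), ergodicity of the countable-state chain $\kh$, and the uniform $O(q^{n-t})$ closeness of the one-step kernels on bounded states; your quantifier ordering ($M_1$, then $T$, then $M_2$, then $n\to\infty$) is the right one, and the kernel comparison checks out since each factor $\frac{q^k-q^{n-s}}{1-q^{n-s}}$ differs from $q^k$ by at most $\frac{q^{n-s}}{1-q^{n-s}}\le 2q^{n-t}$ for $s\le t$ and $n$ large. Two minor remarks: since the chain moves by at most one per step, you may simply take $M_2=M_1+T$ and the excursion error vanishes identically; and your method does require justifying total-variation convergence for an infinite-state chain (irreducible, aperiodic, with a stationary probability measure, hence positive recurrent), which is standard but is precisely what the paper's truncation is designed to avoid. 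In exchange, your approach is more robust — it uses no monotonicity, only local uniform convergence of the kernels plus tightness — and it yields an explicit rate in terms of $T$ and $q^{n-t}$ if one wants it.
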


Our main tool for proving the above theorem and proposition is a
coupling in which the $(\infty,q)$-arc chain bounds the $(n,q)$-arc
chain at all times. We first introduce a general method for
performing such couplings.

Let $\alpha$ be a Markov chain, possibly time-inhomogeneous, taking
values in the non-negative integers and satisfying
$|\alpha_t-\alpha_{t+1}|\leq 1$. Let $(U_t)$ be a sequence of
independent random variables, each uniformly distributed on $[0,1]$.
One may couple the Markov chain $\alpha$ with the sequence $U$ as
follows. The initial distribution $\alpha_0$ is taken independent of
$U$. Then, for each $t\ge 0$, $\alpha_{t+1}:=\alpha_t + F_t^\alpha
(\alpha_t, U_t)$ where
\[
   F_t^\alpha (a,u):=\1_{\{u > 1-\P[\alpha_{t+1} = a+1\,\mid\, \alpha_t=a]\}} - \1_{\{u\leq
   \P[\alpha_{t+1} = a-1\,\mid\, \alpha_t=a]\}}.
\]
This can be understood as `$\alpha_{t+1}$ is a monotone function of
$U_t$ for a given $\alpha_t$'. We say that a set of Markov chains of
the above type is \emph{monotonically coupled} if they are all
coupled to the same sequence $U$ via the above method.

\begin{proposition}  \label{arc_chain_monotonicity}
    Let $q,\hat{q}\in(0,1)$ and $n,\hat{n}\in \N\cup\{\infty\}$ satisfy $q \leq \hat{q}$ and $n\leq \hat{n}$.
    Let an $(n,q)$-arc chain $\kappa$ be monotonically coupled with an $(\hat{n},\hat{q})$-arc chain $\kh$
    that satisfies $\kappa_0\leq \kh_0$ almost surely. Then, almost surely, $\kappa_t\le \kh_t$ for all $0\le t \le n$.
\end{proposition}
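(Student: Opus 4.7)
I would proceed by induction on $t$, with the base case given by the hypothesis $\kappa_0 \le \hat\kappa_0$. For the inductive step, assume $\kappa_t \le \hat\kappa_t$. If the gap $\hat\kappa_t - \kappa_t \ge 2$, then since each chain moves by at most one per step, $\kappa_{t+1} \le \kappa_t + 1 \le \hat\kappa_t - 1 \le \hat\kappa_{t+1}$ automatically. The two tight cases that require real work are $\kappa_t = \hat\kappa_t$ (where the common $U_t$ must not push the chains into opposite directions) and $\hat\kappa_t = \kappa_t + 1$ (where I must rule out a crossing in which $\kappa$ steps up while $\hat\kappa$ steps down).

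Writing $p^\pm(a;q,N)$ for the up/down transition probabilities of the $(n,q)$-arc chain at state $a$ and time $t$ with $N=n-t$ (extended to $N=\infty$ via \eqref{eq:infinite_arc_chain_transition_prob}), the monotone coupling realizes $\kappa_{t+1}-\kappa_t$ as a non-decreasing step function of $U_t$ with jumps at $p^-$ and $1-p^+$; the same is true for $\hat\kappa$ with $\hat N := \hat n - t$. It is therefore enough to establish two facts: (i) (\emph{Monotonicity}) $p^+(a;q,N)$ is non-decreasing in both $q$ and $N$, and $p^-(a;q,N)$ is non-increasing in both; and (ii) (\emph{Within-chain bound}) $p^+(a;q,N) + p^-(a+1;q,N) \le 1$ whenever $a+1 \le N$.

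Given (i) and (ii), the equal case follows because the jump points for $\hat\kappa$ are nested correctly inside those of $\kappa$, forcing the move of $\hat\kappa$ to dominate that of $\kappa$ pointwise in $U_t$. For the gap-one case, a crossing requires $U_t \in (1-p^+(a;q,N),\,p^-(a+1;\hat q,\hat N)]$; by (i) we have $p^-(a+1;\hat q,\hat N) \le p^-(a+1;q,N)$, and then (ii) forces this interval to be empty.

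The verifications of (i) and (ii) are by direct algebra. Monotonicity in $N$ is the easier direction since $p^\pm$ depends on $N$ only through $q^N$. For monotonicity in $q$, I would differentiate: the claim for $p^-$ reduces to the inequality $\phi(q) := -a + Nq^{N-a} - (N-a)q^N \le 0$ on $(0,1]$, which follows from $\phi(1)=0$ together with $\phi'(q)= N(N-a)q^{N-a-1}(1-q^a) \ge 0$; the argument for $p^+$ is analogous after writing $p^+ = g_a \cdot g_{a+1}$ with $g_b(q)=(q^b-q^N)/(1-q^N)$. For (ii), direct expansion gives
\[
p^+(a;q,N)+p^-(a+1;q,N)-1 \;=\; \frac{(1-q^{N-a-1})\, q^{a+1}\, (q^a+q^{a+1}-2)}{(1-q^N)^2},
\]
which is $\le 0$ because $q<1$ forces $q^a+q^{a+1}\le 2$ and $N\ge a+1$ forces $1-q^{N-a-1}\ge 0$. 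The main obstacle I anticipate is the careful algebra of the monotonicity in $q$; the boundary cases $\kappa_t = n-t$ (where $p^- = 1$ and $\kappa$ deterministically decreases) and $\hat n = \infty$ (corresponding to $\hat N = \infty$) slot into the same framework without modification.
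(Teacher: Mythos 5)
Your proposal is correct and follows essentially the same route as the paper: induction on $t$ with the three cases according to the gap $\hat\kappa_t-\kappa_t$, reduced to exactly the paper's system of inequalities, namely monotonicity of the up/down transition probabilities in $q$ and in $n-t$ together with the within-chain bound $p^+(a)+p^-(a+1)\le 1$. The only (immaterial) differences are that you verify the $q$-monotonicity by differentiation where the paper observes that $\frac{1-q^a}{1-q^b}=\frac{1+\cdots+q^{a-1}}{1+\cdots+q^{b-1}}$ is decreasing in $q$ and $b$, and that you apply the within-chain bound to the $(n,q)$-chain rather than to the $(\hat n,\hat q)$-chain in the gap-one case.
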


\begin{definition}[Bounding Chain]\label{bounding_chain_def}
    For an $(n,q)$-arc chain $\kappa$, a \emph{bounding chain} is any $(\infty,q)$-arc chain $\kh$ that
    is monotonically coupled with $\kappa$ and satisfies $ \kappa_0 \le \kh_0$ almost surely.
   Proposition~\ref{arc_chain_monotonicity} implies that any bounding chain satisfies $\kappa_t \leq \kh_t$, almost surely, for all $0\le t \le n$.
\end{definition}
\begin{proof}[Proof of Proposition~\ref{arc_chain_monotonicity}]
    The proof relies on the facts that $|\kappa_{t+1}-\kappa_t|\leq 1$, $|\kh_{t+1}-\kh_t|\leq 1$ and the following three
    inequalities,
\begin{align}
    &\P[\kappa_{t+1} = k+1 \mid \kappa_t = k] \leq 1 -  \P[\kh_{t+1} = k \mid \kh_t=k+1], && 0\leq k \leq \min\{n-t, \hat{n}-t-1\},\nonumber\\
    &\P[\kappa_{t+1} = k - 1 \mid
    \kappa_t=k]\ge \P[ \kh_{t+1} = k-1 \mid \kh_t=k], &&0\leq k\leq n-t, \label{eq:kappa_kh_ineqs}\\
    &\P[\kappa_{t+1} = k + 1 \mid
    \kappa_t=k] \le \P[ \kh_{t+1} = k+1 \mid \kh_t=k], &&0\leq k\leq n-t. \nonumber
\end{align}
To prove these inequalities, observe that the transition
probabilities of arc chains are given by
\begin{equation}\label{eq:arc_chain_transition_prob2}
\begin{split}
  &\P[\kappa_{t+1} = k - 1 \mid \kappa_t=k] =
\left(\frac{1-q^k}{1-q^{n-t}}\right)^2,\\
  &\P[\kappa_{t+1} = k + 1
\mid \kappa_t=k] = \left(1-\frac{1-q^{k}}{1-q^{n-t}}\right) \cdot
            \left(1-\frac{1-q^{k+1}}{1-q^{n-t}}\right).
\end{split}
\end{equation}
Thus, the last two inequalities in \eqref{eq:kappa_kh_ineqs} follow
from the fact that $\frac{1-q^a}{1-q^{b}} =
\frac{1+q+\cdots+q^{a-1}}{1+q+\cdots+q^{b-1}}$, with $1\le a\le b$,
decreases with both $b$ and $q$. The first inequality in
\eqref{eq:kappa_kh_ineqs} follows from the third inequality there
and the fact that $\P[\kh_{t+1} = k+1 \mid \kh_t = k] \leq 1 -
\P[\kh_{t+1} = k \mid \kh_t=k+1]$. This last fact follows by
substituting the formulas in \eqref{eq:arc_chain_transition_prob2},
using that $1-x^2 = (1-x)(1+x)$ and taking out the non-negative
common factor $1 - \frac{1-\hat{q}^{k+1}}{1-\hat{q}^{n-t}}$.

We proceed to prove the proposition. Suppose $t < n$ is such that
$\kappa_t\le \kh_t$ almost surely and let us show that
$\kappa_{t+1}\le\kh_{t+1}$, almost surely. Recall that $0\le
\kappa_t\le n-t$ and $0\le \kh_t\le \hat{n}-t$ almost surely, and
let us consider separately the following three cases.
    \begin{itemize}[noitemsep,topsep=3pt]
      \item The inequality is clear if $\kh_t-\kappa_t \geq 2$.
      \item If $\kappa_t = \kh_t - 1$ then $\kappa_{t+1}\le\kh_{t+1}$  follows from the first inequality in \eqref{eq:kappa_kh_ineqs}.
    \item Lastly, if $\kappa_t=\kh_t$ then $\kappa_{t+1}\le\kh_{t+1}$  is a consequence of the second and third inequality in \eqref{eq:kappa_kh_ineqs}.\qedhere
    \end{itemize}
\end{proof}
As a corollary of this proposition we deduce that an $(n,q)$-arc
chain $\kappa$ with $\kappa_0 = 0$ satisfies
\begin{equation}\label{mallows_corr_bounds}
    \P[\kappa_t\geq d]\leq \nu[d,\infty] = \sum_{i\geq d} \nu_i \quad \text{for all $t\le n$ and $d\geq 0$,}
\end{equation}
where $\nu$ is the stationary distribution of an $(\infty,q)$-arc
chain, as given by \eqref{eq:bounding_chain_stat_dist}. This follows
by letting $\kh$ be the bounding chain of $\kappa$ having $\kh_0\sim
\nu$. Then \eqref{mallows_corr_bounds} is a consequence of the facts
that $\kh_t\sim\nu$ and $\kappa_t\leq \kh_t$.

We are now ready to prove Theorem~\ref{mallows_chain_bounds} and Proposition~\ref{prop:arc_chain_limit}.
\begin{proof}[Proof of Theorem~\ref{mallows_chain_bounds}]
    By \eqref{mallows_corr_bounds}, it suffices to prove that the stationary distribution $\nu$ of an $(\infty,q)$-arc chain satisfies
   \begin{equation}\label{eq:mallows_chain_bounds_eq1}
        \nu[\xi + d + 1,\infty] \leq
        \frac{q^{d^2+d}}{1-q^{2d}},\quad d\ge 1.
   \end{equation}
    Let $u_i$ and $v_i$ be as in \eqref{bound_chain_transitions}  and set $w_i:=u_i/v_i$. Formula~\eqref{eq:bounding_chain_stat_dist}
    implies that
    \begin{equation}\label{eq:mallows_chain_bounds_eq2}
        \nu_{\xi+s+1} = \nu_{\xi+1}\cdot \prod_{j = 1}^{s} w_{\xi + j + 1} \leq \prod_{j = 1}^{s} w_{\xi+ j + 1}, \quad s\ge 1.
    \end{equation}
    Observe that $u_{\xi+1}\leq \tfrac{1}{4}$ and $v_{\xi+1}\geq \tfrac{1}{4}$, by the definition \eqref{eq:xi_def} of $\xi$, yielding $w_{\xi+1}\leq 1$.
    One may verify that $w_{i+1}\leq q^2\cdot w_i$, which yields that $w_{\xi+ j +1}\leq  q^{2j}\cdot w_{\xi+1} \leq q^{2j}$.
        By substituting this in \eqref{eq:mallows_chain_bounds_eq2} we
        conclude that
    \[
                \nu_{\xi+s+1}\le q^{s^2+s}.
     \]
    Summing this inequality over $s\geq d$ yields
    \eqref{eq:mallows_chain_bounds_eq1}, by bounding the sum with a geometric progression with quotient $q^{2d}$.
\end{proof}

    \begin{proof}[Proof of Proposition~\ref{prop:arc_chain_limit}]
Recall the \emph{convergence theorem for finite-state Markov chains}:
    if a finite-state time-homogeneous Markov chain $(a_t)$ is aperiodic and irreducible then it has a stationary
    distribution and the distribution of $a_t$ converges to this stationary distribution as
    $t\to\infty$.

       Equation \eqref{mallows_corr_bounds} states that $\nu$ dominates $\kappa_t$.
    We will construct distributions $\tilde\nu_h$ that are asymptotically dominated
by $\kappa_t$ as $t\to\infty$. Then, with the limit of $\kappa_t$ sandwiched between $\tilde\nu_h$
and $\nu$, we will show that $\tilde\nu_h$ approaches $\nu$ as $h\to\infty$.
        Let $h$ be some fixed positive integer and assume without loss of generality that $n-t\geq h$.
        Let $\tilde \kappa$ be the birth-and-death chain having $\kappa_0=0$ and transition probabilities determined by
    \[
        \P[\kt_{t+1} =\kt_t - 1  \mid \kt_t ]=\left( \frac{1-q^{\kt_t}}{1-q^h} \right)^2  \ \text{and} \ \
        \P[\kt_{t+1} =\kt_t + 1  \mid \kt_t]=\frac{q^{\kt_t}-q^h}{1-q^h}  \frac{q^{\kt_t+1}-q^h}{1-q^h}.
  \]
        Observe that $0\le\kt_t\le h$ for all $t$, almost surely.
        Let $\tilde \kappa$ be monotonically coupled with $\kappa$.
        It is not hard to check that the pair $(\tilde \kappa, \kappa)$ satisfies the analogous inequalities of $\eqref{eq:kappa_kh_ineqs}$ for $t \le
        n-h$, which implies, by following the proof of
        Proposition~\ref{arc_chain_monotonicity}, that $\tilde \kappa_t\leq \kappa_t$ for all $t\leq n-h$, almost surely.

        By applying the convergence theorem for finite-state Markov chains we obtain that  $\tilde \kappa$ has stationary distribution $\tilde \nu = \tilde \nu_h$
       and that $\tilde \kappa_t$ converges to $\tilde \nu$ in distribution. Since $\kappa_t$ dominates $\tilde \kappa_t$ for all $t\leq n-h$, we obtain
        \[
            \tilde\nu[d,\infty] \leq \liminf \P[\kappa_t\geq d] \leq \limsup \P[\kappa_t\geq d] \leq\nu[d,\infty]
    \quad \text{for $d\geq 0$,}
        \]
        where the limits are taken for $n,t\to\infty$ with the restriction $n-t\geq
        h$. It remains to verify, using
        \eqref{stationary_measure_eq} and the fact that the ratios $\frac{\P[\kt_{t+1} =i  \mid \kt_t=i-1 ]}{\P[\kt_{t+1} =i - 1  \mid \kt_t=i]}$ increase with $h$, that $\tilde{\nu}_s\to\nu_s$
        as $h\to\infty$, for all $s$. Thus, $\tilde\nu[d,\infty]$ converges to $\nu[d,\infty]$ as $h\to\infty$, completing the proof of the proposition.
    \end{proof}

\begin{subsection}{The Hitting Time of Zero}\label{section_pmc}
The times in which the arc chain is at zero can be thought of as cut
points for the graph of the permutation in the sense that if
$\kappa_t=0$ then $\Gamma_{\pi}\subseteq \{x\le t, y\le t\}\cup\{x>
t, y>t\}$. This leads one to consider the evolution of the arc chain
as performing a sequence of excursions away from zero; a point of
view which will be useful for us in the regime that $q$ is bounded
away from $1$ since, as we now prove, the excursions tend to be
relatively short in this regime.

The recent work of Basu and Bhatnagar \cite{BB16} uses a similar
viewpoint in their analysis of the longest monotone subsequences in
a random Mallows permutation. There, a Markov chain related to our
$(\infty,q)$-arc chain is considered. While the two chains differ,
they share the same visit times to zero and the work \cite{BB16}
contains an analysis of the distribution of the return times to
zero, related to our discussion here.

The following theorem will be instrumental in the proofs of the
upper bounds of Theorem~\ref{cycle_length}, Theorem~\ref{cycle_var}
and Theorem~\ref{cycle_diam_theorem} in this regime.

\begin{theorem}\label{mallows_chain_return_time}
   Let $\kappa$ be an $(n,q)$-arc chain with $\kappa_0 = 0$ and let $0\leq s \leq n$.
     For any $\eps>0$ there exists a constant $c_\eps>0$ such that
    \[
        T_s:=\min\{t \ge s\mid \kappa_t=0\} \quad \text{satisfies}\quad\E[(T_s-s)^2]\leq  c_\eps\cdot q, \quad \text{for all $q\in(0,1-\eps)$.}
    \]
\end{theorem}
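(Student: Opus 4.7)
The plan is to monotonically couple $\kappa$ with an $(\infty,q)$-arc chain $\hat\kappa$ started from its stationary distribution $\nu$ given by \eqref{eq:bounding_chain_stat_dist}. Proposition~\ref{arc_chain_monotonicity} then yields $\kappa_t\le\hat\kappa_t$ for all $0\le t\le n$, and since $\hat\kappa$ is time-homogeneous and begun in equilibrium, $\hat\kappa_u\sim\nu$ for every $u\ge 0$. Setting $\hat R:=\min\{j\ge 0:\hat\kappa_{s+j}=0\}$, we have $T_s-s\le\hat R$, and the law of $\hat R$ does not depend on $s$. Letting $\hat T_k$ denote the first hitting time of $0$ for $\hat\kappa$ started at $k$, conditioning on $\hat\kappa_s$ gives
\[
\E[(T_s-s)^2]\ \le\ \E[\hat R^2]\ =\ \sum_{k\ge 1}\nu_k\,\E[\hat T_k^2],
\]
so the goal is to bound this sum by $c_\eps q$.

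I would carry out three ingredients. First, from $\nu_k=\nu_0\,q^{k^2}/\prod_{i=1}^k(1-q^i)^2$ together with the uniform lower bound $\prod_{i=1}^\infty(1-q^i)\ge m_\eps>0$ for $q\le 1-\eps$ and the two-sided bound $c_\eps\le\nu_0\le 1$, one obtains $c_\eps\,q^{k^2}\le\nu_k\le C_\eps\,q^{k^2}$. Second, the strong Markov property decomposes $\hat T_k=\tau_k+\tau_{k-1}+\cdots+\tau_1$ with the $\tau_i$ independent, where $\tau_i$ is the time to descend from $i$ to $i-1$. The standard birth-and-death identity (derived by iterating the one-step equation $\E[\tau_i]=1/v_i+(u_{i+1}/v_i)\E[\tau_{i+1}]$ and applying detailed balance) gives the closed form $\E[\tau_i]=\nu[i,\infty]/(v_i\nu_i)$. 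Plugging in $v_i=(1-q^i)^2\ge\eps^2$, the tail bound $\nu[i,\infty]\le C_\eps\,q^{i^2}/(1-q)$, and the lower bound $\nu_i\ge c_\eps q^{i^2}$, the $q^{i^2}$ factors cancel and $\E[\tau_i]\le C'_\eps$ uniformly in $i\ge 1$. Third, a parallel one-step computation reveals
\[
\E[\tau_i^2]\ =\ 2(\E[\tau_i])^2-\tfrac{1}{v_i}+\tfrac{u_{i+1}}{v_i}\E[\tau_{i+1}^2].
\]
Since $u_{i+1}/v_i=q^{2i+1}/(1-q^i)^2\to 0$ as $i\to\infty$, there is a threshold $i^\star=i^\star(\eps)$ beyond which $u_{i+1}/v_i\le 1/2$ and the recursion is a contraction, yielding a uniform bound $\E[\tau_i^2]\le D_\eps$ for $i\ge i^\star$; for $i<i^\star$ the recursion is iterated only finitely many times (since $i^\star$ is bounded by a constant depending only on $\eps$) to extend this to all $i\ge 1$, giving $\E[\tau_i^2]\le C''_\eps$ uniformly. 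Independence of the $\tau_i$ then produces
\[
\E[\hat T_k^2]\ =\ \sum_{i=1}^k\mathrm{Var}(\tau_i)+\Big(\sum_{i=1}^k\E[\tau_i]\Big)^2\ \le\ kC''_\eps+k^2(C'_\eps)^2\ \le\ C'''_\eps k^2.
\]

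Putting the pieces together, $\E[(T_s-s)^2]\le C_\eps C'''_\eps\sum_{k\ge 1}k^2q^{k^2}$. Since $k^2\ge k$ for $k\ge 1$ gives $q^{k^2}\le q\cdot q^{k-1}$, the sum is at most $q\sum_{k\ge 1}k^2q^{k-1}=q(1+q)/(1-q)^3\le 2q/\eps^3$, so $\E[(T_s-s)^2]\le c_\eps q$ as claimed. The main obstacle I anticipate is the third ingredient: for small $i$ and $q$ close to $1-\eps$, the chain at state $i$ can have positive local drift (the up-probability $u_{i+1}$ exceeds the down-probability $v_i$), so the recursion for $\E[\tau_i^2]$ is not an immediate contraction, and one must combine the eventual contraction at $i\ge i^\star$ with careful iterative control over the finitely many small states $i<i^\star$, tracking constants to ensure uniformity in $q\in(0,1-\eps)$.
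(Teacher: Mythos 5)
Your proposal follows the same overall route as the paper: couple $\kappa$ with a bounding $(\infty,q)$-arc chain started from the stationary distribution $\nu$ of \eqref{eq:bounding_chain_stat_dist}, reduce to $\E[(T_s-s)^2]\le\sum_k\nu_k\,\E^k[T^2]$, prove $\E^k[T^2]\lesssim_\eps k^2$, and finish with tail bounds on $\nu$. The differences are in the two technical ingredients. For the hitting-time moment bound, the paper (Proposition~\ref{prop:arc_chain_T_bounds} via Claim~\ref{cl:tau_bound}) uses exponential supermartingales ($\E^k[3^{\tau_{4\xi}}]\le 9^{k-4\xi}$) plus an inductive exponential-moment argument for the low states and Cauchy--Schwarz; you instead use the explicit birth-and-death identity $\E[\tau_i]=\nu[i,\infty]/(v_i\nu_i)$ and a second-moment recursion (which I checked is correct: $s_i=2m_i^2-1/v_i+(u_{i+1}/v_i)s_{i+1}$). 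For extracting the factor of $q$, the paper splits into $q$ bounded below (where a uniform bound suffices) and $q\le\frac14$ (where Lemma~\ref{lem:stationary_distribution_bounds} gives $\nu_x\le 4q/2^x$); your two-sided estimate $\nu_k\approx_\eps q^{k^2}$ handles both cases at once and is arguably cleaner. One point you should shore up: the recursion for $\E[\tau_i^2]$ propagates \emph{upward} to infinity, so "iterating the contraction" for $i\ge i^\star$ is only legitimate once you know the remainder term $\prod_j (u_{j+1}/v_j)\cdot\E[\tau_{i+N}^2]$ vanishes, which presupposes finiteness and sub-exponential growth of $\E[\tau_j^2]$ in $j$. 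This is fixable by the standard device the paper itself uses — work with $\tau_i\wedge t$ (or the chain reflected at a finite level), derive the bound uniformly in the truncation, and pass to the limit by monotone convergence — but as written your argument assumes the conclusion of that step. Your stated worry about the small states $i<i^\star$ is, by contrast, not a real obstacle: finitely many iterations with multipliers bounded by $(1-\eps)/\eps^2$ suffice there, exactly as you describe.
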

This theorem is a consequence of the following two statements.
\begin{proposition}\label{prop:arc_chain_T_bounds}
    There exists a monotone non-decreasing function $f\colon(0,1)\to[0,\infty)$ such that the following holds. Let $k\geq 0$ and let $\kh$ be an $(\infty,q)$-arc chain with $\kh_0 = k$. Then
    \[
         T:=\min \{t\geq 0\mid \kh_t = 0\} \quad \text{satisfies}\quad \E[T^2]\leq f(q) \cdot  k^2, \quad \text{for all $k\geq 0$}.
    \]
\end{proposition}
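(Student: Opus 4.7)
The plan is to decompose the hitting time $T$ according to successive level crossings of the chain and then to bound the mean and second moment of each crossing time uniformly in the level. For $j\ge 1$ let $T_j$ denote the time it takes an $(\infty,q)$-arc chain started at $j$ to first reach $j-1$, and set $m_j:=\E[T_j]$, $s_j:=\E[T_j^2]$. Applying the strong Markov property at the successive hitting times of the levels $k-1,k-2,\ldots,0$, starting from $\kh_0=k$, one writes $T\stackrel{d}{=}\tau_k+\tau_{k-1}+\cdots+\tau_1$ with $\tau_1,\ldots,\tau_k$ independent and $\tau_j\stackrel{d}{=}T_j$. It therefore suffices to produce uniform bounds $m_j\le M(q)$ and $s_j\le V(q)$ for all $j\ge 1$ with $M,V\colon(0,1)\to[0,\infty)$ monotone non-decreasing: once these are in hand, independence yields
\[
\E[T^{2}]=\sum_{j=1}^{k}\var(\tau_j)+\Big(\sum_{j=1}^{k}m_j\Big)^{2}\le kV(q)+k^{2}M(q)^{2}\le \bigl(V(q)+M(q)^{2}\bigr)k^{2},
\]
so $f(q):=V(q)+M(q)^{2}$ will do (the case $k=0$ being trivial).

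Writing $p_d(j)=(1-q^{j})^{2}$, $p_u(j)=q^{2j+1}$ for the transition probabilities in \eqref{eq:infinite_arc_chain_transition_prob}, and $r_j:=p_u(j)/p_d(j)$, a first-step analysis for $T_j$ produces the recurrences
\[
m_j=\frac{1}{p_d(j)}+r_j\,m_{j+1},\qquad s_j=2m_j^{2}-\frac{1}{p_d(j)}+r_j\,s_{j+1},
\]
the second obtained from the raw first-step expansion for $s_j$ after substituting $p_u(j)\,m_{j+1}=p_d(j)\,m_j-1$ to cancel the lower-order terms. Iterating these (whose validity can be secured a posteriori from the uniform bounds below, or directly from a standard tail estimate for hitting times in a positive-recurrent birth-and-death chain) expresses $m_j$ and $s_j$ as series in the products $\prod_{i=0}^{m-1}r_{j+i}$.

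The crucial estimate is the telescoping bound
\[
\prod_{i=0}^{m-1}r_{j+i}=\frac{q^{m(2j+m)}}{\prod_{i=0}^{m-1}(1-q^{j+i})^{2}}\le\frac{q^{m^{2}}}{(1-q)^{2m}}\qquad\text{for }j\ge 1,
\]
using $(1-q^{j+i})\ge(1-q)$. The series $S(q):=\sum_{m\ge 0}q^{m^{2}}/(1-q)^{2m}$ is finite for every $q<1$ because the Gaussian factor $q^{m^{2}}$ dominates any exponential $(1-q)^{-2m}$, and is manifestly non-decreasing in $q$. Combining with $p_d(j+m)\ge(1-q)^{2}$ gives $m_j\le M(q):=S(q)/(1-q)^{2}$, and substituting this bound into the iterated formula for $s_j$ (and dropping the negative $-1/p_d$ terms) gives $s_j\le V(q):=2M(q)^{2}\,S(q)$. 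The main obstacle is ensuring this uniform summability for $q$ close to $1$, where $r_j$ is not small at low states; the Gaussian decay $q^{m^{2}}$ coming from telescoping the products is precisely what rescues convergence.
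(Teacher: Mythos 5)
Your argument is correct, but it follows a genuinely different route from the paper's. The paper also reduces to successive downward level crossings, but it splits the state space at the level $4\xi$ and proves, in Claim~\ref{cl:tau_bound}, \emph{exponential} moment bounds: a supermartingale argument with $X_t=3^{2\kh_t-8\xi+t}$ controls the descent from high states, while a first-step generating-function recursion handles each of the finitely many levels below $4\xi$; the second moments are then extracted via Jensen and Cauchy--Schwarz, and the monotonicity of $f$ is obtained only at the very end, indirectly, from the monotone coupling of Proposition~\ref{arc_chain_monotonicity}. You instead write down the explicit birth-and-death hitting-time series for the mean and second moment of each crossing, and your key observation is the telescoped bound $\prod_{i=0}^{m-1}r_{j+i}\le q^{m^{2}}(1-q)^{-2m}$, whose Gaussian decay gives bounds on $m_j$ and $s_j$ that are \emph{uniform in the level $j$} — no case split is needed, the constants $M(q),V(q)$ are explicit, and the monotonicity of $f(q)=V(q)+M(q)^2$ is visible termwise rather than deduced from a coupling. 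I verified your second-moment recurrence $s_j=2m_j^2-1/p_d(j)+r_js_{j+1}$ (the cross terms do cancel after substituting $p_u(j)m_{j+1}=p_d(j)m_j-1$), and the final assembly $\E[T^2]\le kV(q)+k^2M(q)^2$ is correct. The one point you gloss over — that the forward iteration of the recurrences is legitimate, i.e.\ that $m_j,s_j$ are finite and the remainder term $\bigl(\prod_{i=0}^{M-1}r_{j+i}\bigr)m_{j+M}$ vanishes — does need a word: the clean fix is to run the same first-step inequality for the truncated quantities $\E[T_j\wedge t]$ (which are trivially finite), iterate to get $\E[T_j\wedge t]\le\sum_{m\ge0}\bigl(\prod_{i<m}r_{j+i}\bigr)/p_d(j+m)$ uniformly in $t$, and let $t\to\infty$ by monotone convergence. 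With that routine addition your proof is complete, and arguably more quantitative than the paper's.
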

Recall the definition of $\xi$ from \eqref{eq:xi_def}.
\begin{lemma}\label{lem:stationary_distribution_bounds}
    Let $\nu$ be the stationary distribution of the $(\infty,q)$-arc chain. One has
    \begin{align}
        &\nu_x\leq 2^{2\xi-x}, &\text{for $q\in(0,1)$,} \label{eq:stationary_dist_low_bounds_1}\\
    &\nu_x\leq \frac{4q}{2^x},  &\text{for $x>0$ and $q\in(0,\tfrac{1}{4})$.}  \label{eq:stationary_dist_low_bounds_2}
    \end{align}
\end{lemma}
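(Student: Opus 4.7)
The starting point for both inequalities is the explicit product formula \eqref{eq:bounding_chain_stat_dist}, which gives $\nu_x \leq \prod_{i=1}^x w_i$ with $w_i = q^{2i-1}/(1-q^i)^2$. Both bounds reduce to estimating this product appropriately in each regime.

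For \eqref{eq:stationary_dist_low_bounds_1} I would split on whether $x \leq 2\xi$ or $x \geq 2\xi+1$. The first case is immediate because $\nu_x$ is a probability and $2^{2\xi-x} \geq 1$. For $x \geq 2\xi+1$ I would recycle the intermediate estimate $\nu_{\xi+s+1} \leq q^{s(s+1)}$ already extracted in the proof of Theorem~\ref{mallows_chain_bounds}, applied with $s := x - \xi - 1 \geq \xi$. The remaining task is then the algebraic inequality $q^{s(s+1)} \leq 2^{\xi-s-1}$. Using $q^\xi \leq 1/2$, i.e.\ $\log_2(1/q) \geq 1/\xi$, and taking logs base $2$, this reduces to a quadratic inequality in $s$ and $\xi$ (of the form $(s+1)(s-\xi) + \xi^2 \geq 0$) that is trivially true for $s \geq \xi$.

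For \eqref{eq:stationary_dist_low_bounds_2} I would exploit the additional assumption $q \leq 1/4$ to get a uniform denominator estimate: $1 - q^i \geq 1 - q \geq 3/4$ for every $i \geq 1$, so $w_i \leq (16/9)\, q^{2i-1}$ and therefore $\nu_x \leq (16/9)^x q^{x^2}$. To produce the single factor of $q$ required on the right-hand side, I would factor $q^{x^2} = q \cdot q^{x^2-1}$ and use $x^2 - 1 \geq x - 1$ (valid for $x \geq 1$) together with $q \leq 1/4$ to bound $q^{x^2-1} \leq 4^{1-x}$. The resulting bound $\nu_x \leq 4q\,(4/9)^x$ implies the desired $4q/2^x$ because $4/9 < 1/2$.

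Neither part seems likely to pose a genuine obstacle, as both reduce to elementary manipulations of a product of geometric factors; in particular, no new probabilistic input beyond the formula for $\nu$ is needed. The only mild point to watch is the placement of the split in \eqref{eq:stationary_dist_low_bounds_1}: one has to wait until $x = 2\xi$ before the Gaussian-looking tail $q^{s(s+1)}$ has accumulated enough decay to overpower the purely exponential target $2^{\xi-x}$, and splitting earlier (for instance at $x = \xi$) would leave the inequality with no slack.
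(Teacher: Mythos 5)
Your proof is correct and follows essentially the same route as the paper: both inequalities are extracted from the product formula \eqref{eq:bounding_chain_stat_dist} by bounding the ratios $u_i/v_i$, with the trivial case $x\le 2\xi$ handled by $\nu_x\le 1$. The only (cosmetic) difference is in \eqref{eq:stationary_dist_low_bounds_1}, where the paper bounds each ratio directly by $u_i/v_i\le q^{\xi}\le\tfrac12$ for $i>2\xi$ rather than passing through the quadratic-exponent estimate $q^{s^2+s}$ and the algebraic comparison you describe; both work.
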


\begin{proof}[Proof of Theorem \ref{mallows_chain_return_time}]
    Let $\kh$ be a bounding chain of $\kappa$ with $\kh_0$ having the stationary distribution of the $(\infty,q)$-arc chain.
     Define $ \hat T_s$ by
    \[
         \hat T_s:= \min\{t\ge s \mid \kh_t = 0\}.
    \]
        Proposition~\ref{arc_chain_monotonicity} implies that $T_s \leq \hat T_s$. Hence it suffices to prove that there exists $c_\eps>0$ such that
    \[
        \E[(\hat T_s-s)^2]\leq c_\eps \quad \text{for $q\in(0,1-\eps)$,}\quad \text{and}\quad
        \E[(\hat T_s-s)^2]\lesssim q \quad\text{for $q\in(0,\tfrac{1}{4})$.}
    \]
     As $\kh$ is a time-homogeneous Markov chain, Proposition~\ref{prop:arc_chain_T_bounds}
     implies that
    \[
        \E[(\hat T_s-s)^2 \mid \kh_s] \leq f(q) \cdot \kh_s^2.
    \]
    By taking expectations we obtain
    \begin{equation}\label{tau_eq_3}
        \E[(\hat T_s-s)^2] \leq f(q)\cdot \sum_{k > 0} k^2\cdot \nu_k
    \end{equation}
        Using the geometric bounds on $\nu_s$ provided in \eqref{eq:stationary_dist_low_bounds_1}
        and the fact that $f$ is a monotone non-decreasing function,
        it follows that the right-hand side in \eqref{tau_eq_3} is uniformly bounded for $q\in(0,1-\eps)$.

        Now consider the case $q\leq\frac{1}{4}$. By applying \eqref{eq:stationary_dist_low_bounds_2} to \eqref{tau_eq_3} we
        conclude that
        \[
                \E[(\hat T_s -s)^2]\leq f(\tfrac{1}{4}) \cdot \sum_{k>0}\nu_k\cdot k^2 \leq 4 q \cdot  f(\tfrac{1}{4}) \cdot \sum_{k>0} \frac{k^2}{2^k} \lesssim q. \qedhere
        \]
\end{proof}

Now we need only prove Lemma~\ref{lem:stationary_distribution_bounds} and Proposition~\ref{prop:arc_chain_T_bounds}.
\begin{proof}[Proof of Lemma~\ref{lem:stationary_distribution_bounds}]
  Let $u_i$ and $v_i$ be as in \eqref{bound_chain_transitions}.
    As $u_i$ is monotone decreasing in $i$ and $v_i$ is monotone increasing in $i$ it follows that for $i> 2\xi$ one has
        \[
            \frac{u_i}{v_i} \leq \frac{u_{2\xi + 1}}{v_{\xi}}\leq q^{\xi}\cdot \frac{u_{\xi+1}}{v_{\xi}} \leq  q^{\xi} \leq
            \frac{1}{2},
        \]
        where we have used that $u_{\xi+1} \leq \frac{1}{4}$ and $v_{\xi}\geq \frac{1}{4}$. Thus, using \eqref{stationary_measure_eq}, we obtain
    \begin{equation}\label{pmc_eq_1}
        \nu_x = \nu_{2\xi}\cdot \prod_{i=2\xi+1}^{x}\frac{u_i}{v_i} \leq \nu_{2\xi}\cdot 2^{2\xi-x}\leq 2^{2\xi-x} \quad \text{for $x>2\xi$}.
    \end{equation}
    This completes the proof of \eqref{eq:stationary_dist_low_bounds_1}. We proceed with the proof of \eqref{eq:stationary_dist_low_bounds_2} and assume $q\leq \frac{1}{4}$.
    In this case one may verify that $\frac{u_s}{v_s}\leq \frac{1}{2}$ for $s>1$, while $\frac{u_1}{v_1}\leq 2 q$.
   Similarly as we obtained \eqref{pmc_eq_1} we conclude
    \[
        \nu_x\leq \frac{q}{2^{x-2}}\nu_0\leq  \frac{q}{2^{x-2}}, \quad \text{for $x>0$.} \qedhere
    \]
\end{proof}
We proceed to prove Proposition~\ref{prop:arc_chain_T_bounds}. Let
$\kh$ be an $(\infty,q)$-arc chain. We shall write $\E^k$ to denote
the expectation under the measure where $\kh_0 = k$. We also define
the stopping times,
\begin{equation*}
  \tau_i = \min\{t\ge 0\mid \kh_t \le i\},\quad i\ge 0.
\end{equation*}
\begin{claim}\label{cl:tau_bound} If $k>4\xi$ then $\E^k[3^{\tau_{4\xi}}] \leq
9^{k-4\xi}$ and if $1\le k\le 4\xi$ then
$\E^k[\lambda_{k,q}^{\tau_{k-1}}]
    <\infty$ for some $\lambda_{k,q}>1$.
\end{claim}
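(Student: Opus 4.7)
The plan is to handle the two parts separately, each by an exponential Lyapunov / supermartingale argument.

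\textbf{First part ($k > 4\xi$).} I would introduce the Lyapunov function $g(j) := 9^{j-4\xi}$ for $j\ge 4\xi$ and verify the one-step drift
\[
    \E[g(\kh_{t+1}) \mid \kh_t = j] = g(j)\Bigl[\tfrac{1}{9}(1-q^j)^2 + (2q^j - q^{2j}(1+q)) + 9q^{2j+1}\Bigr] \leq \tfrac{1}{3}\,g(j)
\]
for every $j>4\xi$. Algebraically this reduces to showing $\tfrac{8}{9}(1-q^j)^2 - 8q^{2j+1}\ge \tfrac{2}{3}$, which follows from the crude estimate $q^j\le q^{4\xi+1}\le 1/16$ that is in turn guaranteed by the defining inequality $q^\xi\le \tfrac{1}{2}$. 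Consequently $M_t := 3^{t\wedge \tau_{4\xi}}\,g(\kh_{t\wedge \tau_{4\xi}})$ is a non-negative $(\CF_t)$-supermartingale. Because $\kh$ has unit jumps, $\kh_{\tau_{4\xi}} = 4\xi$ exactly on $\{\tau_{4\xi}<\infty\}$, so $g(\kh_{\tau_{4\xi}}) = 1$ there. Applying Fatou's lemma to $M_t$ then yields both that $\tau_{4\xi}<\infty$ almost surely and that $\E^k[3^{\tau_{4\xi}}] \le g(k) = 9^{k-4\xi}$.

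\textbf{Second part ($1\le k\le 4\xi$).} I would argue by downward induction on $k$, the base case being $k=4\xi+1$, which is already delivered by the first part (it gives $\E^{4\xi+1}[3^{\tau_{4\xi}}] \le 9$). For $j\ge 1$ let $T^{(j)}$ denote the hitting time of $j-1$ starting from $j$ and set $\phi_j(\lambda) := \E^j[\lambda^{T^{(j)}}]$. A one-step analysis at state $j$, combined with the strong Markov property applied to the excursion that first steps up to $j+1$ and later returns to $j$, produces
\[
    \phi_j(\lambda) = \lambda\Bigl[(1-q^j)^2 + \bigl(2q^j - q^{2j}(1+q)\bigr)\phi_j(\lambda) + q^{2j+1}\,\phi_{j+1}(\lambda)\,\phi_j(\lambda)\Bigr],
\]
and hence
\[
    \phi_j(\lambda) = \frac{\lambda(1-q^j)^2}{1 - \lambda\bigl(2q^j - q^{2j}(1+q)\bigr) - \lambda q^{2j+1}\phi_{j+1}(\lambda)}.
\]
At $\lambda=1$ the denominator simplifies to $(1-q^j)^2>0$, using the identity $\phi_{j+1}(1)=1$ which follows from the positive recurrence of $\kh$ (itself a consequence of the existence of the stationary distribution $\nu$). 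By the inductive hypothesis $\phi_{j+1}$ is a convergent power series on a right-neighborhood of $1$, so by continuity the denominator remains positive for some $\lambda_{j,q}>1$. Iterating the construction from $j=4\xi$ down to $j=k$ produces the required $\lambda_{k,q}>1$ with $\phi_k(\lambda_{k,q})<\infty$.

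The main technical point is the drift inequality in the first part, which is essentially a numerical verification that hinges on $q^j\le 1/16$ for $j>4\xi$. Everything in the second part is soft: it just propagates the finiteness of $\phi_{j+1}$ through a single-variable fixed-point equation whose denominator is strictly positive at $\lambda=1$.
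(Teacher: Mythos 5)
Your proposal is correct and follows essentially the same route as the paper: the first part uses the identical Lyapunov function (your $3^{t\wedge\tau_{4\xi}}\,9^{\kh_{t\wedge\tau_{4\xi}}-4\xi}$ is the paper's $X_{t\wedge\tau_{4\xi}}=3^{2\kh_{t\wedge\tau_{4\xi}}-8\xi+(t\wedge\tau_{4\xi})}$) with the same drift verification via $q^{4\xi}\le \tfrac{1}{16}$, and the second part is the same downward induction via first-step analysis and the strong Markov property, choosing $\lambda>1$ so that the resulting denominator stays positive. The one caveat is that solving your fixed-point equation for $\phi_j(\lambda)$ presupposes $\phi_j(\lambda)<\infty$; the paper avoids this circularity by running the same identity for the truncated quantities $\E^k[\lambda^{\tau_{k-1}\wedge t}]$, obtaining a bound uniform in $t$, and then letting $t\to\infty$ — a one-line repair you should incorporate.
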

\begin{proof}
    Suppose first that $k>4\xi$. Consider the random sequence $X_t:=3^{2\kh_t-8\xi+t}$ and note that $X_0 = 9^{k-4\xi}$. Let $u_i$ and $v_i$ be as in \eqref{bound_chain_transitions} with $v_0:=0$.           The sequence $(X_t)$ satisfies
    \[
        \E^k[X_{t+1}\mid \kh_t = i] = (27\cdot u_{i+1} +3 \cdot (1-u_{i+1}-v_i) + \tfrac{1}{3} v_i) \cdot X_t.
    \]
    One may verify that for all $i > 4\xi$ one has $27\cdot u_{i+1} +3 \cdot (1-u_{i+1}-v_i) + \tfrac{1}{3} v_i \leq \frac{3}{4}$.
    This implies that \[\E^k[X_{t+1}\mid \kh_t=i] \leq \tfrac{3}{4} X_t\quad \text{for all $i > 4\xi$.}\]
    Denoting $t\wedge\tau_{4\xi} = \min\{t,\tau_{4\xi}\}$ and using the facts that $\tau_{4\xi}$ is a stopping time for $(X_t)$ and $\kh_t>4\xi$ when $\tau_{4\xi} > t$, it follows that
    $\E^k[X_{t\wedge\tau_{4\xi}}]$ is non-increasing in $t$. As $3^{t\wedge \tau_{4\xi}}\le X_{t\wedge\tau_{4\xi}}$ we conclude by the monotone convergence theorem
    that
    \begin{equation*}
      \E^k [3^{\tau_{4\xi}}] = \lim_{t\to\infty} \E^k [3^{t\wedge\tau_{4\xi}}] \le
      \lim_{t\to\infty} \E^k[X_{t\wedge\tau_{4\xi}}]\le X_0 = 9^{k-4\xi}.
    \end{equation*}
    Now suppose that $1\le k\le 4\xi$. By induction and using the previous case we may assume that there exists
    some $\lambda_{k+1,q}>1$ for which
    \begin{equation}\label{eq:k_plus_one_lambda}
      \E^{k+1}[\lambda_{k+1,q}^{\tau_k}]<\infty.
    \end{equation}
    Let $t\ge 1$ and $1<\lambda<\lambda_{k+1,q}$. By conditioning on
    the first step of the Markov chain we have
    \begin{align}
      \E^k[\lambda^{\tau_{k-1}\wedge t}] &=
         v_k  \lambda +
            (1-u_{k+1}-v_k)  \E^k [\lambda^{\tau_{k-1}\wedge t} \mid \kh_1 = k] +
            u_{k+1}  \E^k [\lambda^{\tau_{k-1}\wedge t} \mid \kh_1 = k + 1] \nonumber
              \\
        &= v_k\lambda +
            (1-u_{k+1}-v_k)  \E^k[\lambda^{(1+\tau_{k-1})\wedge t}]
            +u_{k+1}  \E^{k+1}[\lambda^{(1+\tau_{k-1})\wedge t}].\label{eq:lambda_tau_bound}
    \end{align}
    Now observe that $\E^k[\lambda^{(1+\tau_{k-1})\wedge  t}]\le \lambda\cdot\E^k[\lambda^{\tau_{k-1}\wedge t}]$. In addition,
    \[
        \E^{k+1}[\lambda^{(1+\tau_{k-1})\wedge t}] \leq
        \lambda \cdot \E^{k+1}[\lambda ^{\tau_k+(\tau_{k-1}-\tau_k)\wedge t}] =
        \lambda\cdot\E^{k+1}[\lambda^{\tau_k}]\cdot\E^k[\lambda^{\tau_{k-1}\wedge t}]
    \]
    where in the last equality we used the strong Markov property
    and the fact that $\tau_k$ is almost surely finite under the measure where $\kh_0 = k+1$ by \eqref{eq:k_plus_one_lambda}.
    Substituting these two bounds into \eqref{eq:lambda_tau_bound} and rearranging the terms we conclude that
    \begin{equation*}
      \big(1 - (1-u_{k+1}-v_k) \cdot\lambda - u_{k+1} \cdot\lambda\cdot\E^{k+1}[\lambda^{\tau_k}] \big)\cdot \E^k[\lambda^{\tau_{k-1}\wedge t}] \leq v_k\cdot\lambda.
    \end{equation*}
    Thus, using \eqref{eq:k_plus_one_lambda}, we may pick $\lambda>1$
    sufficiently small to make the coefficient of $\E^k[\lambda^{\tau_{k-1}\wedge
    t}]$ positive. With this choice, we conclude that $\E^k[\lambda^{\tau_{k-1}\wedge
    t}]$ is bounded uniformly in $t$. Taking the limit $t\to\infty$
    finishes the proof.
\end{proof}
\begin{proof}[Proof of Proposition~\ref{prop:arc_chain_T_bounds}]
The proposition is trivial for $k=0$ so we assume that $k\ge 1$.
Observe that
\begin{equation*}
  T = T_{4\xi} + \sum_{i=0}^{4\xi - 1} (\tau_i - \tau_{i+1}).
\end{equation*}
The Cauchy-Schwartz inequality then implies that
\begin{equation*}
  T^2 \le (4\xi + 1)\Big(T_{4\xi}^2 + \sum_{i=0}^{4\xi - 1} (\tau_i -
  \tau_{i+1})^2 \Big).
\end{equation*}
If $i\ge k$ then $\tau_i =
  \tau_{i+1}$ whereas if $i<k$ then the strong Markov property
  and Claim~\ref{cl:tau_bound} imply that
  \begin{equation*}
    \E^k[(\tau_i -
  \tau_{i+1})^2] = \E^{i+1} [\tau_i^2] < \infty.
  \end{equation*}
  Similarly, if $4\xi\ge k$ then $T_{4\xi} = 0$ while if $4\xi<k$
  it follows from Claim~\ref{cl:tau_bound} and the fact that $\log_3^2 x$ is
  concave for $x\ge e$ that
  \begin{equation*}
    \E^k[\tau_{4\xi}^2] = \E^k[\log_3^2(3^{\tau_{4\xi}})]\le \log_3^2 \E^k[3^{\tau_{4\xi}}]\le g(q) k^2
  \end{equation*}
  for some $g(q)>0$. Combining all of the above facts we conclude
  that
  \begin{equation*}
    \E^k[T^2] \le g(q)(4\xi + 1)k^2 + (4\xi+1)\cdot4\xi\cdot\max_{0\le i\le 4\xi-1}\E^k[(\tau_i -
  \tau_{i+1})^2]\le h(q) k^2
  \end{equation*}
  for some $h(q)>0$. This bound implies a similar bound in which $h$
  is replaced by a monotone non-decreasing function $f$ as
  $\E^k[T^2]$ is a non-decreasing function of $q$ by
  Proposition~\ref{arc_chain_monotonicity}.
\end{proof}

\end{subsection}

\begin{subsection}{Induced Mallows Permutations and a Stitching Process}\label{ggm}
    Our discussion so far was based on the results of Section~\ref{sec:generating_the_graph}, describing the
    distribution of a portion of the graph $\Gamma_\pi$ of a
    Mallows permutation $\pi$ conditioned on the parts of the
    graph `to the left and below this portion'. In our proof of
    Theorem~\ref{thm:pdl}, pertaining to the Poisson-Dirichlet limit
    law, we will need to understand the distribution of portions of
    $\Gamma_\pi$ under more general conditioning events. Our first result in this section
    discusses the distribution of $\Gamma_\pi$ restricted to a rectangle, given the complementary part of $\Gamma_\pi$.
    As it turns out, in this case the relative ordering of the points of
    $\Gamma_\pi$ is itself distributed via a Mallows distribution.
    This is formulated precisely below.

    Given a finite set of points $\Gamma\subseteq\R^2$, no two of
    which have equal $x$ or equal $y$ coordinate, we define the
    \emph{relative order of $\Gamma$} as a permutation $\lambda$
    characterized by the following properties:
    \begin{equation}\label{eq:relative_order_permutation}
    \text{if $\Gamma = \{(x_1,y_1),\ldots,
    (x_k, y_k)\}$ with $x_1<\cdots < x_k$ then $\lambda\in\S_k$
    and $\lambda_i := |\Gamma \cap \{y \le y_i\}|$.}
    \end{equation}
    The name relative order stems from the fact that for each pair $i,j\in [k]$, one has $\lambda_i<\lambda_j$ if and
    only if $y_i < y_j$.
    \begin{lemma} \label{lem:ggm_subgraph}
            Let $R := \left\{x_1\leq x \leq x_2, \, y_1\leq y\leq y_2\right\}\subseteq [n]^2$ be
            a discrete rectangle.
            Let $\pi\sim\mu_{n,q}$ and $\Gamma_\pi$ be the graph of $\pi$.
            When $\Gamma_\pi \cap R$ is non-empty the relative order \eqref{eq:relative_order_permutation} of
    $\Gamma_\pi \cap R$, conditioned on $\Gamma_\pi\smallsetminus R$,
            has the Mallows distribution $\mu_{m,q}$ with $m := |\Gamma_\pi \cap R|$.
    \end{lemma}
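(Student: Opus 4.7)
The plan is to condition on $\Gamma_\pi\smallsetminus R$ and show that, under this conditioning, the probability that the internal bijection is encoded by a given $\sigma\in\S_m$ is proportional to $q^{\inv(\sigma)}$, which is exactly $\mu_{m,q}$.

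First, I would observe that $\Gamma_\pi\smallsetminus R$ determines two distinguished sets:
\[
    S:=\{s\in[x_1,x_2]\mid \pi_s\in[y_1,y_2]\}=\{s_1<\cdots<s_m\}, \quad J:=[y_1,y_2]\smallsetminus\pi([n]\smallsetminus S)=\{j_1<\cdots<j_m\}.
\]
Indeed, every $s\in[n]\smallsetminus S$ has $\pi_s$ readable from the outside data, while $S$ and $J$ are the remaining $x$- and $y$-coordinates inside the rectangle. The random object $\Gamma_\pi\cap R$ is thus a bijection $S\to J$, and the relative-order recipe \eqref{eq:relative_order_permutation} identifies such bijections with $\S_m$ via $\lambda$.

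Next, I would decompose
\[
    \inv(\pi)=\inv_{\text{out}}+\inv_{\text{in}}+\inv_{\text{mix}}
\]
according to whether the two points forming a pair both lie outside $R$, both lie inside $R$, or straddle the boundary. The summand $\inv_{\text{out}}$ is fixed by the conditioning. A direct unwinding of \eqref{eq:relative_order_permutation} gives $\inv_{\text{in}}=\inv(\lambda)$, since for $s_i<s_j$ in $S$ the internal inversion condition $\pi_{s_i}>\pi_{s_j}$ coincides with $\lambda_i>\lambda_j$. The crux is to show that $\inv_{\text{mix}}$ is also a function of the conditioning alone. For this I would fix an outside point $(t,\pi_t)$ and count its inversions with the inside points. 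Since $(t,\pi_t)\notin R$, at least one of $t\notin[x_1,x_2]$ or $\pi_t\notin[y_1,y_2]$ holds. In the first case the sign of $t-s$ is constant over $s\in S\subseteq[x_1,x_2]$, so the count reduces to $|\{j\in J\mid j<\pi_t\}|$ or $|\{j\in J\mid j>\pi_t\}|$ depending on that sign, a quantity that depends only on $J$ and $(t,\pi_t)$. The second case is symmetric, interchanging the roles of $S,J$ and of the two axes. Summing over all outside points shows that $\inv_{\text{mix}}$ is determined by $\Gamma_\pi\smallsetminus R$.

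Combining the three pieces, for every $\sigma\in\S_m$,
\[
    \P[\lambda=\sigma\mid\Gamma_\pi\smallsetminus R]\ \propto\ q^{\inv(\pi)}\ =\ q^{\inv_{\text{out}}+\inv_{\text{mix}}}\cdot q^{\inv(\sigma)}\ \propto\ q^{\inv(\sigma)},
\]
which is exactly $\mu_{m,q}$. I expect the main obstacle to be the case split controlling $\inv_{\text{mix}}$: it is elementary but one must carefully verify constancy in both subcases and check that the argument only invokes information already contained in the conditioning, without accidentally requiring the full knowledge of $\Gamma_\pi\cap R$.
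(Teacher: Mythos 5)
Your proposal is correct and follows essentially the same route as the paper's proof: the same three-way decomposition of $\inv(\pi)$ into outside, inside, and mixed pairs, with the mixed count shown to be determined by the conditioning via the same case split on which side of $R$ the outside point lies. The only cosmetic difference is that you phrase the mixed count in terms of the set $J$ of free $y$-coordinates, while the paper writes it as $n-u_{\y}$ minus a count over $\Gamma_\pi\smallsetminus R$; these are the same computation.
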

    \noindent The lemma generalizes the more familiar special case where $R$ is a vertical rectangle $\{x_1\leq x \leq x_2\}$ (see, for instance, \cite[Corollary 2.7]{BP13} for a
    proof of this special case). The original paper of Mallows \cite{M57} contained a discussion of related
    facts. In fact, the above lemma can be deduced from the special case, though we will provide a direct proof below.

    So far we have discussed methods for sampling the graph of a Mallows
    permutation iteratively `from beginning to end'. The above lemma gives rise to a method for sampling the
    graph `from a mid-point'. Indeed, one can consider,
    say, the top and bottom parts of the graph,
    \begin{equation}\label{eq:ggm_def_AB}
        A:=\Gamma_\pi \cap \{y\leq s\}\quad \text{and}\quad B:=\Gamma_\pi \cap \{y >
        s\}
    \end{equation}
    for a given $0\le s\le n$. Then, due to the lemma, the relative orders of $A$ and $B$ are independent
    and have distributions $\mu_{s,q}$ and $\mu_{n-s,q}$,
    respectively. We now provide a `stitching' procedure for
    determining the full permutation $\pi$ from the relative orders.
    Indeed, given the relative orders, one may determine the full permutation
    from the projections of $A$ and $B$ on the $x$-axis. Defining the process $(\chi_t)$, $0 \leq t \leq
    n$, by
    \begin{equation}\label{eq:chi_def}
       \chi_t := |\Gamma_\pi \cap \{x > t,y \le s\}|
    \end{equation}
    we note that the projection of $A$ on the $x$-axis is exactly the set of
    descents of $\chi$, $\left\{t\in[n]\mid
    \chi_{t}=\chi_{t-1}-1 \right\}$, and the projection of $B$ on the $x$-axis is the complementary set.
    Thus, the following lemma provides a computational procedure for determining these projections.
    \begin{lemma} \label{lem:ggm_subsequence}
        The process $\chi$ defined in \eqref{eq:chi_def} is a time-inhomogeneous Markov chain with transition
        probabilities given by
            \begin{equation}\label{ggm_eq1}
                    \P [\chi_{t+1} =\chi_t-1\mid \pi_1,\ldots,\pi_t]=\frac{1-q^{\chi_t}}{1-q^{n-t}} \quad \text{and}
                    \quad \P [\chi_{t+1} = \chi_t\mid \pi_1,\ldots,\pi_t]=\frac{q^{\chi_t}-q^{n-t}}{1-q^{n-t}}.
            \end{equation}
    \end{lemma}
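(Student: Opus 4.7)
The proof should follow almost directly from the one-step sampling formula \eqref{eq:sampling_formula}. Here is the plan.

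First I would observe that $\chi_t$ can only decrease or stay the same as $t$ increases, and that the direction is dictated entirely by the single new value $\pi_{t+1}$: namely,
\[
  \chi_{t+1} - \chi_t = -\mathbf{1}\{\pi_{t+1}\le s\},
\]
since passing from $t$ to $t+1$ removes from the count exactly one column, $x=t+1$, which contributes $1$ to $\chi_t$ iff $\pi_{t+1}\le s$. So everything reduces to computing $\P[\pi_{t+1}\le s\mid \pi_1,\ldots,\pi_t]$.

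Next I would use the key identification of $\chi_t$ with a simple quantity derivable from $\pi_1,\ldots,\pi_t$. Since $\{\pi_i : i>t\} = [n]\smallsetminus\{\pi_1,\ldots,\pi_t\}$, we have
\[
  \chi_t = |\{i>t : \pi_i\le s\}| = |[s]\smallsetminus\{\pi_1,\ldots,\pi_t\}|.
\]
Writing the remaining values in increasing order as $j_1<\cdots<j_{n-t}$, this says precisely that $j_k\le s$ iff $k\le\chi_t$. Plugging into \eqref{eq:sampling_formula} and summing the geometric progression,
\[
  \P[\pi_{t+1}\le s\mid \pi_1,\ldots,\pi_t] = \sum_{k=1}^{\chi_t}\frac{1-q}{1-q^{n-t}}\cdot q^{k-1} = \frac{1-q^{\chi_t}}{1-q^{n-t}}.
\]
This is the first formula in \eqref{ggm_eq1}, and the second follows from $\chi_{t+1}\in\{\chi_t-1,\chi_t\}$.

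Finally, for the Markov property: I just showed the conditional probabilities given $(\pi_1,\ldots,\pi_t)$ depend on the past only through $\chi_t$. Since $\chi_0,\ldots,\chi_t$ are measurable with respect to $\sigma(\pi_1,\ldots,\pi_t)$, the tower property delivers the same transition probabilities conditional on $(\chi_0,\ldots,\chi_t)$, which is exactly the assertion that $(\chi_t)$ is a time-inhomogeneous Markov chain with the stated transitions. There is no serious obstacle here; the only thing to be careful about is the bookkeeping identifying $\chi_t$ with $|[s]\smallsetminus\{\pi_1,\ldots,\pi_t\}|$ so that the count $k$ appearing in the sampling formula is precisely $\chi_t$.
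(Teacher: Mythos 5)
Your proof is correct and follows essentially the same route as the paper's: both reduce to the observation that $j_k\le s$ iff $k\le\chi_t$ and then sum the geometric progression from \eqref{eq:sampling_formula}, with the second transition probability following from $\chi_t-\chi_{t+1}\in\{0,1\}$.
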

    We remark that the formulas \eqref{ggm_eq1} provide more than the
    transition probabilities of $\chi$; namely, that these
    probabilities remain the same even when conditioning on $\pi_1,\ldots, \pi_t$.

    Putting together the above two lemmas we obtain the following
    method for sampling a Mallows permutation $\pi$. Let $0\le s\le
    n$ and let $A$ and $B$ be as in \eqref{eq:ggm_def_AB}. Observe
    that, according to Lemma~\ref{lem:ggm_subgraph}, the relative order of $A$ is
    independent of $\chi$, as $\chi$ is determined by
    $B$. Similarly, the relative order of $B$ is independent of both
    $\chi$ and $A$. Thus we may sample $\pi$ by independently sampling
    $\chi$, the relative order of $A$ and the relative order of $B$. The sampling of $\chi$ can be performed using the transition probabilities given in
    Lemma~\ref{lem:ggm_subsequence}. The usual sampling algorithm \eqref{eq:sampling_formula} may then be used to sample the
    relative orders of $A$ and $B$.

    {\bf The Infinite Case:} Gnedin and Olshanski defined an infinite two-sided Mallows
    distribution as the unique
    $q$-exchangeable measure on one-to-one and onto $\pi\colon
    \Z\to\Z$, see \cite{GO12} for details. A method for sampling from this distribution was given in \cite{GO12}.
    Although the infinite two-sided Mallows distribution is not used in our work, we note here that the above sampling
    algorithm may be extended to produce another sampling method for it.

    We do not define the infinite two-sided Mallows distribution and shall rely only on the fact, proved in \cite[Proposition 7.6]{GO12},
    that this distribution is the limit of the distributions of finite
    Mallows permutations in a suitable sense. To give precise
    meaning to this let us extend the definition of the finite
    Mallows permutation to arbitrary finite, non-empty, intervals
    $I\subset\Z$ by saying that $\pi\sim\mu_{I,q}$ if $\pi:I\to I$
    is a bijection and $\P[\pi]$ is proportional to $q^{\inv(\pi)}$
    as in \eqref{eq:mu_n_q_def}. This is the same as saying that $P_I^{-1}\circ\pi\circ
    P_I\sim\mu_{|I|,q}$ where $P_I$ is the unique
    increasing bijection from $[|I|]$ to $I$. We view bijections
    $\pi:\Z\to\Z$ as elements of $\Z^\Z$ with the product topology
    and identify each bijection $\pi:I\to
    I$ with a bijection $\pi:\Z\to\Z$ by setting $\pi(i)=i$ for
    $i\notin\Z$.
    \begin{fact}[\protect{\cite[Proposition
    7.6]{GO12}}]\label{fact:approximation_prop}
      Let $(I_n)$ be an arbitrary sequence of finite, non-empty, intervals increasing
      to $\Z$ and let $\pi^{(n)}\sim\mu_{I_n,q}$. Then $\pi^{(n)}$ converges
      in distribution to the infinite two-sided
      Mallows distribution.
    \end{fact}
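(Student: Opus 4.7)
Since this statement is cited as a fact from \cite{GO12}, I give only a proof sketch using the tools of this paper. Convergence in distribution on $\Z^\Z$ under the product topology is equivalent to convergence of all finite-dimensional marginals, so it suffices to fix a finite $J\subset\Z$ and establish convergence of the joint law of $(\pi^{(n)}(j))_{j\in J}$.

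The plan has two steps: tightness and identification. For tightness, the displacement bounds \eqref{eq:displacements}, sharpened by the exponential tail estimates from \cite{BP13} cited after Figure~\ref{fig:mallows_sample}, show that for every $j\in\Z$, once $j\in I_n$, the random variable $\pi^{(n)}(j)-j$ is stochastically dominated, uniformly in $n$, by a geometric-type random variable with parameters depending only on $q$. Applying the same bound to $(\pi^{(n)})^{-1}$ via the inverse symmetry \eqref{inv_symmetry} controls preimages of any fixed value as well, so the sequence $(\pi^{(n)})$ is tight in $\Z^\Z$ and admits subsequential limits.

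To identify any such limit $\mu^*$ with the infinite two-sided Mallows distribution, I would verify that $\mu^*$ satisfies the $q$-exchangeability property that characterizes this distribution uniquely in \cite{GO12}. The starting point is the local identity for the finite Mallows measure: for any adjacent transposition $\tau_i$ acting on the left and any $\pi\in\S_{I_n}$ with $i,i+1\in I_n$,
\[
\mu_{I_n,q}[\pi] = q^{\pm 1}\,\mu_{I_n,q}[\tau_i\circ\pi],
\]
with sign depending on whether the swap removes or creates an inversion at the pair of positions $\pi^{-1}(i),\pi^{-1}(i+1)$. Since this ratio depends only on the relative order of these two coordinates, tightness confines the relevant information to a bounded window, and the identity passes to the limit as a $q$-exchangeability identity on cylinder events of $\mu^*$. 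Uniqueness from \cite{GO12} then identifies $\mu^*$, and convergence of the full sequence follows.

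The main technical obstacle is precisely this passage to the limit: one needs to upgrade marginal convergence to joint convergence of cylinder probabilities, and to ensure that $\mu^*$ is concentrated on genuine bijections $\Z\to\Z$. Both are controlled by the uniform tightness of $\pi^{(n)}(j)$ and of $(\pi^{(n)})^{-1}(j)$ in any finite window: the former rules out escape of mass and gives joint convergence of cylinder probabilities on any finite index set, while the latter rules out collisions or lost values in the limit.
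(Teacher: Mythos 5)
The paper does not prove this statement at all: it is imported verbatim from \cite[Proposition 7.6]{GO12} and used as a black box (the authors explicitly say they ``rely only on the fact, proved in [GO12]''), so there is no internal argument to compare yours against. Your sketch is nonetheless a sound and fairly standard route: tightness of each coordinate of $\pi^{(n)}$ and of $(\pi^{(n)})^{-1}$ follows from the translation-invariant displacement bound \eqref{eq:displacements} (and the exponential tails of \cite{BP13}) uniformly in $n$, which in the countable product $\Z^\Z$ yields subsequential weak limits, forces injectivity and surjectivity of the limit, and gives joint convergence of cylinder probabilities; identification of the limit then reduces to the uniqueness of the $q$-exchangeable law on bijections of $\Z$, which is exactly how this paper describes the Gnedin--Olshanski characterization. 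Be aware that this outsources the deep content to the uniqueness theorem of \cite{GO12}, which is legitimate here (it is a separate result from their Proposition 7.6) but means your argument is a reduction rather than a self-contained proof.

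The thinnest step is the passage to the limit of the $q$-exchangeability identity. As written, $\mu_{I_n,q}[\pi]=q^{\pm1}\mu_{I_n,q}[\tau_i\circ\pi]$ is an identity between point masses, and the limiting measure is diffuse, so both sides tend to zero and the identity says nothing in the limit in that form. You must recast it as a statement about events: the push-forward of $\mu_{I_n,q}$ under $\pi\mapsto\tau_i\circ\pi$ has density $q^{\pm1}$ with respect to $\mu_{I_n,q}$ on the two events $\{\pi^{-1}(i)<\pi^{-1}(i+1)\}$ and its complement, and then show that these events, which are not cylinder events, are approximable by cylinder events uniformly in $n$ thanks to the tightness of $(\pi^{(n)})^{-1}(i)$ and $(\pi^{(n)})^{-1}(i+1)$. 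You gesture at this but do not carry it out; it is the one place where a careful write-up would require real work.
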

    We augment this with the following useful tightness property.
    \begin{claim}\label{cl:tightness_for_chi}
      Let $(I_n)$ be an arbitrary sequence of finite, non-empty, intervals increasing
      to $\Z$ and let $\pi^{(n)}\sim\mu_{I_n,q}$. Then
      \begin{equation*}
        \lim_{t\to \infty}\sup_n \P[\Gamma_{\pi^{(n)}}\cap \{x> t,
        y\le0\}\neq \emptyset] = 0.
      \end{equation*}
    \end{claim}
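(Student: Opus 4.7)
The plan is to combine a union bound with the exponential displacement tail for finite Mallows permutations from \cite[Theorem 1.1]{BP13}. Since $P_{I_n}^{-1}\circ\pi^{(n)}\circ P_{I_n}\sim\mu_{|I_n|,q}$ and the displacement $|\pi^{(n)}(s)-s|$ for $s\in I_n$ is invariant under this rescaling (it equals $|\tilde{\pi}_j-j|$ for some $j$ under $\tilde{\pi}\sim\mu_{|I_n|,q}$), the cited theorem will give constants $C=C(q)>0$ and $\rho=\rho(q)\in(0,1)$, depending only on $q$, with
\[
\P[|\pi^{(n)}(s)-s|\ge k]\le C\rho^k\quad\text{for all }k\ge 0,\ n\ge 1,\ s\in I_n.
\]

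Fix $t\ge 1$. For $s\notin I_n$ we have $\pi^{(n)}(s)=s>t>0$ by our identification convention, so such $s$ do not contribute to the event $\{x>t,\,y\le 0\}$. Hence
\[
\P[\Gamma_{\pi^{(n)}}\cap\{x>t,\,y\le 0\}\neq\emptyset]\le\sum_{\substack{s\in I_n\\ s>t}}\P[\pi^{(n)}(s)\le 0].
\]
For $s>t$, the event $\{\pi^{(n)}(s)\le 0\}$ forces $|\pi^{(n)}(s)-s|\ge s$, so the displacement tail yields $\P[\pi^{(n)}(s)\le 0]\le C\rho^s$. Summing the geometric series over $s>t$ produces a bound of the form $\frac{C\rho^{t+1}}{1-\rho}$, which is uniform in $n$ and tends to $0$ as $t\to\infty$, as required.

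No substantial obstacle is expected. The only point to verify is that the displacement concentration of \cite[Theorem 1.1]{BP13} is stated with constants independent of the length of the permutation (for fixed $q<1$), which is precisely what that theorem provides; the remainder is a routine union bound together with the observation that fixed points outside $I_n$ are automatically harmless once $t\ge 1$.
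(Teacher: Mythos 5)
Your proof is correct, and it takes the second of the two routes that the paper explicitly mentions as available ("The claim follows either from Theorem~\ref{mallows_chain_return_time} or from tail bounds on the displacement of elements as in, say, \cite[Theorem 1.1]{BP13}") but then does \emph{not} carry out. The paper's executed argument translates $I_n$ to $[|I_n|]$, observes via \eqref{eq:kappa_t_symmetric_def} that the event $\{\Gamma_{\tilde\pi^{(n)}}\cap\{x>t+a,\,y\le a\}\neq\emptyset\}$ is contained in $\{T>t+a\}$ for the first return time $T$ of the arc chain to zero after time $a$, and then applies Theorem~\ref{mallows_chain_return_time} with Markov's inequality to get a bound $c_q/t^2$. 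Your argument instead uses the same translation to reduce to $\mu_{|I_n|,q}$ (translations preserve displacements, so this step is fine), a union bound over $s>t$, and the $n$- and $s$-uniform exponential displacement tail of \cite[Theorem 1.1]{BP13}; the resulting geometric series gives a bound $\frac{C\rho^{t+1}}{1-\rho}$, which is actually quantitatively stronger (exponential rather than quadratic decay in $t$). The trade-off is self-containedness: the paper's route uses only its own arc-chain machinery, while yours imports the displacement theorem from \cite{BP13}. Both are complete proofs of the claim; your handling of the boundary conventions (points $s\notin I_n$ not contributing, and $\{\pi^{(n)}(s)\le 0\}$ forcing displacement at least $s$) is accurate.
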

    Here and later, similarly to before, given $\pi:I\to I$ with $I\subseteq\Z$, we write
    \begin{equation*}
      \Gamma_{\pi} := \{(s,\pi_s)\mid s\in I\}.
    \end{equation*}
    \begin{proof}[Proof of Claim~\ref{cl:tightness_for_chi}]
      The claim follows either from
    Theorem~\ref{mallows_chain_return_time} or from tail bounds on the displacement of
    elements as in, say, \cite[Theorem 1.1]{BP13}. Let us argue from
    Theorem~\ref{mallows_chain_return_time}. Write $I_n =
    \{-a,-a+1,\ldots, b\}$ and assume that $a,b\ge 0$ as otherwise, deterministically, $\Gamma_{\pi^{(n)}}$ does not intersect the quadrant $\{x > 0, y\le 0\}$.
    Let $\tilde{\pi}^{(n)}\sim\mu_{b+a+1, q}$.
    By the definition of $\mu_{I_n,q}$, we see that
    \begin{equation}\label{eq:translated_interval}
      \P[\Gamma_{\pi^{(n)}}\cap \{x> t,y\le0\}\neq \emptyset] = \P[\Gamma_{\tilde{\pi}^{(n)}}\cap \{x > t + a,y\le a\}\neq
      \emptyset].
    \end{equation}
    Let $\kappa$ be the arc chain of $\tilde{\pi}^{(n)}$ and define
    $T:=\min\{s \ge a\mid \kappa_s=0\}$. By
    \eqref{eq:kappa_t_symmetric_def}, assuming also $t\ge 0$,
    \begin{equation*}
      \{\Gamma_{\tilde{\pi}^{(n)}}\cap \{x > t + a,y\le a\}\neq
      \emptyset\}\subseteq \{T>t+a\}.
    \end{equation*}
    Thus, Theorem~\ref{mallows_chain_return_time} and Markov's
    inequality imply that the probabilities in
    \eqref{eq:translated_interval} are at most $c_q / t^2$ for some $c_q>0$ depending only on $q$, from
    which the claim follows.
    \end{proof}

    We require the notion of an infinite \emph{one-sided} Mallows distribution. We recall that one may define an
    infinite one-sided Mallows permutation $\pi:\N\to\N$ with parameter $0<q<1$ via the
    formula \eqref{eq:sampling_formula} with the formal substitution
    $n=\infty$ and this yields a convenient sampling algorithm. This construction may be slightly generalized:
    For a countably infinite subset $I\subset\Z$ with either a minimal or maximal element let $P_I$ be the unique monotone bijection $P_I:\N\to I$
    (increasing if $I$ has a
    minimal element and decreasing if it has a maximal element).
    Given two countably infinite $I,J\subset\Z$, each with a minimal
    or maximal element, one defines the infinite one-sided Mallows distribution from $I$ to $J$ with parameter $q$ as the measure on bijections
    $\pi:I\to J$ satisfying that
    $P_J^{-1}\circ \pi\circ P_I$ has the Mallows distribution from $\N$ to $\N$ with parameter $q$.

    We now describe the sampling algorithm for the infinite two-sided case. Let $\pi\colon \Z\to\Z$ have the infinite two-sided Mallows
    distribution with parameter $q$. Let
    \[
        A:=\Gamma_{\pi} \cap \{y\leq 0\}\quad \text{and}\quad B:=\Gamma_{\pi} \cap \{y > 0\},
    \]
    be the `bottom' and `top' portions of the graph of $\pi$,
    similarly to the finite case. Let $A_{\x}$ and $B_{\x}$ be the projections onto the $x$-axis of $A$ and $B$, respectively. It is known that,
    almost surely, $A_{\x}$ has a maximal element and $B_{\x}$
    has a minimal element, and also that
    \begin{equation}\label{eq:conditional_one_sided_Mallows}
    \begin{split}
      &\text{conditioned on $A_{\x}$ and $B_{\x}$, the restrictions $\pi|_{A_{\x}}$ and $\pi|_{B_{\x}}$ are independent} \\
      &\text{\indent and have the infinite one-sided Mallows distributions with parameter $q$.}
    \end{split}
    \end{equation}
    These facts were also noted and used in one of the sampling algorithms presented in
    \cite{GO12}. With a bit of work, they also follow from
    Fact~\ref{fact:approximation_prop}: the facts on the maximal and
    minimal elements follow using Claim~\ref{cl:tightness_for_chi}, together with the reversal symmetry~\eqref{rev_symmetry}, and the fact
    \eqref{eq:conditional_one_sided_Mallows} follows from
    Lemma~\ref{lem:ggm_subgraph}. Thus, to complete the
    description of our sampling method for $\pi$ it suffices to give
    an algorithm for sampling the projections $A_{\x}$ and $B_{\x}$,
    i.e., a method to `stitch' the one-sided infinite bijections to
    a two-sided infinite bijection.

    Define the process $\chi$ by
    \[
        \chi_t := |\Gamma_{\pi}\cap \{x>t, y\leq 0\}|,\quad t\in \Z
    \]
    and note that $A_{\x}$ is exactly the set of descents of $\chi$, i.e., $A_{\x} = \left\{t\in\Z \mid \chi_{t}=\chi_{t-1}-1
    \right\}$. The distribution of $(\chi_t)_{t\geq 0}$ is described by the following two
    facts, whose proof we postpone:
    \begin{align}
        &\text{$\chi_0$ is distributed as the stationary distribution of the $(\infty,q)$-arc chain, see \eqref{eq:bounding_chain_stat_dist}.} \label{eq:chi_0_distribution} \\
        &\text{$\chi$ is a time-homogeneous Markov chain with transition probabilities given by} \label{eq:chi_markov_chain} \\
        &\text{\indent} \P [\chi_{t+1} =\chi_t-1\mid \text{$\pi_i$ for $i\leq t$}]=1-q^{\chi_t} \quad \text{and}
        \quad \P [\chi_{t+1} = \chi_t\mid \text{$\pi_i$ for $i\leq t$}]=q^{\chi_t}. \nonumber
    \end{align}
    Thus, we may easily sample $A_{\x}\cap \N$. To finish, we need
    only sample $B_{\x} \smallsetminus \N$ conditioned on $A_{\x}\cap
    \N$ and $\chi_0$, as $B_{\x} \smallsetminus \N$ together with $A_{\x}\cap
    \N$ determine both $A_{\x}$ and $B_{\x}$. To this end we rely on
    the following facts, whose proof is again postponed:
    \begin{equation}\label{eq:Ax_Bx_symmetric}
    \begin{split}
        &\text{Given $\chi_0$, $A_{\x}\cap \N$ and $B_{\x}\smallsetminus \N$ are conditionally independent and have the same}\\
        &\text{\indent distribution up to reflection. Precisely, given $\chi_0$, $B_{\x}\smallsetminus \N \stackrel{d}{=} -(A_{\x}\cap
        \N)+1$.}
    \end{split}
    \end{equation}
    In conclusion, one may sample $A_{\x}$ and $B_{\x}$ as follows: First sample $\chi_0$ from the distribution
    \eqref{eq:bounding_chain_stat_dist}. Make two independent
    samples of $(\chi_t)_{t\ge 0}$, with the same given $\chi_0$,
    via the Markov chain transition probabilities in \eqref{eq:chi_markov_chain}. Then take $A_{\x}\cap
    \N$ to be the set of descents of the first copy of $(\chi_t)_{t\ge
    0}$ and reconstruct $B_{\x}$ by taking $1-(B_{\x}\smallsetminus \N)$ to be the set of descents of the second copy of $(\chi_t)_{t\ge
    0}$. The sets $A_{\x}$ and $B_{\x}$ are determined from $A_{\x}\cap
    \N$ and $B_{\x}\smallsetminus \N$. The full permutation $\pi$
    may now be reconstructed using the property
    \eqref{eq:conditional_one_sided_Mallows} and the sampling
    algorithm for infinite one-sided Mallows permutations.

    We now return to prove \eqref{eq:chi_0_distribution}, \eqref{eq:chi_markov_chain} and
    \eqref{eq:Ax_Bx_symmetric}. Define the discrete intervals
    \begin{equation*}
    I_n = \{-n+1, -n,\ldots, n\}
    \end{equation*}
    and let $\pi^{(n)}\sim\mu_{I_n,q}$. Define the processes $\chi^{(n)}$ by
    \[
        \chi^{(n)}_t := |\Gamma_{\pi^{(n)}}\cap \{x>t, y\leq 0\}|,\quad t\in
        \Z.
    \]
    Then Fact~\ref{fact:approximation_prop} together with
    Claim~\ref{cl:tightness_for_chi} imply that
    \begin{equation} \label{eq:chi_convergence}
     \text{$\chi^{(n)}$ converges in distribution to $\chi$.}
\end{equation}
    Let us elaborate on the proof of this fact. Observe that
    \begin{equation*}
      \chi^{(n)}_t - \chi^{(n)}_s = |\{i\mid t < i \le s,
      \pi^{(n)}_i\leq 0\}|\quad\text{and}\quad \chi_t - \chi_s = |\{i\mid t < i \le s,
      \pi_i\leq 0\}|,\quad t\le s,
    \end{equation*}
    so that these differences depend only on the value of the permutations
    at finitely many indices. Thus, Fact~\ref{fact:approximation_prop} implies that ${(\chi^{(n)}_t - \chi^{(n)}_s)_{t\leq
    s}}$ converges in distribution to ${(\chi_t - \chi_s)_{t\leq s}}$.
    This may be upgraded to \eqref{eq:chi_convergence} by using the fact that ${\P[\chi_s\neq 0] \stackrel{s\to\infty}{\to}
    0}$ (since $A_\x$ has a maximal element, almost surely) and ${\sup_{n} \P[\chi^{(n)}_s\neq 0] \stackrel{s\to\infty}{\to}
    0}$ by Claim~\ref{cl:tightness_for_chi}.

    Observe that $\chi^{(n)}_0$ has the distribution of the arc chain of $\pi^{(n)}$ at $0$.
    By using \eqref{eq:chi_convergence}, property \eqref{eq:chi_0_distribution} follows from Proposition~\ref{prop:arc_chain_limit}
    and property \eqref{eq:chi_markov_chain} follows from
    Lemma~\ref{lem:ggm_subsequence}. Finally, property \eqref{eq:Ax_Bx_symmetric} is a
    consequence of the reversal symmetry \eqref{rev_symmetry} and Lemma~\ref{lem:ggm_subsequence} applied to $\pi^{(n)}$.

    \medskip
    {\bf Proofs of Lemma~\ref{lem:ggm_subgraph} and Lemma~\ref{lem:ggm_subsequence}:} To complete this section we need only prove these two lemmas.
    \begin{proof}[Proof of Lemma~\ref{lem:ggm_subgraph}]
    Throughout the proof we condition on $\Gamma_\pi \smallsetminus R$ and assume that $|\Gamma_\pi \cap R|>0$. Let $\rho$ be the relative order of $\Gamma_\pi \cap R$.
    Observe that the permutation $\rho$ uniquely determines
    $\pi$ and $\rho$ may assume, with positive probability, any
    value in $\S_{|\Gamma_\pi \cap R|}$.
    Hence, the distribution of $\rho$ is proportional to $q^{\inv(\pi)}$ by
    the definition of the Mallows distribution
    \eqref{eq:mu_n_q_def};
    while we need to prove that the distribution of $\rho$ is proportional to~$q^{\inv(\rho)}$.
    Therefore, it suffices to verify that
    \begin{equation}\label{eq:ggm_subgraph_main}
        \inv(\pi)-\inv(\rho)\quad\text{is determined by $\Gamma_\pi\smallsetminus R$.}
    \end{equation}
       We say that two points $(v_{\x},v_{\y}),(w_{\x},w_{\y})\in \R^2$ form an inversion if $(v_{\x}-w_{\x})\cdot (v_{\y} -w_{\y}) < 0$.
        For two finite subsets $V,W\subset \R^2$ we define
        \[
            \inv (V,W) := |\{(v,w)\in V\times W \mid \text{$(v,w)$ forms an inversion}\}| \quad \text{and} \quad
            \inv (V) := \tfrac{1}{2}\inv(V,V).
        \]
        The definitions are chosen so that $\inv(\sigma) = \inv(\Gamma_\sigma)$ for any permutation $\sigma$. Consider the following
        equality,
        \[
            \inv (\pi) = \inv(\Gamma_\pi\smallsetminus R) + \inv(\Gamma_\pi\smallsetminus R,\Gamma_\pi\cap R) + \inv(\Gamma_\pi\cap R).
        \]
        Observe that $\inv(\Gamma_\pi\smallsetminus R)$ is determined by $\Gamma_\pi \smallsetminus R$ and that
        $\inv(\rho) = \inv(\Gamma_\pi\cap R)$. Thus we need only prove
        that
        \begin{equation}\label{eq:break_into_u}
          \inv(\Gamma_\pi\smallsetminus R,\Gamma_\pi\cap R) = \sum_{u\in \Gamma_\pi \smallsetminus
          R} \inv(\{u\},\Gamma_\pi\cap R)\quad\text{is determined by $\Gamma_\pi\smallsetminus
          R$}.
        \end{equation}

        Let $u=(u_{\x},u_{\y})\in \Gamma_\pi \smallsetminus R$. Since $u\in R^\complement$, we know that at least one of four inequalities occur: $u_{\x}<x_1, u_{\x} > x_2, u_{\y}<y_1, u_{\y}>y_2$.
        Assume $u_{\x} > x_2$. We have
        \begin{align*}
            \inv(\{u\},\Gamma_\pi\cap R) &= |\{ (x,y)\in \Gamma_\pi\cap R \mid y>u_{\y}\}| \\
                    &= n - u_{\y} -  |\{ (x,y)\in \Gamma_\pi\smallsetminus R \mid
            y>u_{\y}\}|.
        \end{align*}
    Thus $\inv\left(\{u\},\Gamma_\pi\cap
    R\right)$ is determined by $\Gamma_\pi\smallsetminus R$ and $u$. Applying similar reasoning in the other three cases shows
    that \eqref{eq:break_into_u} and hence \eqref{eq:ggm_subgraph_main} holds.
    \end{proof}

    \begin{proof}[Proof of Lemma~\ref{lem:ggm_subsequence}]
       As $0\leq \chi_{t}-\chi_{t+1} \leq 1$,
        it suffices to establish the formula for $\P [\chi_{t+1} =\chi_t-1\mid \pi_1,\ldots,\pi_t]$.
    It is convenient to use the formula \eqref{eq:sampling_formula}. Let $j_1<\ldots<j_{n-t}$ be the elements of $[n]\smallsetminus \{\pi_1,\ldots,\pi_{t}\}$.
    Observe that $j_k \leq s$ if and only if $k \le \chi_t$. Hence by \eqref{eq:sampling_formula} and the definition of $\chi_t$ we have
        \[
            \P[\chi_{t+1}=\chi_t-1\mid \pi_1,\ldots,\pi_t] = \sum_{k=1}^{\chi_t}\P[\pi_{t+1} = j_k \mid \pi_1,\ldots,\pi_t] = \frac{1 - q^{\chi_t }}{1-q^{n-t}}. \qedhere
        \]
    \end{proof}
\end{subsection}
\end{section}

\begin{section}{Main Theorems}\label{section_mt}
We start by introducing several definitions which we will need for
proving our main theorems. A non-empty subset $\Fa$ of $[n]$ is
called an \emph{arc} of the permutation $\pi\in \S_n$ if its
elements can be ordered so that $\Fa = \{a_1, \ldots, a_{|\Fa|}\}$
with $\pi(a_{i}) = a_{i+1}$ for $1\le i<|\Fa|$. We say the arc has
\emph{length} $|\Fa|$ and say the arc is \emph{closed} if it forms a
cycle, that is, if also $\pi(a_{|\Fa|}) = a_1$. A non-closed arc is
called \emph{open}. When the arc is open the above ordering is
unique, in which case we call $a_1$ and $a_{|\Fa|}$ the \emph{tail}
and \emph{head} of the arc $\Fa$, respectively, and denote them by
$\tail(\Fa)$ and $\head(\Fa)$.

In our proofs of the main theorems we will rely upon the diagonal
exposure process introduced in Section~\ref{sec:diagonal_exposure}.
We recall that by time $t$ of this process, we expose the
portion of the graph $\Gamma_\pi$ contained in $\{x\le t,y \le t\}$.
This information allows us to determine all arcs which are contained
in $[t]$ and, moreover, to tell whether each such arc is open or
closed. This motivates the following definitions.

Let $\pi\in\S_n$ and $0 \le t \le n$. We say that an arc $\Fa$ of $\pi$
is $[t]$-maximal (with respect to inclusion) if $\Fa\subseteq[t]$
and if every arc $\Fb\subseteq[t]$ which contains $\Fa$ is in fact
equal to $\Fa$. Denote
\begin{align*}
    &\CA_t (\pi):=\{\Fa\mid \text{$\Fa$ is $[t]$-maximal}\},\\
    &\CO_t(\pi):=\{\Fa\mid \text{$\Fa$ is $[t]$-maximal and open}\}.
\end{align*}
Recalling the definition of the arc chain $\kappa$ from
\eqref{arc_chain_def} we observe that
\begin{equation}\label{eq:arc_chain_rel_arcs}
  \kappa_t(\pi)=|\CO_t(\pi)|,
\end{equation}
which is the origin of the name `arc chain'. We note further that
for each $1\le s\le t$ there exists a unique $[t]$-maximal arc containing
$s$ and we denote this arc by $\arc_s^t(\pi)$. When the permutation
$\pi$ is clear from the context we shall abbreviate $\arc_s^t(\pi),
\CA_t (\pi)$ and $\CO_t(\pi)$ to $\arc_s^t, \CA_t$ and $\CO_t$.

\begin{figure}
\begin{subfigure}[tl]{0.42\textwidth}
\bigskip\smallskip
\begin{tikzpicture}
\begin{axis}[
title = {Graph of (18726)(3)(45)},
xmin = 0.5,
xmax = 8.5,
ymin = 0.5,
ymax = 8.5,
];
\addplot+[  only marks,
        color = blue,
            mark size=2.9pt] coordinates {(1,8) (2,6) (3,3) (4,5) (5,4) (6,1) (7,2) (8,7)};
\addplot[color = red, line width = 0.5pt, style = dashed] coordinates {(1.5, 0) (1.5, 1.5) (0, 1.5)};
\addplot[color = red, line width = 0.5pt, style = dashed] coordinates {(2.5, 0) (2.5, 2.5) (0, 2.5)};
\addplot[color = red, line width = 0.5pt, style = dashed] coordinates {(3.5, 0) (3.5, 3.5) (0, 3.5)};
\addplot[color = red, line width = 0.5pt, style = dashed] coordinates {(4.5, 0) (4.5, 4.5) (0, 4.5)};
\addplot[color = red, line width = 0.5pt, style = dashed] coordinates {(5.5, 0) (5.5, 5.5) (0, 5.5)};
\addplot[color = red, line width = 0.5pt, style = dashed] coordinates {(6.5, 0) (6.5, 6.5) (0, 6.5)};
\addplot[color = red, line width = 0.5pt, style = dashed] coordinates {(7.5, 0) (7.5, 7.5) (0, 7.5)};
\end{axis}
\end{tikzpicture}
\end{subfigure}
\begin{subtable}[tr]{0.5\textwidth}
  \begin{tabular}[tr]{ c | c | c | c }
    \multicolumn{1}{c}{$t$} & \multicolumn{1}{c}{$\kappa_t$} & \multicolumn{1}{c}{$\CO_t$} & \multicolumn{1}{c}{$\CA_t \smallsetminus \CO_t$} \\
    \noalign{\medskip}\hline
     1 & 1 & $\{1\}$ & $\emptyset$ \\
     2 & 2 & $\{1\}, \{2\}$ & $\emptyset$ \\
     3 & 2 & $\{1\}, \{2\}$ & $\{3\}$ \\
     4 & 3 & $\{1\}, \{2\}, \{4\}$ & $\{3\}$ \\
     5 & 2 & $\{1\}, \{2\}$ & $\{3\}, \{4,5\}$ \\
    6 & 1 & $\{1,2,6\}$ & $\{3\}, \{4,5\}$ \\
    7 & 1 & $\{1,2,6,7\}$ & $\{3\}, \{4,5\}$ \\
    8 & 0 & $\emptyset$ & $\{3\}, \{4,5\}, \{1,2,6,7,8\}$ \\
  \end{tabular}
\end{subtable}
\caption{Graph of the permutation $(18726)(3)(45)$ alongside its arc
chain process $(\kappa_t)$, its maximal open arcs process $(\CO_t)$
and its maximal closed arcs process $(\CA_t\smallsetminus \CO_t)$.}
\label{fig:graph_with_table}
\end{figure}
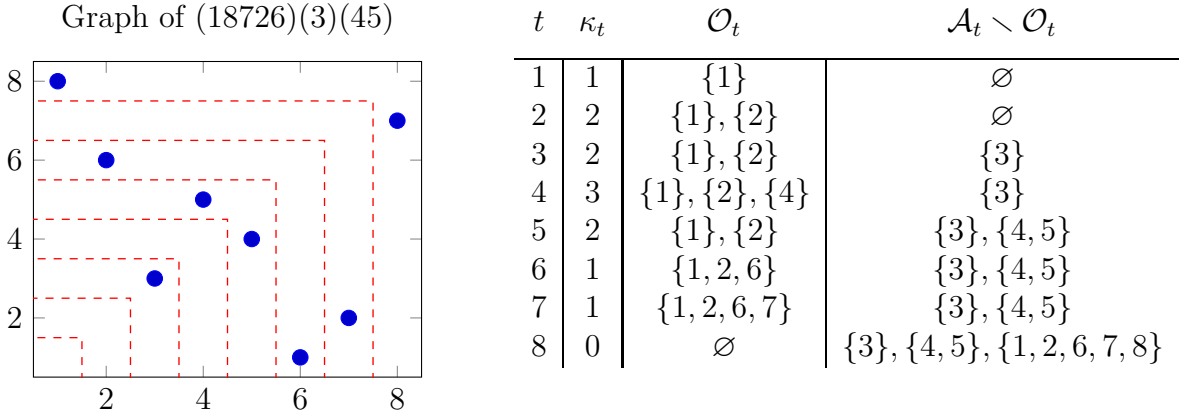

Let us describe how $\CA_t$ evolves during the diagonal exposure
process, i.e., the relationship between $\CA_t$ and $\CA_{t+1}$; see
Figure~\ref{fig:graph_with_table} for an example. The newly exposed
portions of the graph $\Gamma_\pi$ at time $t+1$ were described in
\eqref{eq:t_iteration_new_portions}. Thus, the set of
$[t+1]$-maximal arcs is formed from the set of $[t]$-maximal arcs by
having the element $t+1$ either: (i) form an, open or closed, arc by
itself; (ii) extend an open arc to a new, open or closed, longer
arc; or (iii) merge two open arcs into a longer open arc. These
three possibilities are considered below according to their effect
on the number of open arcs.
\begin{itemize}[topsep=2pt,itemsep=1pt]
    \item
        If $\kappa_{t+1}=\kappa_t+1$ then necessarily $\{t+1\}$ is a $[t+1]$-maximal open arc and $\CA_{t+1}$ equals $\CA_t$ with $\{t+1\}$
        added.
    \item
        If $\kappa_{t+1}=\kappa_t$ then either $t+1$ is a fixed point of $\pi$
        or $t+1$ extends an open arc in $\CA_t$ to a longer open
        arc in $\CA_{t+1}$, either as the head or as the tail of the arc.
    \item
        If $\kappa_{t+1}=\kappa_t-1$ then either two open arcs were merged via $t+1$ or an open arc was extended by $t+1$ to a closed
        arc.
\end{itemize}

We now consider the probabilities for the process $(\CA_t)$ to
evolve according to the above possibilities when $\pi$ is a Mallows
random permutation. As mentioned above, we note that $\CA_t$ and
$\CO_t$ are measurable with respect to $\CF_t$, i.e., $\CA$ and
$\CO$ are adapted to the diagonal exposure filtration.

The probability, conditioned on $\CF_t$, that the element $t+1$
forms a $[t+1]$-maximal arc by itself was already calculated in
Proposition~\ref{mallows_chain_transition_prob} (for the case that
it forms an open arc, or equivalently that $\kappa_{t+1} = \kappa_t
+ 1$) and Lemma~\ref{fix_point_equation} (for the case that it is a
fixed point).

The other options, in which the element $t+1$ either extends an
existing arc or merges two arcs, are determined from the basic
events $\{\pi_{t+1}^{-1}=\head(\Fa)\}$ and $\{\pi_{t+1}=\tail(\Fa)\}$ for an
arc $\Fa\in\CO_t$. Indeed, for the merging event one checks in a
straightforward manner that if $\Fa,\Fb\in\CO_t$ are distinct arcs
then
\[
    \{\Fa\cup \{t+1\} \cup \Fb \in\CO_{t+1} \text{ and } \pi_{t+1}\in \Fb\} \quad \iff \quad \{\pi_{t+1}^{-1}=\head(\Fa) \text{ and  } \pi_{t+1}=\tail(\Fb)\}
\]
and for the event that $t+1$ extends the open arc $\Fa \in \CO_t$ to a
closed arc we have
\[
    \{\Fa\cup \{t+1\} \in\CA_{t+1}\smallsetminus\CO_{t+1}\} \quad \iff \quad \{\pi_{t+1}^{-1}=\head(\Fa) \text{ and  } \pi_{t+1}=\tail(\Fa)\}.
\]
The event that $t+1$ extends an open arc to a longer
open arc is the complement of the other possibilities. Thus the
probabilities of these events may be derived from the following
lemma.
\begin{lemma}\label{lemma_U_arc_eq}
    Let $\pi\sim\mu_{n,q}$, let $0\le t < n$ and let $\Fa,\Fb \in \CO_t$  be two, not necessarily distinct, arcs.
    The events $\{\pi_{t+1}^{-1}=\head(\Fa)\}$ and $\{\pi_{t+1}=\tail(\Fb)\}$ are conditionally independent given $\CF_t$.
    Furthermore,
    \begin{equation}\label{arc_tail_head_eq}
        \P_t[\pi_{t+1}^{-1}=\head(\Fa)]= q^{\indh_\Fa} \cdot \frac{1-q}{1-q^{n-t}} \quad \text{and} \quad
        \P_t[\pi_{t+1}=\tail(\Fb)]=q^{\indt_\Fb} \cdot \frac{1-q}{1-q^{n-t}},
    \end{equation}
    where
    \[
        \indh_\Fa=|\{\Fc\in\CO_t\mid \head(\Fc)< \head(\Fa)\}| \quad \text{and} \quad \indt_\Fb=|\{\Fc\in\CO_t\mid \tail(\Fc)< \tail(\Fb)\}| .
    \]
\end{lemma}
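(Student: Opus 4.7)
The plan is to reduce both assertions to earlier results by translating each event into a statement about the newly revealed portions of $\Gamma_\pi$ at time $t+1$. The event $\{\pi_{t+1}=\tail(\Fb)\}$ says that $(t+1,\tail(\Fb))\in\Gamma_\pi$, which is measurable with respect to $\Gamma_\pi\cap\{x\ge t+1,\,y\le t\}$; the event $\{\pi_{t+1}^{-1}=\head(\Fa)\}$ says that $(\head(\Fa),t+1)\in\Gamma_\pi$, which is measurable with respect to $\Gamma_\pi\cap\{x\le t,\,y\ge t+1\}$. Since $\CF_t=\sigma(\Gamma_\pi\cap\{x\le t,\,y\le t\})$, the conditional independence follows at once from \eqref{eq:conditional_independence_in_graph} applied with $i=j=t+1$.

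For the first probability formula I would start from the sampling formula \eqref{eq:sampling_formula} conditional on $\pi_1,\ldots,\pi_t$. Writing $j_1<\cdots<j_{n-t}$ for the unused values $[n]\smallsetminus\{\pi_1,\ldots,\pi_t\}$, I only need to identify the rank $k$ of $\tail(\Fb)$ in this list. The key combinatorial observation is that for $v\in[t]$, $v$ is unused iff $\pi^{-1}(v)>t$, which is precisely the condition that $v$ is the tail of some open arc in $\CO_t$ (otherwise $\pi^{-1}(v)\in[t]$ would extend the arc inside $[t]$). Since $\tail(\Fb)\le t$, the $j$'s strictly smaller than $\tail(\Fb)$ are exactly the tails of open arcs lying below $\tail(\Fb)$, so $k-1=\indt_\Fb$ and \eqref{eq:sampling_formula} yields $\P[\pi_{t+1}=\tail(\Fb)\mid \pi_1,\ldots,\pi_t]=q^{\indt_\Fb}\frac{1-q}{1-q^{n-t}}$. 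Because the right-hand side depends on $\pi_1,\ldots,\pi_t$ only through $\CO_t$, which is $\CF_t$-measurable, taking $\CF_t$-conditional expectation gives the formula stated in the lemma.

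The head formula then follows by the inversion symmetry \eqref{inv_symmetry}. The map $\pi\mapsto\pi^{-1}$ reflects $\Gamma_\pi$ in the diagonal so preserves $\CF_t$, and it sends an open arc $\Fa=(a_1,\ldots,a_m)$ of $\pi$ to the reverse arc $(a_m,\ldots,a_1)$ of $\pi^{-1}$, whose tail equals $\head(\Fa)$. Consequently the tails of open arcs of $\pi^{-1}$ at time $t$ are exactly the heads of open arcs of $\pi$ at time $t$, so applying the already proved tail formula to $\pi^{-1}$ at $\Fa$ (viewed as an arc of $\pi^{-1}$) produces the desired expression with count $\indh_\Fa$. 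The only substantive step in the argument is the combinatorial identification $k-1=\indt_\Fb$ above; everything else is a direct appeal to \eqref{eq:sampling_formula}, the conditional independence \eqref{eq:conditional_independence_in_graph}, and the symmetry \eqref{inv_symmetry}.
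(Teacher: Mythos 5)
Your proof is correct and follows essentially the same route as the paper's: conditional independence via \eqref{eq:conditional_independence_in_graph} applied at $i=j=t+1$, the tail formula from the sampling formula \eqref{eq:sampling_formula} after identifying the rank of $\tail(\Fb)$ among the unused values with $\indt_\Fb+1$ (the combinatorial fact the paper states as "not difficult to verify," which you verify explicitly), and the head formula by the inversion symmetry \eqref{inv_symmetry}. No gaps.
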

A consequence of the lemma is that for $q\geq \frac{1}{2}$ and any
two open arcs $\Fa,\Fb \in \CO_t$ one has
  \[
        \P_t[\pi_{t+1}^{-1}=\head(\Fa) \text{ and } \pi_{t+1}=\tail(\Fb)]\approx \left(\frac{1-q}{1-q^{n-t}}\right)^2 \quad \text{on $\{\kappa_t\leq
        2\xi\}$},
  \]
which eventually leads to the appearance of the length scale
$\tfrac{1}{(1-q)^2}$ in our theorems.
\begin{proof}[Proof of Lemma~\ref{lemma_U_arc_eq}]
The fact that the events $\{\pi_{t+1}^{-1}=\head(\Fa)\}$ and
$\{\pi_{t+1}=\tail(\Fb)\}$ are conditionally independent given $\CF_t$
    is a direct consequence of \eqref{eq:conditional_independence_in_graph}.

It is not difficult to verify that
\begin{equation*}
  \indt_\Fb = |\{1\le j<\tail(\Fb)\mid j\notin \{\pi_1,\ldots,\pi_{t}\}\}|.
\end{equation*}
Thus, the sampling formula \eqref{eq:sampling_formula} yields that
\begin{align*}
  \P_t[\pi_{t+1}=\tail(\Fb)] =\E_t \P[  \pi_{t+1}=\tail(\Fb) \mid \pi_1,\ldots, \pi_{t}]
            =\E_t \left[q^{\indt_{\Fb}}\cdot\frac{1-q}{1-q^{n-t}} \right]
  &= q^{\indt_{\Fb}}\cdot \frac{1-q}{1-q^{n-t}}
\end{align*}
as we wanted to show. The formula for
$\P_t[\pi_{t+1}^{-1}=\head(\Fa)]$ follows from the formula for
$\P_t[\pi_{t+1}=\tail(\Fb)]$ applied to the inverse permutation
$\pi^{-1}$, as we have the inverse symmetry \eqref{inv_symmetry} and
the fact that $\CF_t$ and $\CO_t$ are invariant under this symmetry.
\end{proof}

\begin{subsection}{Expected Number of Cycles}\label{sec:num_of_cycles}
In this section we prove Theorem~\ref{thm:number_of_cycles}. Let
$\pi\sim\mu_{n,q}$ and $\kappa$ be the arc chain of $\pi$. Recall
from the Introduction that $\CC_s$ is the set of points in the cycle
of $\pi$ which contains $s$.

Our proof is based on the fact that the number of cycles in $\pi$
equals the number of points $s\in [n]$ that satisfy $s=\max
(\CC_s)$, i.e.,
\begin{equation}\label{eq:num_of_cycles_sum}
    \text{number of cycles in $\pi$} \ = |\{s\in [n]\mid s = \max(\CC_s)\}| = \sum_{s\in [n]} \1_{\{s=\max(\CC_s)\}}.
\end{equation}
The following proposition provides an estimate for the conditional
probability of the event $\{s=\max (\CC_s)\}$ given $\CF_{s-1}$.
\begin{proposition} \label{upper_bound_arc_s}
    For $1\le s \le n$ one has
    \[
        q^{\kappa_{s-1}}\cdot\frac{1-q}{1-q^{n-s+1}} \leq \P_{s-1}[s=\max (\CC_s)]\leq\frac{1-q}{1-q^{n-s+1}} .
    \]
\end{proposition}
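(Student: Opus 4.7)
My plan is to analyze $\{s = \max(\CC_s)\}$ through the $s$-th step of the diagonal exposure process. Since $s=\max(\CC_s)$ is equivalent to $\CC_s \subseteq [s]$, and since the cycles contained in $[s]$ are determined by $\CF_s$, the event occurs precisely when, upon adding $s$, we either create a singleton cycle $\{s\}$ (i.e., $\pi_s = s$) or close some open arc $\Fa \in \CO_{s-1}$ into a cycle (i.e., $\pi_s^{-1} = \head(\Fa)$ and $\pi_s = \tail(\Fa)$ for the \emph{same} $\Fa$). As distinct open arcs have distinct heads and tails, this yields the disjoint decomposition
\[
\{s = \max(\CC_s)\} = \{\pi_s = s\} \;\sqcup\; \bigsqcup_{\Fa \in \CO_{s-1}}\{\pi_s^{-1} = \head(\Fa),\, \pi_s = \tail(\Fa)\}.
\]

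Next, I would apply Lemma~\ref{fix_point_equation} to the first event and Lemma~\ref{lemma_U_arc_eq} (together with its conditional independence assertion) to each term in the union; this yields, writing $\kappa := \kappa_{s-1}$ and $N := n-s+1$,
\[
\P_{s-1}[s = \max(\CC_s)] = \frac{(1-q)q^\kappa(q^\kappa - q^N)}{(1-q^N)^2} + \frac{(1-q)^2}{(1-q^N)^2}\sum_{\Fa \in \CO_{s-1}} q^{\indh_\Fa + \indt_\Fa}.
\]
The key observation is that as $\Fa$ varies over $\CO_{s-1}$, both $\indh_\Fa$ and $\indt_\Fa$ run through $\{0,1,\ldots,\kappa-1\}$, while the matching of one to the other within a given arc is an arbitrary permutation $\sigma \in \S_\kappa$ determined by $\CF_{s-1}$. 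Thus the sum above equals $\sum_{i=0}^{\kappa-1} q^{i+\sigma(i)}$ for this $\sigma$. For the upper bound I simply estimate $q^{\indh_\Fa} \le 1$ to obtain $\sum \le (1-q^\kappa)/(1-q)$; for the lower bound I invoke the rearrangement inequality (or equivalently, convexity of $x\mapsto q^x$), whose minimum over $\sigma$ is attained when $\sigma(i)=\kappa-1-i$, giving $\sum \ge \kappa q^{\kappa-1}$.

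Substituting the upper bound and clearing $(1-q^N)^2/(1-q)$ reduces the desired inequality $\P_{s-1}[s=\max(\CC_s)] \le (1-q)/(1-q^N)$ to $(1-q^\kappa)(q^\kappa - q^N) \ge 0$, which holds because $0 \le \kappa \le N$ ($\kappa_t \le n-t$ a.s.\ from the construction of the arc chain). Substituting the lower bound and simplifying, the required $\P_{s-1}[s=\max(\CC_s)] \ge q^\kappa(1-q)/(1-q^N)$ reduces to $\kappa(1-q) \ge q(1-q^\kappa)$, i.e.\ $\kappa \ge q(1+q+\cdots+q^{\kappa-1})$, which is immediate since each of the $\kappa$ summands is at most $1$ and $q<1$. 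The only step with real content is recognizing the sum over arcs as $\sum_i q^{i+\sigma(i)}$ and locating its rearrangement-optimal minimum at $\kappa q^{\kappa-1}$; everything else is routine bookkeeping with the explicit transition probabilities already established in Section~\ref{sec:diagonal_exposure}.
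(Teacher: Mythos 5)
Your proof is correct, but it takes a genuinely different route from the paper's. You decompose $\{s=\max(\CC_s)\}$ as the disjoint union of the fixed-point event and the closure events of the individual arcs in $\CO_{s-1}$, compute each term exactly via Lemma~\ref{fix_point_equation} and Lemma~\ref{lemma_U_arc_eq}, and then optimize the resulting sum $\sum_i q^{i+\sigma(i)}$ over the head-to-tail matching $\sigma$ by the rearrangement inequality. The paper instead conditions on the finer $\sigma$-algebra generated by $\pi_1,\ldots,\pi_{s-1}$ and observes that, given this data, there is a \emph{unique} value $t$ (the tail of the arc ending at $s$) for which $\pi_s=t$ closes the cycle; the sampling formula \eqref{eq:sampling_formula} then gives $\P[\pi_s=t\mid\pi_1,\ldots,\pi_{s-1}]=q^{k-1}\tfrac{1-q}{1-q^{n-s+1}}$ with $0\le k-1\le\kappa_{s-1}$ pointwise, and taking $\E_{s-1}$ finishes the proof in two lines. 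Your argument is longer but yields an exact expression for $\P_{s-1}[s=\max(\CC_s)]$ in terms of $\kappa_{s-1}$ and the matching $\sigma$, which is more information than the proposition asks for; the paper's argument buys brevity by exploiting that the closing value is deterministic given the full history. All the ingredients you use (the a.s. bound $\kappa_{s-1}\le n-s+1$, the conditional independence in Lemma~\ref{lemma_U_arc_eq}, and the fact that $\indh_\Fa$ and $\indt_\Fb$ each enumerate $\{0,\ldots,\kappa_{s-1}-1\}$) are correctly justified, and the two algebraic reductions check out, including the degenerate case $\kappa_{s-1}=0$.
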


\begin{proof}
    Given $\pi_1,\ldots,\pi_{s-1}$ there is a
    unique element $t\in[n]$, distinct from $\pi_1,\ldots,\pi_{s-1}$, such that $s = \max(\CC_s)$ if and only if $\pi_s = t$
    (indeed, there is a unique $k\ge 1$ and unique distinct $t_1, \ldots, t_k \in [s]$ satisfying
    $t_k = s$, $\pi_{t_i} = t_{i+1}$ for $1\le i\le k-1$ and
    $t_1\notin\{\pi_1, \ldots, \pi_{s-1}\}$ whence we set $t = t_1$).
    We shall derive the proposition from this fact and formula \eqref{eq:sampling_formula}.
    Write $j_1<\cdots<j_{n-s+1}$ for the elements of $[n]\smallsetminus
    \{\pi_1\ldots,\pi_{s-1}\}$ and let $k$ be such that $t = j_k$.
    Then
    \[
         \P_{s-1}[s=\max (\CC_s)]   =\E_{s-1} \P[\pi_s=t\mid \pi_1,\ldots,\pi_{s-1}]
                        = \E_{s-1}\left[ q^{k-1} \frac{1-q}{1-q^{n-s+1}} \right].
    \]
    The proposition follows as $0\leq k-1 \leq \kappa_{s-1}$.
\end{proof}
We augment this proposition with the following simple estimate on
$q^{\kappa_{s-1}}$.
\begin{claim} \label{cl:expected_q_to_kappa}
    For $1\le s \le n$ one has
    \[
        \E[q^{\kappa_{s-1}}]\approx 1.
    \]
\end{claim}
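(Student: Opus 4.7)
The plan is to treat the two sides of the $\approx$ separately. The upper bound $\E[q^{\kappa_{s-1}}]\le 1$ is immediate: the arc chain satisfies $\kappa_{s-1}\ge 0$ almost surely, so $q^{\kappa_{s-1}}\le 1$ pointwise. For the lower bound I will show that $\E[q^{\kappa_{s-1}}]$ exceeds a positive absolute constant, uniformly in $n$, $s$, and $q$, via the sandwich
\[
   \E[q^{\kappa_{s-1}}]\;\ge\; q^K\cdot\P[\kappa_{s-1}\le K]
\]
for suitably chosen $K=K(q)$, handling the cases $q>1/2$ and $q\le 1/2$ separately.

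In the regime $q>1/2$ I take $K=2\xi$. For the probability factor, I apply Theorem~\ref{mallows_chain_bounds} to the arc chain of $\pi$ (which starts at $0$ by Proposition~\ref{mallows_chain_transition_prob}) with $d=\xi$; since $q^{\xi(\xi+1)}=(q^\xi)^{\xi+1}\le 2^{-(\xi+1)}$ and $1-q^{2\xi}\ge 3/4$, this gives $\P[\kappa_{s-1}>2\xi]\le \tfrac{4}{3}\cdot 2^{-(\xi+1)}$, which is at most $1/6$ as soon as $\xi\ge 2$. For the $q^K$ factor, the definition of $\xi$ yields $q^{\xi-1}>1/2$, whence $q^\xi>q/2>1/4$ and so $q^{2\xi}>1/16$. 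Multiplying, $\E[q^{\kappa_{s-1}}]\ge \tfrac{1}{16}\cdot\tfrac{5}{6}$ throughout $q>1/2$.

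The main obstacle is the complementary regime $q\le 1/2$, since then $\xi=1$ and the factor $q^{2\xi}=q^2$ can be arbitrarily small. I will instead take $K=0$, using $\E[q^{\kappa_{s-1}}]\ge \P[\kappa_{s-1}=0]\ge \nu_0$ by \eqref{mallows_corr_bounds}, where $\nu_0=1/Z$ is the stationary mass at $0$ of the $(\infty,q)$-arc chain and $Z=\sum_{j\ge 0}\prod_{i=1}^j u_i/v_i$ as in \eqref{eq:bounding_chain_stat_dist}. A direct termwise estimate of the partial products $\prod_{i=1}^j u_i/v_i=q^{j^2}/\prod_{i=1}^j(1-q^i)^2$ shows that they are at most $4^j q^{j^2}\le 2^{2j-j^2}$ for $q\le 1/2$ (using $1-q^i\ge 1/2$), hence summable to an absolute constant. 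This forces $\nu_0$ to be bounded below uniformly in $q\le 1/2$, and combining the two cases completes the proof of the lower bound.
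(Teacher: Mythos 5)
Your proof is correct and follows essentially the same route as the paper's: the upper bound is the trivial pointwise one, and the lower bound splits into a regime where Theorem~\ref{mallows_chain_bounds} gives $q^{2\xi}\,\P[\kappa_{s-1}\le 2\xi]\gtrsim 1$ and a small-$q$ regime where $\P[\kappa_{s-1}=0]\ge\nu_0\gtrsim 1$ via \eqref{mallows_corr_bounds}. The only (immaterial) differences are the split point ($1/2$ versus the paper's $1/8$) and that you bound $\nu_0$ directly from \eqref{eq:bounding_chain_stat_dist} rather than quoting Lemma~\ref{lem:stationary_distribution_bounds}.
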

\begin{proof}
Clearly, $q^{\kappa_{s-1}}\le 1$ so we only need to prove the lower
bound. We consider two cases. If $q\geq \frac{1}{8}$ then, by
Theorem~\ref{mallows_chain_bounds},
\begin{equation*}
  \E[q^{\kappa_{s-1}}]\ge q^{2\xi}\P[\kappa_{s-1}\le 2\xi] \ge
  \frac{1}{4}\left(1 - \tfrac{q^{\xi^2 + \xi}}{1-q^{2\xi}}\right)
  \gtrsim 1.
\end{equation*}
Whereas if $q\le \frac{1}{8}$ then, by \eqref{mallows_corr_bounds}
and Lemma~\ref{lem:stationary_distribution_bounds},
\begin{equation*}
  \E[q^{\kappa_{s-1}}] \ge \P[\kappa_{s-1} = 0] \gtrsim 1.\qedhere
\end{equation*}
\end{proof}
Putting together \eqref{eq:num_of_cycles_sum} and the previous two
estimates shows that
\begin{equation*}
  \E[\text{number of cycles in $\pi$}]\approx \sum_{s=1}^n \frac{1-q}{1-q^{n-s+1}}.
\end{equation*}
Theorem~\ref{thm:number_of_cycles} is a direct consequence of this
fact, together with the observation that
\begin{equation}\label{eq:q_k_approx}
    \frac{1-q}{1- q^k}\approx 1-q +\frac{1}{k},\quad k\ge 1.
\end{equation}
To verify \eqref{eq:q_k_approx} we consider two cases. Recall $\xi$
from \eqref{eq:xi_def}.

    If $k\geq \xi$ then $q^{k}\leq \frac{1}{2}$ and $\frac{1}{k}\lesssim 1- q$; therefore $\frac{1-q}{1-q^k}\approx 1 - q \approx 1- q + \frac{1}{k}$.

    If $k< \xi$ then  $q^{k} > \frac{1}{2}$ and $\frac{1}{k}\gtrsim 1- q$; thus  $\frac{1-q}{1-q^k} = \frac{1}{1+q+\cdots+q^{k-1}}\approx \frac{1}{k} \approx 1- q + \frac{1}{k}$.
\end{subsection}

\begin{subsection}{Expected Cycle Diameter}\label{sec:cycle_diam}
In this section we prove Theorem~\ref{cycle_diam_theorem}. Some of
the tools developed here will also be used in proving the rest of
our main theorems.

We prove the lower and upper bounds on the quantity $\E[\max
(\CC_s)-s]$ as given in \eqref{max_equation}. The other bounds in
the theorem follow: The bounds on $\E[s - \min (\CC_s)]$ given in
\eqref{min_equation} are equivalent to those of \eqref{max_equation}
via the reversal symmetry \eqref{rev_symmetry}. Put together, the
bounds in \eqref{max_equation} and \eqref{min_equation} yield the
bounds on $\E[\max (\CC_s)-\min(\CC_s)]$ given in
\eqref{diam_equation}.

Throughout this section we let $\pi\sim\mu_{n,q}$ and $\kappa$ be the arc chain of $\pi$.

\smallskip \noindent {\bf Lower bound:} Let us begin with the proof
of the lower bound of \eqref{max_equation}, i.e.,
\begin{equation}\label{max_equation_low}
     \E[\max(\CC_s)-s]\gtrsim \min\{q\cdot\xi^2, \ n-s\},
\end{equation}
where we recall the definition of $\xi$ from \eqref{eq:xi_def}.
First we are going to provide upper bounds for closing the cycle of
$s$ at time $s\leq t <n$. Observe that for $s\leq t <n$, conditioned
on the event $\{\text{$\arc_s^t$ is open}\}$, one has,
\[
    \arc^{t+1}_s \text{ is closed} \quad \iff \quad \pi_{t+1}^{-1}=\head(\arc^t_s)\ \text{ and  }\ \pi_{t+1}=\tail(\arc^t_s).
\]
Hence the following equation is an immediate
corollary of Lemma \ref{lemma_U_arc_eq}:
\begin{equation}\label{upper_bound_arc_t}
        \text{for $s\leq t <n$,}\quad \P_t  \left[ \arc^{t+1}_s \text{ is closed} \right] \leq \left( \frac{1-q}{1-q^{n-t}} \right)^2  \quad \text{on $\{\arc^t_s \in \CO_t\}$.}
\end{equation}
It is worth noting that the probability of closing the cycle of $s$
at time $t=s$ has a larger estimate, as given in
Proposition~\ref{upper_bound_arc_s}, as the event $s=\max (\CC_s)$
occurs either when $s$ is a fixed point or when any one of the open
arcs closes at time $t=s$.

Combining Proposition \ref{upper_bound_arc_s} with \eqref{upper_bound_arc_t} we derive by induction the following corollary.
\begin{corollary} \label{diam_lower_corr}
    For $0<s\le t\leq n$ one has\footnote{In the case when $s=t$ the empty product is assumed to be $1$.}
    \[
        \P_{s-1}[\arc_s^t \text{ is open}]\geq \frac{q-q^{n-s+1}}{1-q^{n-s+1}}
        \prod_{s \le i < t} \left( 1-\left(\frac{1-q}{1-q^{n-i}} \right)^2 \right).
    \]
\end{corollary}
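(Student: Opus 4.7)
The plan is to prove this by induction on $t$, starting from $t=s$.

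For the base case $t=s$, the empty product on the right-hand side equals $1$ and I need to show $\P_{s-1}[\arc_s^s \text{ is open}] \geq \frac{q-q^{n-s+1}}{1-q^{n-s+1}}$. The key observation is that $\arc_s^s$ is the unique $[s]$-maximal arc containing $s$, and this arc is closed precisely when the entire cycle $\CC_s$ lies in $[s]$, equivalently when $s = \max(\CC_s)$. So $\{\arc_s^s \text{ is closed}\} = \{s = \max(\CC_s)\}$, and the upper bound from Proposition~\ref{upper_bound_arc_s} gives $\P_{s-1}[s = \max(\CC_s)] \leq \frac{1-q}{1-q^{n-s+1}}$. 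Taking complements yields exactly the required lower bound.

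For the inductive step, suppose the bound holds for some $t$ with $s \le t < n$. The crucial monotonicity observation is that once $\arc_s^t$ is closed it remains $\arc_s^{t+1}$ verbatim (one cannot adjoin a new element $t+1$ to an already-closed cycle), so $\{\arc_s^{t+1} \text{ open}\} \subseteq \{\arc_s^t \text{ open}\}$. Conditioning on $\CF_t$ and applying the tower property,
\[
\P_{s-1}[\arc_s^{t+1} \text{ open}] = \E_{s-1}\bigl[\1_{\{\arc_s^t \text{ open}\}}\cdot \P_t[\arc_s^{t+1} \text{ open}]\bigr].
\]
On the event $\{\arc_s^t \text{ open}\} = \{\arc_s^t \in \CO_t\}$, the bound \eqref{upper_bound_arc_t} immediately gives $\P_t[\arc_s^{t+1} \text{ open}] \geq 1 - \bigl(\tfrac{1-q}{1-q^{n-t}}\bigr)^2$. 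Substituting this deterministic lower bound and applying the inductive hypothesis to $\P_{s-1}[\arc_s^t \text{ open}]$ yields the claim at level $t+1$.

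There is no real obstacle here; the argument is essentially a clean bookkeeping exercise once one recognizes the two facts that (i) the openness of the arc is monotone along the diagonal exposure in the sense described, and (ii) closing an open arc at step $t+1$ is precisely the ``head-and-tail match'' event governed by Lemma~\ref{lemma_U_arc_eq}, which is the content of the already-established inequality \eqref{upper_bound_arc_t}. The only mildly delicate point is handling the base case $t=s$ with Proposition~\ref{upper_bound_arc_s} rather than with \eqref{upper_bound_arc_t}, because at the very first step the arc may either close by a new fixed point or through any of the currently open arcs; this is why the leading factor $\tfrac{q-q^{n-s+1}}{1-q^{n-s+1}}$ in the statement differs in shape from the factors appearing in the product.
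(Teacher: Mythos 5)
Your proof is correct and follows exactly the route the paper intends: the base case $t=s$ via the upper bound in Proposition~\ref{upper_bound_arc_s} (using $\{\arc_s^s\text{ closed}\}=\{s=\max(\CC_s)\}$), and the inductive step via \eqref{upper_bound_arc_t} together with the monotonicity $\{\arc_s^{t+1}\text{ open}\}\subseteq\{\arc_s^t\text{ open}\}$. The paper states this corollary without written details ("we derive by induction"), and your write-up supplies precisely those details.
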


Now we state the main proposition.
\begin{proposition}\label{cycle_diam_prop_low}
   Let $1\leq s < n$ and set $r:=\min\{s+\xi^2, n-1\}$. Then for $q\geq \frac{1}{2}$ we have
    \[
        \P_{s-1}[\arc_s^r\text{ is open}]\gtrsim 1.
    \]
\end{proposition}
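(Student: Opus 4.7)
The plan is to apply Corollary~\ref{diam_lower_corr} with $t = r$, which gives
\[
\P_{s-1}[\arc_s^r \text{ is open}] \geq \frac{q - q^{n-s+1}}{1-q^{n-s+1}} \prod_{s \le i < r}\left(1 - a_i^2\right), \quad a_i := \frac{1-q}{1-q^{n-i}},
\]
and then separately show that both the prefactor and the product are bounded below by an absolute constant.

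For the prefactor, I would write it as $q \cdot \frac{1-q^{n-s}}{1-q^{n-s+1}}$ and verify the elementary inequality $\frac{1-q^m}{1-q^{m+1}} \geq \frac{1}{1+q}$ for every integer $m \geq 1$ (which follows by writing the denominator as $(1-q) + q(1-q^m)$ and using $1-q^m \geq 1-q$). Since $s < n$ gives $m = n-s \geq 1$, and $q \geq \tfrac{1}{2}$, this shows the prefactor is at least $\tfrac{1}{4}$.

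The core of the argument is bounding the product. Note that for every $i$ in the range of the product, $n-i \geq n-r+1 \geq 2$, so $a_i = 1/(1+q+\cdots+q^{n-i-1}) \leq 1/(1+q) \leq 2/3$, whence $a_i^2 \leq 4/9$. It therefore suffices, after taking logs and using $\log(1-x) \geq -Cx$ valid for $x \in [0, 4/9]$, to bound $\sum_{s \leq i < r} a_i^2$ by an absolute constant. I would split the sum according to whether $n-i \geq \xi$ or $n-i < \xi$. In the first regime, $q^{n-i} \leq \tfrac{1}{2}$ gives $a_i \leq 2(1-q) \lesssim 1/\xi$, so each of the at most $r-s \leq \xi^2$ terms contributes $\lesssim 1/\xi^2$, with total contribution $\lesssim 1$. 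In the second regime, $a_i \leq 2/(n-i)$ (using $q \geq \tfrac{1}{2}$), and summing $4/(n-i)^2$ over $n-i \in \{2, 3, \ldots, \xi-1\}$ is controlled by $4\sum_{j \geq 2} j^{-2} \lesssim 1$. Combining these two bounds yields $\prod_{s \leq i < r}(1-a_i^2) \gtrsim 1$, and multiplying by the prefactor bound completes the proof.

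The routine steps are the elementary inequalities for geometric sums; the only mildly delicate point is ensuring that we have truncated $r$ at $n-1$ rather than $n$, which is precisely what keeps $n-i \geq 2$ and thus $a_i$ bounded away from $1$. If instead we allowed $i = n-1$ (so $n-i = 1$), we would have $a_i = 1$ and the corresponding factor would vanish; the hypothesis $r \leq n-1$ is exactly what prevents this degeneracy.
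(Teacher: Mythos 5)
Your proposal is correct and follows essentially the same route as the paper: apply Corollary~\ref{diam_lower_corr} at $t=r$, lower-bound the prefactor $\frac{q-q^{n-s+1}}{1-q^{n-s+1}}$ by an absolute constant, and then lower-bound the product by showing each factor stays away from zero and that $\sum_{s\le i<r}\left(\frac{1-q}{1-q^{n-i}}\right)^2\lesssim 1$. The only cosmetic difference is that you split the last sum into the regimes $n-i\ge\xi$ and $n-i<\xi$, while the paper invokes the combined estimate $\left(\frac{1-q}{1-q^t}\right)^2\lesssim(1-q)^2+\frac{1}{t^2}$ from \eqref{eq:q_k_approx}; these are equivalent.
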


Observe that equation \eqref{max_equation_low} in the case $q\geq\frac{1}{2}$ is an immediate corollary of this proposition.
One may also verify that Proposition~\ref{upper_bound_arc_s} yields \eqref{max_equation_low} in the case $q\leq\frac{1}{2}$.
Hence it suffices to prove Proposition~\ref{cycle_diam_prop_low}.

We use the following calculus fact: for a sequence $(x_i)$ with
$x_i\in[0,1)$ one has
\begin{equation} \label{prod_bound_sum}
    \text{if $\sum_i x_i\lesssim 1$ and $1-x_i\gtrsim 1$ for all $i$, then $\prod_i (1-x_i)\gtrsim
    1$.}
\end{equation}
\begin{proof}[Proof of Proposition \ref{cycle_diam_prop_low}]
    Due to Corollary \ref{diam_lower_corr} and the fact that $\frac{q-q^{n-s+1}}{1-q^{n-s+1}}\geq \frac{q-q^2}{1-q^2}\geq \frac{1}{3}$, it suffices to
    verify that
    \[
        \prod_{s\le i< r} \left( 1-\left(\frac{1-q}{1-q^{n-i}} \right)^2 \right) \gtrsim 1.
    \]
    Due to \eqref{prod_bound_sum} we need only verify that one has
    \[
       1-\left(\frac{1-q}{1-q^{n-i}} \right)^2\gtrsim 1, \quad \text{for $s\le i<r$, and}\quad\sum_{s \le i<r} \left(\frac{1-q}{1-q^{n-i}} \right)^2 \lesssim 1.
    \]
    The first inequality follows from the fact that $q\geq \frac{1}{2}$ and $i<n-1$. For the second inequality, observe that
    \[
         \sum_{s \le i<r} \left(\frac{1-q}{1-q^{n-i}} \right)^2 \leq \sum_{t=1}^{r-s} \left(\frac{1-q}{1-q^t} \right)^2\leq \sum_{t=1}^{\xi^2} \left(\frac{1-q}{1-q^t} \right)^2
    \lesssim \sum_{t=1}^{\xi^2} \left((1-q)^2 + \tfrac{1}{t^2}\right) \lesssim 1,
    \]
    where we applied the estimate $\left(\frac{1-q}{1-q^t} \right)^2\lesssim (1-q)^2 + \frac{1}{t^2}$ which follows from \eqref{eq:q_k_approx}.
\end{proof}

\noindent {\bf Upper bound:} We proceed to establish the upper bound
\begin{equation}\label{max_equation_upp}
    \E[\max(\CC_s)-s]\lesssim \min\{q\cdot\xi^2, \ n-s\}.
\end{equation}
We are going to state analogues to \eqref{upper_bound_arc_t} and
Corollary~\ref{diam_lower_corr} for the upper bound.
Lemma~\ref{lemma_U_arc_eq} implies that
\begin{equation} \label{arc_lower_bound}
    \text{for $s\le t < n$,} \quad
        \P_{t} \left[\arc_{s}^{t+1}\text{ is closed}\right]
        \geq \left( \frac{q^{\kappa_t}-q^{\kappa_t+1}}{1-q^{n-t}} \right)^2.
\end{equation}
\begin{proposition}\label{diam_upper_corr}
    For $1\leq s\leq t \leq n$ and $d\geq 0$ we have
    \begin{equation} \label{diam_upper_corr_eq}
        \P_{s}\left[\arc_s^t \text{ is open and $\kappa_i\leq d$ for all $i\in[s,t]$}\right]\leq
        \exp\left(-(t-s)\cdot (1-q)^2\cdot q^{2d}\right).
    \end{equation}
\end{proposition}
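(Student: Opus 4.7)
The plan is to iterate the lower bound \eqref{arc_lower_bound} on the closing probability along the diagonal exposure filtration $(\CF_t)$. The key observation is that on the event $\{\arc_s^i \in \CO_i\}\cap\{\kappa_i\le d\}$, the conditional probability that the arc closes in the very next step is bounded below by
\[
\P_i[\arc_s^{i+1}\text{ is closed}]\ \ge\ \left(\frac{q^{\kappa_i}-q^{\kappa_i+1}}{1-q^{n-i}}\right)^{\!2}\ \ge\ (1-q)^2 q^{2d},
\]
where the first inequality is \eqref{arc_lower_bound} and the second uses $q^{\kappa_i}(1-q)\ge q^d(1-q)$ together with $1-q^{n-i}\le 1$. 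Consequently, on the same event, the complementary probability that $\arc_s^{i+1}$ remains open is at most $1-(1-q)^2 q^{2d}$.

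To turn this into the stated exponential bound I would introduce the nested events
\[
A_t := \{\arc_s^t\text{ is open}\}\cap\{\kappa_i\le d\text{ for all }i\in[s,t]\},
\]
so that $A_{t+1}\subseteq A_t$ and $A_t\in\CF_t$. Since the bound above is $\CF_i$-measurable and valid pointwise on $A_i\supseteq A_{i+1}\setminus\{\kappa_{i+1}\le d\}$, a single step of the tower property gives
\[
\P_s[A_{t+1}]\ =\ \E_s\!\left[\1_{A_t}\cdot\P_t\bigl[\arc_s^{t+1}\text{ is open and }\kappa_{t+1}\le d\bigr]\right]\ \le\ \bigl(1-(1-q)^2 q^{2d}\bigr)\,\P_s[A_t].
\]
Iterating this inequality from time $s$ up to time $t$ (the base case $\P_s[A_s]\le 1$ is trivial) yields
\[
\P_s[A_t]\ \le\ \bigl(1-(1-q)^2 q^{2d}\bigr)^{t-s}\ \le\ \exp\!\bigl(-(t-s)(1-q)^2 q^{2d}\bigr),
\]
using $1-x\le e^{-x}$ in the last step; note $(1-q)^2 q^{2d}\in[0,1]$ so the base of the geometric is non-negative.

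I do not anticipate a serious obstacle: the whole argument is really just \eqref{arc_lower_bound} combined with the elementary pointwise bound $q^{\kappa_i}\ge q^d$ on $\{\kappa_i\le d\}$ and a standard martingale-style iteration along $(\CF_t)$. The only point deserving care is the bookkeeping that the event conditioned on in each step is $\CF_t$-measurable and that $\{\arc_s^{t+1}\text{ is open}\}\subseteq\{\arc_s^{t+1}\text{ is not closed}\}$ on $\{\arc_s^t\in\CO_t\}$, so that \eqref{arc_lower_bound} translates directly into an upper bound on the survival probability.
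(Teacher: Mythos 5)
Your proof is correct and follows essentially the same route as the paper: you define the same nested events (the paper's $E_r$ equals your $A_r$), invoke the same lower bound \eqref{arc_lower_bound} on the single-step closing probability with $q^{\kappa_i}\ge q^d$ on $\{\kappa_i\le d\}$, and iterate the resulting geometric bound with $1-x\le e^{-x}$.
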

\begin{proof}For $s< r \leq n$ we set
    \[
        E_r := \{ \arc_s^r \text{ is open and $\kappa_i\leq d$ for all $i\in[s,r]$} \}.
    \]
    Via \eqref{arc_lower_bound} one may verify that for such $r$,
    \begin{align*}
         \P_s[E_r] &\le \P_s[E_{r-1}\cap \{\text{$\arc_s^r$ is
         open}\}] \le \P_s[E_{r-1}]\cdot\left(1- \left( \frac{q^{d}-q^{d+1}}{1-q^{n-r+1}} \right)^2\right)\\
    &\leq \P_s[E_{r-1}]\cdot\exp\left(-(1-q)^2\cdot q^{2d}\right).
  \end{align*}
 By applying this inequality for all $s< r \le t$ we obtain inequality \eqref{diam_upper_corr_eq}.
\end{proof}

\begin{proposition}\label{diam_upper_prop_main}
    For $1\le s \le n$ one has
    \[
        \E[(\max (\CC_s)-s)^2]\lesssim \min\{q\cdot \xi^4,(n-s)^2\}.
    \]
\end{proposition}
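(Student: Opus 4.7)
The plan is as follows. Writing $Y := \max(\CC_s) - s$, the trivial inequality $Y \leq n-s$ immediately gives $\E[Y^2]\leq(n-s)^2$, so it remains to establish $\E[Y^2]\lesssim q\xi^4$. I will express
\begin{equation*}
    \E[Y^2] = \sum_{t=1}^{n-s}(2t-1)\,\P[Y\geq t]
\end{equation*}
and use the identity $\{Y\geq t\} = \{\arc_s^{s+t-1}\in\CO_{s+t-1}\}$ (which holds because $\max(\CC_s)\geq s+t$ precisely when the $[s+t-1]$-maximal arc containing $s$ is not yet closed) to reduce matters to bounding survival probabilities of the arc of $s$.

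For $q\leq \tfrac{1}{2}$, I appeal to the fact that when $\kappa_{T_s}=0$, where $T_s := \min\{t\geq s:\kappa_t=0\}$, every cycle meeting $[T_s]$ is closed and contained in $[T_s]$; in particular $\CC_s\subseteq[T_s]$, so $Y\leq T_s-s$. Theorem~\ref{mallows_chain_return_time} applied with $\eps=\tfrac{1}{2}$ then yields $\E[(T_s-s)^2]\lesssim q$, and since $\xi\geq 1$ we obtain $\E[Y^2]\lesssim q\leq q\xi^4$, as desired.

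For $q\geq \tfrac{1}{2}$, where $\xi\approx (1-q)^{-1}$, the goal becomes $\E[Y^2]\lesssim\xi^4$. For each $t$ and a threshold $d$ to be chosen, I decompose
\begin{equation*}
    \P[Y\geq t] \leq \P[\arc_s^{s+t-1}\in\CO_{s+t-1},\ \kappa_i\leq d\ \text{for}\ i\in[s,s+t-1]] + \P[\exists\, i\in[s,s+t-1]:\kappa_i > d].
\end{equation*}
Proposition~\ref{diam_upper_corr} bounds the first term by $\exp(-(t-1)(1-q)^2 q^{2d})$, while a union bound combined with Theorem~\ref{mallows_chain_bounds} controls the second. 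Taking $d=2\xi$, the elementary estimates $(1-q)^2\gtrsim\xi^{-2}$ and $q^{4\xi}\gtrsim 1$ (valid for $q\geq \tfrac{1}{2}$) reduce the first term to $\exp(-c(t-1)/\xi^2)$, whose contribution to $\E[Y^2]$ is of order $\sum_t t\,e^{-ct/\xi^2}\lesssim\xi^4$.

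The delicate step is controlling the union-bound error. A constant threshold $d=2\xi$ produces a term of order $(n-s)\cdot q^{\xi^2+\xi}$ which, combined with $Y\leq n-s$, yields an unacceptable $(n-s)^3 q^{\xi^2}$ contribution when $n-s$ is very large. I will instead use an adaptive threshold $d_t = 2\xi + \lceil C\sqrt{\xi\log(t+2)}\,\rceil$: Theorem~\ref{mallows_chain_bounds}, together with the lower bound on $\xi(1-q)$, makes $\P[\kappa_i > d_t]$ decay faster than any fixed polynomial in $t$ by choosing $C$ large, so that the union-bound contribution sums to $\lesssim 1$. With this choice, the first-term exponent becomes $\exp(-c(t-1)q^{2C\sqrt{\xi\log(t+2)}}/\xi^2)$, still decaying at rate $\approx e^{-ct/\xi^2}$ in the range $\log t\lesssim\xi$ and overwhelmingly faster for $\log t\gg\xi$, so its total contribution remains $\lesssim\xi^4$. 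The main obstacle is precisely this calibration: the threshold must grow with $t$ to suppress the bad-event contribution uniformly in $n$, yet slowly enough to preserve the first-term decay. Summing both parts then completes the bound.
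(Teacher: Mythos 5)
Your proposal is correct and follows essentially the same route as the paper: the $(n-s)^2$ bound is trivial, small $q$ is handled via Theorem~\ref{mallows_chain_return_time} and the cut-point property of $\{\kappa_t=0\}$, and for $q\ge\tfrac12$ one combines Proposition~\ref{diam_upper_corr} under a $t$-dependent arc-chain threshold with Theorem~\ref{mallows_chain_bounds} for the complementary event. The only difference is cosmetic: the paper calibrates the threshold as $\xi+\lceil\tfrac14\log_{1/q}\tfrac{t-s}{\xi^2}\rceil$ rather than your $2\xi+\lceil C\sqrt{\xi\log(t+2)}\rceil$, and both choices balance the two terms in the same way thanks to the $q^{d^2}$ decay in Theorem~\ref{mallows_chain_bounds}.
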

This proposition implies \eqref{max_equation_upp} for $q \geq
\frac{1}{2}$ using the fact that $\E[X^2]\geq \E[X]^2$ for any
integrable random variable $X$. For $q \leq \frac{1}{2}$ we use the
fact that $\max (\CC_s)-s\leq (\max (\CC_s)-s)^2$ to again deduce
\eqref{max_equation_upp} from the same proposition.

Proposition~\ref{diam_upper_prop_main} is stronger than what we need
here as it bounds the second moment of $\max (\CC_s)-s$, but this
extra strength will be used in the proof of Theorem~\ref{cycle_var}.
\begin{proof}
    The inequality $\max (\CC_s)-s\leq n-s$ holds true by definition.
    So we need only verify $\E[(\max (\CC_s)-s)^2]\lesssim q\cdot \xi^4$.
    In the regime when $q$ is bounded away from~$1$, the inequality is a direct consequence of Theorem~\ref{mallows_chain_return_time} due to:
    \[
         \kappa_t=0 \quad \implies \quad \text{$\max(\CC_i)\le t$ for all $i\leq
         t$},
    \]
    as $\kappa_t$ counts the number of open arcs at time $t$, see
    \eqref{eq:arc_chain_rel_arcs}.
    Thus to complete the proof of proposition it suffices to verify that
     \begin{equation}\label{eq:diam_up_bound}
        \E[(\max(\CC_s)-s)^2]\lesssim \xi^4, \quad \text{for $q$ sufficiently close to $1$}.
     \end{equation}
    We assume without loss of generality that $n> s + 2\xi^2$. Our starting point is the inequality
   \begin{equation}\label{eq:max_def}
        \E[(\max(\CC_s)-s)^2] \leq 4\xi^4 + 3\sum_{t= s+2\xi^2}^{n-1} (t-s)\cdot \P[\max(\CC_s) >
        t].
   \end{equation}
   Define
     \[
        d_t:=\left\lceil\tfrac{1}{4}\log_{1/q}\left(\tfrac{t-s}{\xi^2}\right)\right\rceil \quad \text{and} \quad
        E_t:=\{\kappa_i\leq d_t+\xi \text{ for all $i\in[s,t]$}\}.
    \]
    Now write
    \begin{equation*}
      \sum_{t= s+2\xi^2}^{n-1} (t-s) \P[\max(\CC_s) >
        t] \le \sum_{t= s+2\xi^2}^{n-1} (t-s) \P[E_t^\complement] + \sum_{t= s+2\xi^2}^{n-1} (t-s) \P[\{\max(\CC_s) >
        t\}\cap E_t].
    \end{equation*}
    We estimate the two sums separately. For the first sum, using
    Theorem~\ref{mallows_chain_bounds} and the observation that $d_t\ge 12$ for $t\ge s+2\xi^2$
    as we assumed that $q$ is sufficiently close to $1$, we obtain
    \begin{equation*}
      \sum_{t= s+2\xi^2}^{n-1} (t-s) \P[E_t^\complement] \lesssim \sum_{t= s+2\xi^2}^{n-1} (t-s) q^{12d_t}\le \sum_{t= s+2\xi^2}^{n-1}
      (t-s)\left(\tfrac{\xi^2}{t-s}\right)^3\lesssim \xi^4.
    \end{equation*}
    For the second sum, Proposition~\ref{diam_upper_corr} implies
    that
    \begin{align*}
     \sum_{t= s+2\xi^2}^{n-1} (t-s) \P[\{\max(\CC_s) >
        t\}\cap E_t] &\le  \sum_{t= s+2\xi^2}^{n-1} (t-s) \exp\left(-(t-s)(1-q)^2
        q^{2d_t+2\xi}\right) \\ &\leq \sum_{t \geq s+2\xi^2} (t-s) \exp\left(-(t-s)(1-q)^2
        \tfrac{q^{2+2\xi}\xi}{\sqrt{t-s}}\right)\\
        &\leq \sum_{t \geq s+2\xi^2} (t-s) \exp\left(-c\sqrt{t-s}(1-q)\right)\lesssim \xi^4,
    \end{align*}
    for a positive absolute constant $c>0$, where the last estimate is not difficult to check directly.
\end{proof}

\end{subsection}

\begin{subsection}{Expected Cycle Length}\label{sec:cycle_length}
    In this section we prove Theorem~\ref{cycle_length}. As $|\CC_s|\leq\diam(\CC_s)+1$
    the upper bound for $\E|\CC_s|$ follows immediately from the upper bound on the diameter of $\CC_s$ proved in
    Theorem~\ref{cycle_diam_theorem}. Thus we need only prove the lower bound, namely that
         \begin{equation}\label{eq:cycle_len_lower_bound}
                \E|\CC_s|\gtrsim \min\{\xi^2,n\}.
    \end{equation}
    Since $|\CC_s|\geq 1$ we may and will restrict, for the rest of the section, to the regime
    where $q$ is sufficiently close to $1$ and $n$ is sufficiently
    large.

    Our starting point is the formula
    \[
            \E|\CC_s| = \sum_{t\in[n]} \P[t\in\CC_s].
        \]
    The lower bound \eqref{eq:cycle_len_lower_bound} is an immediate consequence of this formula combined with the next lemma.
    \begin{lemma} \label{lm:al_proof_cycle_length}
        Let $s,t\in [n]$ satisfy $|t-s| \leq \xi^2$ then
        \[
            \P[t\in\CC_s] \gtrsim 1.
        \]
    \end{lemma}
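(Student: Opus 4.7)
The plan is to show that with probability bounded below by a universal constant, the points $s$ and $t$ end up in the same $[r]$-maximal arc at a suitable time $r \geq \max(s,t)$, which places $t$ in $\CC_s$. By the reversal symmetry \eqref{rev_symmetry}, which preserves cycle structure, one may assume $s \leq t$ (the case $s = t$ being trivial). The regime of interest is $q$ close to $1$, as explicitly assumed just before the lemma. Set $r := \min\{t + \xi^2,\,n-1\}$, so that $r - t \gtrsim \xi^2$ in the main regime $t + \xi^2 \leq n - 1$; the residual boundary case is handled by a symmetric argument after applying \eqref{rev_symmetry} again.

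The argument rests on two probabilistic ingredients. First, a union bound applied to Theorem~\ref{mallows_chain_bounds} over the $O(\xi^2)$ times $u \in [s,r]$ shows that the event $G := \{\kappa_u \leq C\xi \text{ for all } u \in [s,r]\}$ has probability $\gtrsim 1$ for a suitably large absolute constant $C$, since the tail $q^{(C-1)^2\xi^2 + (C-1)\xi}/(1-q^{2(C-1)\xi})$ is easily summable over the window for $C$ large. Second, Lemma~\ref{lemma_U_arc_eq}, combined with the conditional independence of the head and tail events, yields the following step-wise bounds for the joint dynamics of $(\arc_s^u,\arc_t^u)$: on $G$ and on the event that both arcs are open and distinct at time $u-1$, the conditional probability that the new element $u$ \emph{merges} the two arcs is $\gtrsim q^{2C\xi}(1-q)^2 \gtrsim (1-q)^2$, while the conditional probability that either arc \emph{closes without merging them} is, by the same lemma (cf.\ \eqref{upper_bound_arc_t}), $\lesssim (1-q)^2$. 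A merge identifies $\arc_s^r$ with $\arc_t^r$ and thus forces $t \in \CC_s$; a closure without merger is fatal.

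Chaining these estimates telescopically over $u \in (t,r]$, the first \emph{decisive} step (the first merge or first closure of either arc) is a merge with conditional probability $\gtrsim c/(c+C')$, a positive universal constant given by the ratio of per-step merge to total decisive rate, and at least one decisive step occurs within $(t,r]$ with probability $\geq 1 - \exp(-c''(1-q)^2(r-t)) \gtrsim 1$ since $(r-t)(1-q)^2 \gtrsim 1$. Multiplying these lower bounds against $\P[G]$ yields $\P[\arc_s^r = \arc_t^r] \gtrsim 1$, hence $\P[t \in \CC_s] \gtrsim 1$, as required.

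The main obstacle anticipated is the bookkeeping in the telescoping step: conditioning on $G$ and on ``both arcs open and distinct at time $u-1$'' must not degrade the lower bound on the per-step merge probability, which relies on Lemma~\ref{lemma_U_arc_eq} applied with $\kappa_{u-1} \leq C\xi$. This is handled by replacing these events with their $\CF_{u-1}$-measurable versions and invoking the lemma conditionally on $\CF_{u-1}$ at every step, so that the merge and close probabilities can be compared on equal footing throughout, and the telescoping product lower bound survives the restriction to $G$.
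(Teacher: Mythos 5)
Your overall strategy---a race on the window $(t,r]$ between the event that the newly exposed element $u$ merges $\arc_s^{u-1}$ with $\arc_t^{u-1}$ and the fatal event that one of them closes, with per-step probabilities supplied by Lemma~\ref{lemma_U_arc_eq} and the arc chain controlled via Theorem~\ref{mallows_chain_bounds}---is essentially the paper's argument (the paper sums the probabilities $\P[V_r]$ that the two arcs merge exactly at time $r$, each $\gtrsim(1-q)^2$, over $\approx\xi^2$ values of $r$). However, there are two genuine gaps. First, your race starts at time $t$ from the event that ``both arcs are open and distinct at time $t$'', but you never show that this event has probability $\gtrsim 1$. In particular you need $\arc_s^{t}$ to still be \emph{open} at time $t$, i.e.\ the arc of $s$ must survive the up-to-$\xi^2$ exposure steps in $[s,t]$ without closing into a cycle; each such step closes it with probability of order $\bigl(\tfrac{1-q}{1-q^{n-i}}\bigr)^2\gtrsim(1-q)^2$ and there are order $\xi^2\approx(1-q)^{-2}$ of them, so a union bound is useless and one needs the product estimate of Proposition~\ref{cycle_diam_prop_low}. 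Without this input your claim that ``at least one decisive step occurs within $(t,r]$ with probability $\geq 1-\exp(-c''(1-q)^2(r-t))$'' is false: if $\arc_s$ has already closed before time $t$, no decisive step can ever occur, and you have no bound on the probability of that event.

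Second, the boundary case is not actually covered by the reversal symmetry. If both $s$ and $t$ lie within $O(\xi^2)$ of $n$ (in particular whenever $n\lesssim\xi^2$, which is allowed even with $q$ close to $1$ and $n$ large), then after applying \eqref{rev_symmetry} the reflected pair is in the same predicament, and your window $(t,r]$ with $r=\min\{t+\xi^2,n-1\}$ can be empty (e.g.\ $t=n$). Relatedly, your uniform per-step bounds ``merge $\gtrsim(1-q)^2$, close $\lesssim(1-q)^2$'' are incorrect near the right edge: the correct scale is $\bigl(\tfrac{1-q}{1-q^{n-u+1}}\bigr)^2$, which tends to a constant as $u\to n$. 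This is precisely what rescues the boundary case---near $n$ a \emph{single} step already merges the arcs with probability $\gtrsim 1$, which is how the paper treats it via the events $V_t$ and $V_{n-3}$---but your argument as written does not exploit it. Both gaps are repairable with the paper's own tools (the survival estimate is the same telescoping product you already use on $(t,r]$, applied on $[s,t]$), but as it stands the proof is incomplete.
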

    \begin{proof}
    It suffices to consider the case $s<t$. Define the events $(V_r)$, $t\le r\le n$ by
     \[
        V_t :=\{\arc_s^{t}=\arc_t^{t}\} \quad \text{and} \quad
        V_r:=\left\{ \arc_s^{r}=\arc_t^{r} \text{ and }\ \arc_s^{r-1}\neq \arc_t^{r-1}\right\}
        \text{ for $r>t$.}
    \]
    By definition, $V_r$ occurs if and only if the arc of $s$ and the arc of $t$ merge exactly at time~$r$.
    Hence $\{t\in\CC_s\}$ is the disjoint union $\bigsqcup_{t\leq r\leq n}V_r$,
    yielding that
    \[
        \P[t\in\CC_s]=\sum_{t\leq r\leq n}\P[V_r].
    \]
    We shall prove the following estimates,
    \begin{equation}\label{eq:cycle_length_lemma_eq_main}
    \P[V_t]\gtrsim \frac{1-q}{1-q^{n-t+1}} \quad \text{and} \quad
        \P [V_r]\gtrsim \left(\frac{1-q}{1-q^{n-r+1}} \right)^2\quad \text{for $t<r < \min \{t+ \xi^2, n-2\}$.}
    \end{equation}
    The lemma follows easily from these, as if $t+\xi^2 < n-2$ one
    may sum the estimates for $\P [V_r]$ and otherwise it suffices
    to consider only the estimate for $\P [V_t]$ or $\P [V_{n-3}]$.
    Let us prove these estimates. The following inequality is an immediate consequence of
    Lemma~\ref{lemma_U_arc_eq},
    \begin{equation}\label{eq:V_t_estimate}
               \P_{t-1}[V_t] \geq q^{2\xi}\cdot\tfrac{1-q}{1-q^{n-t+1}}\cdot\1_{A\cap\{\kappa_{t-1}\le 2\xi\}}, \quad \text{where $A:=\{\text{$\arc_s^{t-1}$ is open}\}$}.
    \end{equation}
    Thus the estimate for $\P[V_t]$ follows by noting that $\P[A\cap\{\kappa_{t-1}\le
    2\xi\}]\gtrsim 1$ for $q$ sufficiently close to $1$. Indeed, $\P[A]\gtrsim 1$ by
    Proposition~\ref{cycle_diam_prop_low} and $\P[\kappa_{t-1}\le
    2\xi]$ tends to~$1$ as $q$ tends to
    $1$, uniformly in $t$ and $n$, by Theorem~\ref{mallows_chain_bounds}.

    We proceed to estimate $\P [V_r]$ for $t<r < \min \{t+ \xi^2,
    n-2\}$. Define
    \begin{equation*}
      B_i:=\{\text{$\arc_s^{i}$ and $\arc_t^{i}$ are open and
      distinct}\},\quad t\le i\le n.
    \end{equation*}
    Similarly to \eqref{eq:V_t_estimate}, we have
    \[
      \P_{r-1}[V_r] \geq q^{4\xi}\cdot\left(\tfrac{1-q}{1-q^{n-t+1}}\right)^2\cdot\1_{B_{r-1}\cap\{\kappa_{r-1}\le 2\xi\}}
    \]
    and, as before, it suffices to show that $\P[B_{r-1}]\gtrsim 1$. Lemma~\ref{lemma_U_arc_eq} implies that for $i>t$,
    \begin{align*}
      \P[B_i] &= \E \left[\1_{B_{i-1}}\cdot \left(1 -
      \P_{i-1}\left[
        \begin{array}{l}
        \pi_i\in\{\tail(\arc_s^{i-1}),\tail(\arc_t^{i-1})\}, \\
      \pi_i^{-1}\in\{\head(\arc_s^{i-1}),\head(\arc_t^{i-1})\} \end{array}
      \right] \right)\right]\\ &\ge
      \P[B_{i-1}] \cdot \left(1 -
      4\left(\tfrac{1-q}{1-q^{n-i+1}}\right)^2\right).
    \end{align*}
    Furthermore, for $i=t$ we obtain using Proposition~\ref{upper_bound_arc_s}
    and Lemma~\ref{lemma_U_arc_eq} that
    \begin{align*}
      \P[B_t] &= \P[A \cap \{\pi_t \neq \tail(\arc_s^{t-1}), \, \pi^{-1}_t \neq \head(\arc_s^{t-1}), \, t\neq
      \max(\CC_t)\}]\\
                &\geq \P[A]\cdot \left(1 - 3\cdot \tfrac{1-q}{1-q^{n-t+1}}\right).
    \end{align*}
    Combining these inequalities with the estimate
    $\P[A]\gtrsim 1$ proved previously shows that
    \begin{equation*}
      \P[B_{r-1}] \gtrsim \left(1 - 3\cdot \tfrac{1-q}{1-q^{n-t+1}}\right)\prod_{t < i < r} \left(1 -  4\left(\tfrac{1-q}{1-q^{n-i+1}}\right)^2\right)
    \end{equation*}
    By estimating the product as in the proof of
    Proposition~\ref{cycle_diam_prop_low} we conclude that
    $\P[B_{r-1}]\gtrsim 1$, as we wanted to show.
    \end{proof}
\end{subsection}

\begin{subsection}{Variance of Cycle Length}\label{sec:variance_bounds} In this section we prove
Theorem~\ref{cycle_var}. Throughout we let $\pi\sim\mu_{n,q}$. We
need to show that for every $s\in[n]$ we have
\begin{equation*}
    \var|\CC_s|\approx \min\left\{q\cdot \xi^4, (n-1)^2\right\}.
\end{equation*}
 We
divide the proof into 3 cases:
\begin{enumerate}[topsep=2pt,itemsep=1pt]
    \item The lower bound for $q\geq \frac{1}{8}$ follows from a general non-concentration argument together with Theorem~\ref{cycle_length}.
    \item The lower bound for $q\leq \frac{1}{8}$ is a corollary of Lemma~\ref{fix_point_equation}.
    \item The upper bound is a direct corollary of Proposition~\ref{diam_upper_prop_main}.
\end{enumerate}

\smallskip
{\bf Case 1:} Assume that $q\ge \frac{1}{8}$. Assume that $n>2$ as
the case $n=1$ is trivial. Further, using the reversal symmetry
\eqref{rev_symmetry}, assume that $s<n$.

We consider the following equivalence relation on $\S_n$: we say
that $\sigma_1\sim \sigma_2$ if
\[
     \sigma_1 \in\{\sigma_2,\ \tau\circ \sigma_2,\ \sigma_2\circ \tau,\ \tau\circ \sigma_2\circ \tau \} \quad \text{where $\tau:=(s,s+1)$.}
\]
Let $X$ be the random equivalence class of $\pi$ in this equivalence
relation. We shall prove that
\begin{equation}\label{eq:var_goal_cs1}
    \var(|\CC_s|\mid X) \gtrsim \E[|\CC_s|^2\mid X].
\end{equation}
The proof of the lower bound in this case follows from the
inequality together with Theorem~\ref{cycle_length} as
\[
    \var(|\CC_s|)\geq \E[ \var(|\CC_s|\mid X)] \gtrsim  \E[|\CC_s|^2] \geq \E[|\CC_s|]^2.
\]
Let us proceed with the proof of \eqref{eq:var_goal_cs1}. Composing
a permutation with an adjacent transposition, like $\tau = (s,s+1)$,
changes the number of inversions in the permutation exactly by $1$.
It follows that any two permutations $\sigma_1 \sim \sigma_2$
satisfy $|\inv(\sigma_1)-\inv(\sigma_2)|\leq 2$, whence
\eqref{eq:mu_n_q_def} implies that
\[
    \P[\pi=\sigma_1] \approx \P[\pi=\sigma_2].
\]
As $X$ is an equivalence class of $\S_n$ of size at most $4$, we
conclude that
\begin{equation}\label{eq:conditional_distr_pi_X}
    \P[\pi = \sigma \mid X]\approx 1, \quad \text{for all $\sigma \in X$.}
\end{equation}
The equivalence class $X$ necessarily contains a permutation $\rho$
satisfying $\CC_s(\rho) \neq \CC_{s+1}(\rho)$. Choosing such a
$\rho$, one checks that for each $\sigma\in X$, $|\CC_s(\sigma)|$ is
either $|\CC_s(\rho)|, |\CC_{s+1}(\rho)|$ or
$|\CC_s(\rho)|+|\CC_{s+1}(\rho)|$ and each of these values occurs
for some $\sigma\in X$. Thus \eqref{eq:var_goal_cs1} is a
consequence of \eqref{eq:conditional_distr_pi_X}.

\begin{remark}
  The above argument is a general argument for showing non-concentration of cycle lengths, i.e., that
  \begin{equation*}
    \var(|\CC_s|)\gtrsim\E[|\CC_s|^2].
  \end{equation*}
  It may be applied to other random permutation models satisfying
  the following assumption. There exists some $1\le t\le n$,
  different from $s$, for which
  \begin{equation*}
    \P[\sigma] \approx \P[\sigma \circ(s,t)]\approx
    \P[(s,t)\circ\sigma]\quad \text{for all $\sigma\in \S_n$}.
  \end{equation*}
\end{remark}

{\bf Case 2:} Assume now that $q\le \frac{1}{8}$. We need to prove
that
\[
    \var|\CC_s|\gtrsim q \quad \text{for $n>1$}.
\]
    It suffices to verify that
    \[
        1-\P[|\CC_s|=1] \gtrsim q \quad \text{and} \quad \P[|\CC_s|=1]\gtrsim 1.
    \]
    Let $\kappa$ be the arc chain of $\pi$.
    Due to the reversal symmetry \eqref{rev_symmetry} we may assume that $n-s\geq 1$.
    Lemma~\ref{fix_point_equation} implies that
    \[
        \P[|\CC_s|=1]   =\P[\pi_s=s]
                    =\E\left[\frac{q^{\kappa_{s-1}}-q^{\kappa_{s-1}+1}}{1-q^{n-s+1}}\cdot\frac{q^{\kappa_{s-1}}-q^{n-s+1}}{1-q^{n-s+1}}\right]
                    \leq\frac{1-q}{1-q^2}= \frac{1}{1+q}.
    \]
    Hence $1-\P[|\CC_s|=1]\geq\frac{q}{1+q}\gtrsim q$. Now we shall verify that $\P[|\CC_s|=1]\gtrsim 1$. Observe that
    \[
        \P[\pi_s=s] \geq \P[\pi_s=s\mid \kappa_{s-1}=0]\cdot \P[\kappa_{s-1}=0]\geq (1-q)\cdot \P[\kappa_{s-1}=0] \gtrsim \P[\kappa_{s-1}=0]
    \]
    by Lemma~\ref{fix_point_equation} and our assumption that $q\leq \frac{1}{8}$.
    The fact that $\P[\kappa_{s-1} = 0]\gtrsim 1$ follows from \eqref{mallows_corr_bounds} and Lemma~\ref{lem:stationary_distribution_bounds}.

\smallskip
{\bf Case 3:} We prove the upper bound for the variance of cycle length.
    Since $|\CC_s|\geq 1$ it follows that
    \begin{equation}\label{var_eq_1}
        \var|\CC_s|\leq \E[(|\CC_s|-1)^2]\leq\E[\diam(\CC_s)^2]\leq 2 \E[(\max(\CC_s)-s)^2+(s-\min (\CC_s))^2].
    \end{equation}
    Proposition~\ref{diam_upper_prop_main}, via the reversal symmetry \eqref{rev_symmetry}, states that
    \[
    \E[(\max(\CC_s)-s)^2]\lesssim
        \min\{q\xi^4,(n-s)^2\}\quad \text{and} \quad \E[(s-\min (\CC_s))^2]\lesssim \min\{q\xi^4,(s-1)^2\},
    \]
    which yields the upper bound via \eqref{var_eq_1}.
\end{subsection}

\begin{subsection}{Poisson-Dirichlet Law}\label{sec:pdl}
    In this section we prove Theorem~\ref{thm:pdl}. We need to prove two facts: that the normalized length $\frac{1}{n}|\CC_{s_n}|$ converges in
    distribution to the uniform distribution on $[0,1]$ for any sequence $(s_n)$ with $s_n\in[n]$ and that the distribution of the sorted and normalized cycle lengths converges to the Poisson-Dirichlet law.
    The proofs of these two facts are similar and we shall focus on
    the proof for the Poisson-Dirichlet law. At the end of the section
    we point out the needed modifications to obtain the limiting distribution of $\frac{1}{n}|\CC_{s_n}|$.

    \medskip{\bf The Poisson-Dirichlet law in a space of multisets of reals:}
    Denote by $\CD$ the space of sorted sequences
    $(\alpha_i)$, $i\ge 1$, $\alpha_1\ge\alpha_2\ge\ldots$ of non-negative reals
    with finite sum. The space $\CD$ is endowed with the product topology, the
    topology inherited from $\R^{\N}$. We shall consider also the $\ell^2$ metric on sequences in $\CD$ and take note of the fact that convergence in the $\ell^2$ metric implies convergence in the product topology.

    The Poisson-Dirichlet law with parameter one, denoted by $\CPD$, is a distribution on~$\CD$, supported on sequences
    with sum $1$. We will not need the precise definition of $\CPD$, instead relying only
    on its relation with uniform random permutations, and the reader
    is referred, e.g., to \cite{F10} for further background.
    Specifically, we shall use that if $\sigma$ is a uniformly random
    permutation in $\S_k$ and $\ell_1\ge\ell_2\ge\ldots$ are the
    sorted lengths of cycles in $\sigma$ then
    \begin{equation}\label{eq:pdl_convergence_eq1}
      \tfrac{1}{k}(\ell_1, \ell_2,\ldots)\stackrel{d}{\to}\CPD\quad\text{as
      $k\to\infty$},
    \end{equation}
    where a finite sequence is viewed as an element of $\CD$ by adding to it a
    trailing sequence of zeros.

    It is convenient to work with an alternative, equivalent, description of the space $\CD$. A sequence in $\CD$ may be equivalently
    described by a multiset of non-negative reals with finite sum (summing elements according to their multiplicities), in which the
    multiplicity of each number is the number of its occurrences in
    the sequence. Note that $0$ is the only number possibly having an infinite
    multiplicity in this representation. We denote the space of such multisets by
    $\CM$ and make the identification of $\CD$ and $\CM$ in the sequel, putting the induced topology on
    $\CM$, i.e., the push-forward of the product
    topology via the identification map. We denote by $d$ the metric on $\CM$ obtained as the push-forward of the $\ell^2$ metric on $\CD$. The metric $d$ has the following explicit expression:     
    given two multisets $X,Y\in \CM$,
    \begin{equation}\label{eq:multiset_metrics}
       d(X,Y) := \min_\varphi \sqrt{\sum_{x\in X} (\varphi(x)-x)^2},
    \end{equation}
    where the minimum is taken over all bijections $\varphi\colon X \to
    Y$ and it is understood that bijections may assign different images to multiple occurrences of the same element.
    To see that $d$ coincides with the push-forward of the $\ell^2$ metric on $\CD$ we note that the minimum in \eqref{eq:multiset_metrics} is obtained by taking $\varphi$ to be a monotone non-decreasing mapping.

    We will also consider multisets of finite cardinality of non-negative
    reals as members of $\CM$ by adding an infinite amount of zeros to the multiset.

    \medskip{\bf Convergence Criterion:}
    Here we state a convergence criterion to the Poisson-Dirichlet
    distribution which generalizes \eqref{eq:pdl_convergence_eq1}.
    We start with some definitions.

        For a permutation $\sigma\colon S \to S$ and a weight function $w\colon S \to \R$, where $S$ is a non-empty finite set,
    we define the weighted length function $\CL(\sigma,w)$ to be the multiset
        \begin{equation}\label{eq:cycle_weighted_length_multiset}
                \CL(\sigma,w) := \left\{\sum_{i\in\CC^1} w_i,\ldots,\sum_{i\in\CC^m} w_i \right\},
        \end{equation}
        where $\CC^1,\ldots,\CC^m$ is a decomposition of $S$ into orbits of $\sigma$.
        For instance, $\CL(\sigma,\1)$, where $\1$ denotes the constant $1$ function, is the multiset of cycle lengths of $\sigma$.

    The convergence in \eqref{eq:pdl_convergence_eq1} may be stated equivalently
    as follows: if $\sigma$ is uniform in $\S_k$ then $\CL(\sigma,\tfrac{1}{k}\1) \to \CPD$ as $k\to\infty$.
    The following proposition allows us to generalize this fact by `adding weights'.

    \begin{proposition}\label{prop:pdl_convergence_prop1}
    Let $S$ be a non-empty finite set, let $w\colon S\to[0,\infty)$ satisfy $\sum w_s\le 1$ and let $\sigma\colon S\to S$ be a uniformly random permutation.
    Then
    \[
        \E\big[d^2\big(\CL(\sigma,\tfrac{1}{|S|}\1), \CL(\sigma, w)\big)\big]\le
        \tfrac{1}{2}\Big(\big(1-\sum w_s\big)^2 +\sum
        (\tfrac{1}{|S|}-w_s)^2
        \Big).
    \]
    \end{proposition}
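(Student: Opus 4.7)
The plan is to use the natural cycle-by-cycle bijection in the definition \eqref{eq:multiset_metrics} of the metric $d$, and then to exploit the classical fact that two distinct elements lie in the same cycle of a uniform random permutation with probability $\tfrac{1}{2}$.

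For each realization of $\sigma$, let $\CC^1,\ldots,\CC^m$ denote the cycles and set $\eps_i := w_i - \tfrac{1}{|S|}$ for $i\in S$. Both multisets $\CL(\sigma,\tfrac{1}{|S|}\1)$ and $\CL(\sigma,w)$ consist of $m$ finite entries together with an infinite supply of zeros; consider the bijection $\varphi$ sending $\tfrac{|\CC^j|}{|S|}$ to $\sum_{i\in\CC^j} w_i$ for each $j$, and matching zeros to zeros. Plugging this specific $\varphi$ into \eqref{eq:multiset_metrics} gives the deterministic upper bound
\[
d^2\bigl(\CL(\sigma,\tfrac{1}{|S|}\1),\,\CL(\sigma,w)\bigr) \;\le\; \sum_{j=1}^{m}\Bigl(\sum_{i\in\CC^j}\eps_i\Bigr)^2 \;=\; \sum_{i,i'\in S}\eps_i\eps_{i'}\cdot\1\{\text{$i$ and $i'$ lie in a common cycle of $\sigma$}\}.
\]

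Taking expectations reduces the problem to computing $p(i,i'):=\P[\text{$i$ and $i'$ share a cycle of $\sigma$}]$ for a uniformly random permutation. Clearly $p(i,i)=1$, while for $i\ne i'$ one has $p(i,i')=\tfrac{1}{2}$: condition on the length $L$ of the cycle containing $i$, which is uniform on $\{1,\ldots,|S|\}$, and use $\P[i'\in\CC_i\mid L=k]=\tfrac{k-1}{|S|-1}$; summing gives $\tfrac{1}{2}$. Substituting,
\[
\E\bigl[d^2(\CL(\sigma,\tfrac{1}{|S|}\1),\CL(\sigma,w))\bigr] \;\le\; \sum_i\eps_i^2 \;+\; \tfrac{1}{2}\sum_{i\ne i'}\eps_i\eps_{i'} \;=\; \tfrac{1}{2}\sum_i\eps_i^2 + \tfrac{1}{2}\Bigl(\sum_i\eps_i\Bigr)^2.
\]
Finally, recognizing $\sum_i\eps_i = \sum_s w_s - 1$ and $\sum_i\eps_i^2 = \sum_s(w_s - \tfrac{1}{|S|})^2$ yields exactly the claimed inequality.

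There is no real obstacle in the argument; what needs a moment of care is only the verification that the cycle-by-cycle $\varphi$ is an admissible bijection between the two multisets (immediate, since both have $m$ finite entries padded by zeros) and the standard two-point cycle probability computation. The hypothesis $\sum w_s\le 1$ plays no role in the derivation itself; it is stated presumably so that the bound can be later combined with applications where the total weight is close to $1$.
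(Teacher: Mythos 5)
Your proof is correct and is essentially the same as the paper's: both bound $d^2$ by the cycle-matching bijection, expand as a double sum over pairs, and use that distinct points of a uniform permutation share a cycle with probability $\tfrac12$. Your remark that the hypothesis $\sum w_s\le 1$ is not actually used in the derivation is accurate.
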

    In particular, if $w=w(k)\colon [k]\to[0,\infty)$ is a sequence of
    tuples satisfying $\sum_{i=1}^k w_i\le 1$ and the limit relations $\sum_{i=1}^k w_{i} \to 1$ and $\max_{i\in [k]} w_i \to
    0$ as $k\to\infty$, then $\CL(\sigma,\tfrac{1}{k}\1)$ and $\CL(\sigma,
    w)$ share the same limit distribution, which is the $\CPD$
    distribution by \eqref{eq:pdl_convergence_eq1}.

    \begin{proof}[Proof of Proposition~\ref{prop:pdl_convergence_prop1}]
    Using the notation of \eqref{eq:cycle_weighted_length_multiset}, the following inequality follows from
    \eqref{eq:multiset_metrics},
    \begin{equation}\label{eq:pdl_convergence_peq1}
        d^2(\CL(\sigma,\tfrac{1}{|S|}\1), \CL(\sigma, w))\leq \sum_r \Big(\sum_{s\in\CC^r}( \tfrac{1}{|S|}-w_s)\Big)^2 =
        \sum_{s\in S}\sum_{t\in S}(\tfrac{1}{|S|}-w_s)(\tfrac{1}{|S|}-w_t)\cdot \1\{t\in \CC_s(\sigma)\}.
    \end{equation}
    For
    each $s\in S$, as
    $\P[t\in\CC_s(\sigma)] = \frac{1}{2}\1_{s=t}+\frac{1}{2}$,
    \begin{equation}\label{eq:pdl_convergence_peq2}
        \E\Big[ \sum_{t\in S}(\tfrac{1}{|S|}-w_s)(\tfrac{1}{|S|}-w_t)\cdot \1\{t\in \CC_s(\sigma)\}\Big]=
        \tfrac{1}{2}\Big((\tfrac{1}{|S|}-w_s)\cdot\big(1-\sum_{t\in S} w_t\big) +(\tfrac{1}{|S|}-w_s)^2\Big).
    \end{equation}
    The proposition follows by taking expectation in \eqref{eq:pdl_convergence_peq1} and substituting \eqref{eq:pdl_convergence_peq2}.
    \end{proof}

    {\bf Proof of the Poisson-Dirichlet Law:} Let $q=q_n$ satisfy
    \begin{equation}\label{eq:delocalized_regime}
    (1-q)^2\cdot n\to 0\quad\text{as $n\to\infty$}.
    \end{equation}
    Let $m=m_n\in\N$ be a sequence that converges to infinity sufficiently slowly so that
    \begin{equation}\label{eq:pdl_main_eq1}
        \frac{m}{n}\to 0 \quad \text{and} \quad q^{m^2} \to 1 \quad\text{as $n\to\infty$}.
    \end{equation}
    For instance, we may take $m = \lfloor n^{1/4}\rfloor$ or $m=\lceil \log(n+1)\rceil$. Let $\pi\sim\mu_{n,q}$. We shall analyze $\pi$ conditioned on $\Gamma_\pi \cap U$, where
    \[
        U:=\{x \leq n-m \text{ or }y > n-m\}.
    \]
    The plan is to to show that the lengths of the long cycles can be coupled closely with the lengths of cycles in a uniform permutation.
    To do it we first show that, despite the fact that $\Gamma_\pi \cap U$ is almost the whole graph of $\pi$, with high probability,
    it provides little to no information on the lengths of the long cycles of $\pi$.
    The lengths of the long cycles, given $\Gamma_\pi \cap U$,
    are mostly decided by $\Gamma_\pi \smallsetminus U$, the remaining portion of the graph.
    We shall then conclude by utilizing the fact that the relative order (see \eqref{eq:relative_order_permutation}) of $\Gamma_\pi \smallsetminus U$
    is very close to a uniformly distributed permutation.

    Our proof requires us to define permutations over a finite random set $\CO_U$
    and analyze the multiset $\CL(\cdot,w)$ of such permutations for various weight tuples $w$.
    The set $\CO_U$ may be empty, though this case does not impact on our analysis as its probability tends to~$0$.
    To avoid treating it in a special manner, as much as possible, we define the set
    of permutations over the empty set to consist
    of a single element denoted $\textup{id}_\emptyset$. This
    element has no cycles and, in particular, $\CL(\textup{id}_\emptyset,w) = \emptyset$ for all
    $w$.

    Let us begin the proof. We introduce additional definitions to discuss arcs which are determined by $\Gamma_\pi \cap U$.
    We say that an arc $\Fa$ of $\pi$ \emph{belongs} to $U$ if one may order its elements $\Fa=\{a_1,\ldots,a_{|\Fa|}\}$
    so that $a_{i+1}=\pi (a_i)$ and $(a_i,a_{i+1}) \in U$ for all $1\le i<|\Fa|$.
    If, in addition, $a_1=\pi(a_{|\Fa|})$ and $(a_{|\Fa|},a_1)\in U$ we say that $\Fa$ is \emph{relatively closed} and otherwise deem it \emph{relatively open}.
    When the arc is relatively open then the above ordering is unique, in which case we call the elements $a_1$ and $a_{|\Fa|}$
    the \emph{tail} and \emph{head} of the arc $\Fa$, respectively, and denote them by $\tail(\Fa)$ and $\head(\Fa)$.
    We say that an arc $\Fa$ is \emph{$U$-maximal} if $\Fa$ belongs to $U$ and there are no other arcs that belong to $U$ and contain $\Fa$.
    Let $\CA_U$ be the set of $U$-maximal arcs and let $\CO_U$ be the set of relatively open $U$-maximal arcs. One should note that $\CA_U$ and $\CO_U$ are determined by
    $\Gamma_\pi \cap U$ and that $\sum_{\Fa\in\CA_U} |\Fa| = n$.
    The last equality follows from the fact that every element of $[n]$ belongs to exactly one of the arcs of $\CA_U$ (possibly to an arc containing only this element).

    We proceed to discuss the way that the cycles of $\pi$ are
    formed from the arcs in $\CA_U$ and the portion of the graph $\Gamma_\pi \smallsetminus
    U$. Each point $(s,t)\in\Gamma_\pi\smallsetminus U$
    satisfies $s = \head(\Fa)$ and $t = \tail(\Fb)$ for some $\Fa,
    \Fb\in\CO_U$. Conversely, for each $\Fa\in\CO_U$ there exist points $(s,t), (s',t')\in \Gamma_\pi\smallsetminus U$ satisfying $s = \head(\Fa)$ and $t'=\tail(\Fa)$.
    Thus we may define a permutation
     $\tau:\CO_U\to\CO_U$ by setting
    \begin{equation*}
      \tau(\Fa) = \Fb\quad\text{if and only if}\quad (\head(\Fa),
      \tail(\Fb))\in \Gamma_\pi.
    \end{equation*}
    It is straightforward to check that each cycle of $\pi$ is then either a cycle in $\CA_U\smallsetminus\CO_U$, or formed
    by merging the open arcs in $\CO_U$ which are in the same orbit
    of $\tau$. In particular,
    \begin{equation*}
      \CL(\pi,\tfrac{1}{n}\1) = \Big\{\tfrac{1}{n}|\Fa|\,\big|\,
      \Fa\in\CA_U\smallsetminus\CO_U\Big\}\cup\Big\{\tfrac{1}{n}\sum_{\Fa\in\CC}
      |\Fa|\,\big|\, \text{$\CC$ is a cycle of $\tau$}\Big\},
    \end{equation*}
    where the equality and union are in the sense of multisets. Write $|\cdot|$ for the length map on arcs, $\Fa\mapsto|\Fa|$. Recalling \eqref{eq:multiset_metrics}, we conclude that
    \begin{equation}\label{eq:pi_tau_relation}
      d \left(\CL(\pi,\tfrac{1}{n}\1), \CL(\tau, \tfrac{1}{n}|\cdot|)\right) \leq \sqrt{\sum_{\Fa\in\CA_U\smallsetminus\CO_U} \frac{|\Fa|^2}{n^2}}\leq \sum_{\Fa\in\CA_U\smallsetminus\CO_U} \frac{|\Fa|}{n} = 1 - \sum_{\Fa\in \CO_U} \frac{|\Fa|}{n}.
    \end{equation}
    The Poisson-Dirichlet law is a consequence of this inequality and the following lemma, which states the properties of the Mallows model that we require.
    \begin{lemma} \label{lem:pdl_arcs} Let $\pi\sim\mu_{n,q}$ and suppose $n\to\infty$ with \eqref{eq:delocalized_regime} and
    \eqref{eq:pdl_main_eq1} holding. Then:
\begin{flalign}\label{eq:pdl_arcs}
       \text{(i)} && \tfrac{1}{n}\max_{\Fa\in \CO_U} |\Fa| \to 0 \quad\text{and}
        \quad \tfrac{1}{n}\sum_{\Fa\in \CO_U} |\Fa| \to
        1\quad\text{in probability}.&&
        \end{flalign}
      (ii) There exists a coupling of $\tau$ and a permutation $\sigma\colon \CO_U\to \CO_U$ such
        that
        \begin{equation*}
          \P[\sigma\neq \tau]\to 0\quad\text{as $n\to\infty$}
        \end{equation*}
        and, conditioned on $\Gamma_\pi\cap U$, $\sigma$ has the uniform distribution on permutations of $\CO_U$.
    \end{lemma}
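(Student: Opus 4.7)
The plan is to prove part (ii) first and then handle the two convergences in part (i). For part (ii), the key observation is that $U^{\complement}=\{n-m<x\le n,\,1\le y\le n-m\}$ is a discrete rectangle, so by Lemma~\ref{lem:ggm_subgraph}, conditionally on $\Gamma_\pi\cap U$, the relative order of $\Gamma_\pi\cap U^{\complement}$ has the Mallows distribution $\mu_{k,q}$ with $k=|\Gamma_\pi\cap U^{\complement}|=\kappa_{n-m}$ (using \eqref{eq:kappa_t_symmetric_def}). The $x$-coordinates of $\Gamma_\pi\cap U^{\complement}$ are precisely the heads of the arcs in $\CO_U$, while the $y$-coordinates are precisely the tails; after indexing $\CO_U$ by $[k]$ via these two orderings, the relative order coincides with $\tau$ up to a deterministic conjugation. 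It then suffices to bound
\[
\|\mu_{k,q}-\mathrm{Uniform}_{\S_k}\|_{TV}\lesssim (1-q)\,k^2,
\]
which I would obtain by expanding the density ratio $k!\,q^{\inv(\cdot)}/Z_{k,q}$ via $Z_{k,q}/k!=1+O((1-q)k^2)$ and $q^{\inv(\cdot)}=1+O((1-q)k^2)$. Since $k\le m$ deterministically and $(1-q)m^2\to 0$ (which follows from \eqref{eq:delocalized_regime} combined with the prescribed choices of $m$), the standard TV coupling produces the required $\sigma$.

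For the sum statement in (i), I rewrite $\sum_{\Fa\in\CO_U}|\Fa|=n-\sum_{\Fa\in\CA_U\setminus\CO_U}|\Fa|$. A relatively closed arc in $\CA_U$ is a cycle of $\pi$ all of whose edges lie in $U$, and so is entirely contained either in $(n-m,n]$ (total contribution at most $m$) or in $[1,n-m]$. For the latter, Fubini yields $\E\sum_{s\le n-m}\1_{\CC_s\subseteq[1,n-m]}=\sum_{s\le n-m}\P[\max(\CC_s)\le n-m]$, and each summand decomposes as $\P[s=\max(\CC_s)]+\sum_{t=s}^{n-m-1}\P[\arc_s^t\text{ open},\,\arc_s^{t+1}\text{ closed}]$, which I bound respectively via Proposition~\ref{upper_bound_arc_s} and \eqref{upper_bound_arc_t}. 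Applying $(1-q)/(1-q^k)\lesssim(1-q)+1/k$ from \eqref{eq:q_k_approx} and summing gives $\sum_s\P[\max(\CC_s)\le n-m]\lesssim \log n+n/m+(1-q)^2 n^2$, which is $o(n)$ by \eqref{eq:delocalized_regime} and \eqref{eq:pdl_main_eq1}. Markov's inequality then yields $\tfrac{1}{n}\sum_{\Fa\in\CO_U}|\Fa|\to 1$ in probability.

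The max-arc statement is the main obstacle. I would decompose each $\Fa\in\CO_U$ into a low phase $\Fc\in\CO_{n-m}$ (contained in $[1,n-m]$) and a high phase of length at most $m$ (contained in $(n-m,n]$), reducing the task to proving $\max_{\Fc\in\CO_{n-m}}|\Fc|=o(n)$ in probability. Writing $|\Fc|=\min\{k\ge 1:\pi^k(\tail(\Fc))>n-m\}$ and applying a union bound,
\[
\P\bigl[\max_{\Fc\in\CO_{n-m}}|\Fc|>L\bigr]\le\sum_{s\le n-m}\P\bigl[\pi(s),\pi^2(s),\ldots,\pi^L(s)\le n-m\bigr],
\]
I would aim to show each probability is $\lesssim(1-cm/n)^{L}$, so that $L=(n/m)\,\omega(n)$ with $\omega(n)\to\infty$ slowly gives the claim. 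The difficulty is that Mallows iterates are correlated and, crucially, $\mu_{n,q}$ is \emph{not} close to Uniform in total variation in the PDL regime. I expect the needed step bound to follow by induction: at each step, apply \eqref{eq:sampling_formula} or Lemma~\ref{lem:ggm_subgraph} to a suitable rectangle to show that, conditionally on the past, the next iterate has a constant multiple of $m/n$ probability of exceeding $n-m$, using that the displacement spread in \eqref{eq:displacements} is much larger than $n$ in our regime. A possible alternative is a moment bound on $\sum_{\Fc}|\Fc|^p$ exploiting the concentration of $\kappa_{n-m}$ near its maximal value $m$, which follows from the reversal symmetry \eqref{rev_symmetry} together with Theorem~\ref{mallows_chain_bounds}.
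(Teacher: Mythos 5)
Your part~(ii) is essentially the paper's argument: identify $\Gamma_\pi\cap U^\complement$ as the graph restricted to a rectangle, invoke Lemma~\ref{lem:ggm_subgraph} to get that its relative order is $\mu_{k,q}$ with $k=|\CO_U|\le m$, and couple with the uniform distribution in total variation; the paper bounds the TV distance by $1-q^{k^2}\le(1-q)k^2$, so your bound is equivalent and \eqref{eq:pdl_main_eq1} finishes it. Your argument for the second limit in~(i) is also correct, and is a legitimate reorganization of the paper's Proposition~\ref{prop:pdl_prop_diag_arcs}: you sum closure probabilities $\P[s=\max(\CC_s)]+\sum_t\P[\arc_s^t\text{ open},\,\arc_s^{t+1}\text{ closed}]$ directly via Proposition~\ref{upper_bound_arc_s} and \eqref{upper_bound_arc_t}, where the paper tracks the increments of $M_t=|\{i\le t:\arc_i^t\text{ closed}\}|$; the resulting bound $(1-q)^2n^2+n/m+\log n=o(n)$ is the same.

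The max statement is where your proposal has a genuine gap, and the specific route you sketch does not work. The per-step claim that, conditionally on the past, $\pi^{j+1}(s)>n-m$ with probability $\gtrsim m/n$ is false: conditionally on $\pi^j(s)=a$, the probability that $\pi(a)>n-m$ is at most the probability of a displacement of size $n-m-a$, which by the exponential tails behind \eqref{eq:displacements} is at most $e^{-c(n-m-a)(1-q)}$. For $a\le n/2$ this is $e^{-cn(1-q)/2}$, and in the part of the PDL regime where $n(1-q)\to\infty$ (e.g.\ $1-q=n^{-3/5}$) this is superpolynomially small, not $\gtrsim m/n$. More structurally, the orbit cannot escape to $(n-m,n]$ in few steps at all, since after $j$ steps it is confined within distance roughly $j/(1-q)$ of $s$, so no uniform geometric decay per step is available. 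The paper avoids this entirely with Claim~\ref{cl:pdl_arcs_claim1}, a soft deterministic-partition argument showing $\bigl(\tfrac{1}{|I|}\E[\max_i\ell_i]\bigr)^3\lesssim\tfrac{1}{|I|}\max_i\E[\ell_i]$ for any random partition of $I$; combined with the \emph{per-arc} first-moment bound $\E|\arc_s^{n-m}|\lesssim 1+(1-q)^2n^2+n/m=o(n)$ of Proposition~\ref{prop:pdl_prop_diag_arcs} (which you also do not establish --- your computation bounds the total mass in closed arcs, not the length of the arc through a fixed point), this gives $\tfrac1n\E[\max_{\Fa\in\CA_{n-m}}|\Fa|]\to 0$ with no tail estimates needed. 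You would need to supply both of these ingredients, or a genuine substitute, to close part~(i).
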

    To obtain the Poisson-Dirichlet limit law, let $\sigma$ be the
    random permutation given by part (ii) of Lemma~\ref{lem:pdl_arcs}. Observe
    that
    \begin{multline*}
      d \left(\CL(\pi,\tfrac{1}{n}\1), \CL(\sigma, \tfrac{1}{|\CO_U|}\1)\right)\le\\
      \underbrace{d \left(\CL(\pi,\tfrac{1}{n}\1), \CL(\tau, \tfrac{1}{n}|\cdot|)\right)}_{=:\textrm{I}}+
      \underbrace{d \left(\CL(\tau, \tfrac{1}{n}|\cdot|), \CL(\sigma, \tfrac{1}{n}|\cdot|)\right)}_{=:\textrm{II}}+
      \underbrace{d \left(\CL(\sigma, \tfrac{1}{n}|\cdot|), \CL(\sigma,
      \tfrac{1}{|\CO_U|}\1)\right)}_{=:\textrm{III}}.
    \end{multline*}
    We estimate the expectation of each of the last three summands separately. By
    \eqref{eq:pi_tau_relation} and part (i) of Lemma~\ref{lem:pdl_arcs},
    \begin{equation*}
      \E[\textrm{I}] \le \E\left[1 - \sum_{\Fa\in \CO_U} \frac{|\Fa|}{n}\right] \to
      0\quad\text{as $n\to\infty$}.
    \end{equation*}
    Note that as $d \left(\CL(\alpha,w), \CL(\beta, w)\right)\le 2\sum_i
    |w_i|$ for any two permutations $\alpha,\beta$ on a finite set $S$ and weight tuple
    $w:S\to\R$, we have $\textrm{II}\le 2\cdot \1_{\sigma\neq
    \tau}$. Thus, by part (ii) of Lemma~\ref{lem:pdl_arcs},
    \begin{equation*}
      \E[\textrm{II}]\le 2\cdot \P[\sigma\neq \tau]\to 0\quad\text{as $n\to\infty$}.
    \end{equation*}
    Lastly, $\textrm{III} = 0$ when $\CO_U = \emptyset$ and by Proposition~\ref{prop:pdl_convergence_prop1},
    \begin{equation*}
      \E[(\textrm{III})^2\,|\,\Gamma_\pi\cap U] \le \tfrac{1}{2}\Big(\big(1-\tfrac{1}{n}\sum_{\Fa\in\CO_U} |\Fa|\big)^2
      +\sum_{\Fa\in\CO_U}
        \big(\tfrac{1}{|\CO_U|}-\tfrac{|\Fa|}{n}\big)^2
        \Big)\quad \text{on $\{\CO_U \neq \emptyset\}$}.
    \end{equation*}
    Thus, applying part (i) of Lemma~\ref{lem:pdl_arcs} and observing that it
    implies, in particular, that
    $|\CO_U|\to\infty$ in probability as $n\to\infty$, we obtain
    \begin{equation*}
      \E[\textrm{III}]\le \sqrt{\E[(\textrm{III})^2]} \to 0\quad\text{as $n\to\infty$}.
    \end{equation*}
    We conclude that
    \begin{equation*}
      \E\Big[d \left(\CL(\pi,\tfrac{1}{n}\1), \CL(\sigma,
      \tfrac{1}{|\CO_U|}\1)\right)\Big]\to 0\quad\text{as
      $n\to\infty$}.
    \end{equation*}
    The limiting distribution of $\CL(\sigma,
      \tfrac{1}{|\CO_U|}\1)$ is the $\CPD$ distribution as,
      conditioned on $\Gamma_\pi\cap U$, $\sigma$ is a uniform
      permutation on $\CO_U$ by part (ii) of  Lemma~\ref{lem:pdl_arcs}, using \eqref{eq:pdl_convergence_eq1} and relying again on the fact that $|\CO_U|\to\infty$ in probability as
      $n\to\infty$. Thus $\CL(\pi,\tfrac{1}{n}\1)$ converges also to the $\CPD$ distribution, as we wanted to prove.

   \smallskip\noindent {\bf Proof of part~(ii) of Lemma~\ref{lem:pdl_arcs}:} This is a corollary of Lemma~\ref{lem:ggm_subgraph}.
    On the event $\{\CO_U=\emptyset\}$ we simply set $\sigma = \tau$.
    On the complementary event $\{\CO_U\neq\emptyset\}$ we do as follows.
    Set $k:=|\CO_U|$. Let $\tilde{\tau}$ be the relative order, see \eqref{eq:relative_order_permutation}, of $\Gamma_\pi \smallsetminus U$.
    By our construction,
   \begin{equation*}
     \tau = \alpha\circ\tilde{\tau}\circ\beta
   \end{equation*}
   for two bijections, $\alpha:[k]\to\CO_U$ and $\beta:\CO_U\to[k]$, which are determined by $\Gamma_\pi\cap
   U$. Explicitly, this follows by viewing $\tau\colon \CO_U \to \CO_U$ as the composition of $5$ maps:
    \[
        \CO_U\stackrel{\head}{\to} \head(\CO_U) \stackrel{\textup{monotone}}{\to} [k] \stackrel{\tilde \tau}{\to} [k]
        \stackrel{\textup{monotone}}{\to} \tail(\CO_U) \stackrel{\tail^{-1}}{\to} \CO_U,
    \]
 where $A \stackrel{\textup{monotone}}{\to} B$ stands for the unique monotone increasing bijection from $A$ to $B$,
    provided that $A$ and $B$ are subsets of $\N$ of the same size.

    It thus suffices to couple $\tilde{\tau}$ with a permutation $\tilde{\sigma}:[k]\to[k]$ in a way that
   \begin{equation*}
     \P[\tilde{\tau}\neq\tilde{\sigma}\mid k\geq 1]\to 0\quad\text{as $n\to\infty$}
   \end{equation*}
   and, conditioned on $\Gamma_\pi\cap U$, $\tilde{\sigma}$ has the uniform distribution on $\S_k$, as we may then take $\sigma := \alpha\circ\tilde{\sigma}\circ\beta$.

   By Lemma~\ref{lem:ggm_subgraph}, conditioned on $\Gamma_\pi\cap U$, we have that $\tilde{\tau}\sim \mu_{k,q}$. Hence the following claim suffices to finish the proof, using our assumption \eqref{eq:pdl_main_eq1} and the fact that $k\leq m$.
    \begin{claim}
        Let $k\in \N$, $0<q\le 1$ and $\rho\sim\mu_{k,q}$. Then $\rho$ may be coupled with a uniform random permutation $\lambda$ in $\S_k$
        such that
        \[
            \P[\rho\neq\lambda]\leq 1 - q^{k^2}.
        \]
    \end{claim}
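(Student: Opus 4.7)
The plan is to use the maximal coupling between $\mu_{k,q}$ and the uniform distribution $\textup{Unif}(\S_k)$. Recall that for any two probability distributions $P$ and $Q$ on a finite set $\Omega$, there exists a coupling $(X,Y)$ with $X\sim P$, $Y\sim Q$ and
\[
  \P[X = Y] = \sum_{\omega\in\Omega}\min\{P(\omega),Q(\omega)\}.
\]
Thus it suffices to prove the pointwise lower bound
\[
  \min\{\mu_{k,q}[\pi],\tfrac{1}{k!}\} \;\geq\; \frac{q^{k^2}}{k!}\quad\text{for every } \pi\in\S_k,
\]
since summing over all $k!$ permutations then yields $\P[\rho=\lambda]\ge q^{k^2}$.

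The bound $\frac{1}{k!}\ge \frac{q^{k^2}}{k!}$ is immediate from $q\le 1$. For the Mallows side, observe that $\inv(\pi)\le \binom{k}{2}\le k^2$, so $q^{\inv(\pi)}\ge q^{k^2}$. On the other hand, the normalizer satisfies
\[
  Z_{k,q} = \prod_{i=1}^{k}(1+q+\cdots+q^{i-1}) \;\le\; \prod_{i=1}^{k} i \;=\; k!,
\]
since each factor is bounded by $i$ (because $q\le 1$). Combining these two estimates gives $\mu_{k,q}[\pi] = q^{\inv(\pi)}/Z_{k,q} \ge q^{k^2}/k!$, as required.

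There is no real obstacle here; the only subtlety is the choice of exponent $k^2$ (rather than the sharper $\binom{k}{2}$), which is harmless since $q\le 1$ makes the inequality $q^{\binom{k}{2}}\ge q^{k^2}$ hold. In the application of the claim one has $k\le m$ with $q^{m^2}\to 1$ by \eqref{eq:pdl_main_eq1}, so the looser form is enough.
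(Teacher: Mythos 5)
Your proof is correct and follows essentially the same approach as the paper: both use the maximal coupling and establish the pointwise bound $\mu_{k,q}[\pi]\ge q^{k^2}/k!$ via $\inv(\pi)\le k^2$ and $Z_{k,q}\le k!$.
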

    \begin{proof}
    We recall (see, e.g., \cite[Proposition 4.7]{mixing_times}) that the total variation distance of two probability distributions $\mu$ and $\nu$ on a finite set $S$ is given by
    \[
        \textup{TV}(\mu,\nu) := \frac{1}{2}\sum_{s\in S} \big|\mu[s]-\nu[s]\big| =  1 - \sum_{s\in S}\min\{\mu[s], \nu[s]\},
    \]
    and that there exists a coupling of the two distributions, i.e., random variables $X,Y$ with $X$ distributed $\mu$ and $Y$ distributed $\nu$, so that $\P[X\neq Y] = \textup{TV}(\mu,\nu)$ (and, moreover, there is no coupling achieving a smaller value for $\P[X\neq Y]$). Thus it suffices to show that the total variation distance of $\mu_{k,q}$ and the uniform distribution on $\S_k$ is at most $1 - q^{k^2}$.

        Let $Z_{k,q}$ be as in the definition \eqref{eq:mu_n_q_def} of the Mallows permutation and note that $Z_{k,q} \leq k!$.
        Since $\inv(\sigma) \leq k^2$ for all $\sigma\in\S_k$, we obtain from \eqref{eq:mu_n_q_def} that $\mu_{k,q}[\sigma] \geq \frac{q^{k^2}}{k!}$ for all $\sigma\in\S_k$. Thus the required total variation distance is at most $1 -  \sum_{\sigma\in\S_k}\min\{\frac{q^{k^2}}{k!},\frac{1}{k!}\} =1 - q^{k^2}$.

    \end{proof}

   {\noindent \bf Proof of part~(i) of Lemma~\ref{lem:pdl_arcs}:} The claim is derived from the following proposition regarding diagonal arcs.
    \begin{proposition}\label{prop:pdl_prop_diag_arcs}
        For $1\leq s \leq r \leq n$ one has
        \begin{align}
    \E|\{1\le i\leq r \mid \text{$\arc_i^r$ is closed}\}|&\lesssim 1 +(1-q)^2\cdot n^2 + \tfrac{1}{n-r+1}\cdot n, \label{eq:pdl_prop_arc_eq2} \\
            \E|\arc_s^r|&\lesssim 1 +(1-q)^2\cdot n^2 + \tfrac{1}{n-r+1}\cdot n. \label{eq:pdl_prop_arc_eq1}
        \end{align}
    \end{proposition}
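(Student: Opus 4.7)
The plan is to prove the two bounds separately, both via the diagonal exposure machinery. Both bounds are vacuous when $q\le 1/2$, since then $(1-q)^2 n^2\gtrsim n$ already exceeds the quantities in question (which are at most $n$), so I would reduce to the case $q>1/2$ throughout.

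For \eqref{eq:pdl_prop_arc_eq2}, I would begin with the double-counting identity
\[
\E\big|\{1\le i\le r:\arc_i^r\text{ closed}\}\big|=\sum_{m=1}^r \E\big[|\CC_m|\cdot\1\{m=\max(\CC_m)\}\big],
\]
which holds since each cycle $\CC\subseteq[r]$ contributes $|\CC|$ to the LHS (once per element) and once (at $m=\max\CC$) to the RHS. On the event $E_m=\{m=\max(\CC_m)\}$, the cycle of $m$ equals the arc $\arc_m^m$ just formed at time $m$ in the diagonal exposure---either a new fixed-point singleton (handled by Lemma~\ref{fix_point_equation}) or an open arc $\Fa\in\CO_{m-1}$ just closed, giving $|\CC_m|=|\Fa|+1$ (handled by Lemma~\ref{lemma_U_arc_eq}). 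This yields
\[
\E_{m-1}[|\CC_m|\1_{E_m}] \le \frac{1-q}{1-q^{n-m+1}} + \sum_{\Fa\in\CO_{m-1}}(|\Fa|+1)\,q^{\indh_\Fa+\indt_\Fa}\!\left(\frac{1-q}{1-q^{n-m+1}}\right)^{\!2}.
\]
I would bound $\sum_\Fa|\Fa|\le m-1$ trivially and $\sum_\Fa q^{\indh_\Fa+\indt_\Fa}\le 1/(1-q^2)\lesssim\xi$ via the rearrangement inequality (ranking arcs by head and by tail, so the maximal sum is $\sum_k q^{2k-2}$). Summing over $m$ and using $(1-q)/(1-q^k)\asymp 1-q+1/k$, the dominant contributions are $(1-q)^2 r^2\lesssim(1-q)^2n^2$ and $\sum_m 1/(n-m+1)^2\lesssim n/(n-r+1)$, together with a cross-term $\sum_m m(1-q)/(n-m+1)$ that I would absorb into the same two summands as described below.

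For \eqref{eq:pdl_prop_arc_eq1}, I would split according to whether $\xi\le\sqrt n$. When $\xi\le\sqrt n$, the bound follows immediately from $|\arc_s^r|\le|\CC_s|$ and Theorem~\ref{cycle_length}, since in this regime $\xi^2\le(n/\xi)^2$. When $\xi>\sqrt n$, I would use the telescoping identity $|\arc_s^r|=1+\sum_{t=s}^{r-1}(|\arc_s^{t+1}|-|\arc_s^t|)$ and bound each conditional expected increment. The increment is $0$ if $\arc_s^t$ is closed, and otherwise equals $1$ (if $t+1$ extends or closes $\arc_s^t$) or $1+|\Fb|$ (if $t+1$ merges $\arc_s^t$ with some other $\Fb\in\CO_t$). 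Lemma~\ref{lemma_U_arc_eq}, together with the same bounds $\sum_\Fb|\Fb|\le t$ and $\sum_\Fb q^{\indt_\Fb}\lesssim\xi$ as in the first part, yields a per-step bound of the same form as in \eqref{eq:pdl_prop_arc_eq2}, which sums to the required estimate.

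The main obstacle will be extracting exactly the terms $(1-q)^2 n^2$ and $n/(n-r+1)$ from the cross-term sum $\sum_m m(1-q)/(n-m+1)\lesssim(n/\xi)\log(n/(n-r+1))$; a naive bound would produce a spurious $\log n$ factor not absorbed by the RHS of the proposition. I would handle this using $\log x\le 2\sqrt x$ (valid for $x\ge 1$) together with AM-GM: writing $(n/\xi)\log(n/(n-r+1))\le 2(n/\xi)\sqrt{n/(n-r+1)}\le(n/\xi)^2+n/(n-r+1)$ splits the cross term exactly into the two dominant summands of the target bound.
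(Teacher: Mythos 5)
Your overall strategy essentially coincides with the paper's: the identity $\E|\{i\le r:\arc_i^r\text{ closed}\}|=\sum_m\E\big[|\CC_m|\,\1\{m=\max(\CC_m)\}\big]$ is exactly the telescoping of the paper's process $M_t=|\{i\le t:\arc_i^t\text{ closed}\}|$, and your increment analysis of $|\arc_s^t|$ is the paper's analysis of $N_t=|\arc_s^t|$. The genuinely different ingredients --- the reduction to $q>\tfrac12$, the shortcut via Theorem~\ref{cycle_length} when $\xi\le\sqrt n$, and the explicit $\log x\le 2\sqrt x$ plus AM--GM treatment of the cross term (which the paper leaves implicit in ``follows by using \eqref{eq:q_k_approx}'') --- are all valid.

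There is, however, a genuine gap in how you handle the ``$+1$'' contributions. You bound $\sum_{\Fa\in\CO_{m-1}}q^{\indh_\Fa+\indt_\Fa}\le\frac{1}{1-q^2}$ and multiply by $\bigl(\frac{1-q}{1-q^{n-m+1}}\bigr)^2$, giving a per-step term of order $\frac{1-q}{(1-q^{n-m+1})^2}\approx(1-q)+\frac{1}{(n-m+1)^2(1-q)}$; summed over $m\le r$ this contributes $\approx\frac{1}{(1-q)(n-r+1)}$, which is \emph{not} dominated by $1+(1-q)^2n^2+\frac{n}{n-r+1}$. For instance, with $1-q=n^{-2}$ and $n-r+1=\sqrt n$ your intermediate bound is of order $n^{3/2}$ while the target is of order $\sqrt n$ --- and this is precisely the regime ($(1-q)^2n\to0$, $n-r\to\infty$ slowly) in which the proposition is applied to prove the Poisson--Dirichlet law, so the loss cannot be dismissed. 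The fix is to retain the dependence on the number of open arcs: since $\kappa_{m-1}\le n-m+1$, one has $\sum_\Fa q^{\indh_\Fa+\indt_\Fa}\le\frac{1-q^{2\kappa_{m-1}}}{1-q^2}\le\frac{2(1-q^{n-m+1})}{1-q^2}$, so the product is at most $\frac{2(1-q)}{1-q^{n-m+1}}$, which sums correctly. Equivalently (the paper's route), the closing events for distinct $\Fa$ are disjoint sub-events of $\{m=\max(\CC_m)\}$, whose probability is at most $\frac{1-q}{1-q^{n-m+1}}$ by Proposition~\ref{upper_bound_arc_s}; in the second part the merging events are disjoint sub-events of $\{\pi_{t+1}=\tail(\arc_s^t)\text{ or }\pi_{t+1}^{-1}=\head(\arc_s^t)\}$, handled by Lemma~\ref{lemma_U_arc_eq}. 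A second, minor, slip: $|\arc_s^s|$ need not equal $1$ (the element $s$ may merge with up to two arcs of $\CO_{s-1}$ upon arrival), so the telescoping for \eqref{eq:pdl_prop_arc_eq1} must start from the bound $\E|\arc_s^s|\lesssim 1+n\cdot\frac{1-q}{1-q^{n-s+1}}$ rather than from $1$.
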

    \noindent  Since each open arc of $\CA_{n-m}$ extends to an arc of $\CO_U$,
    inequality \eqref{eq:pdl_prop_arc_eq2}, applied with $r=n-m$, implies the second limit in \eqref{eq:pdl_arcs} by our assumptions that $m$ tends to infinity with $n$ and that \eqref{eq:delocalized_regime} and
    \eqref{eq:pdl_main_eq1} hold.

    To derive the first limit in \eqref{eq:pdl_arcs} we use the
    following general claim.
    \begin{claim}\label{cl:pdl_arcs_claim1}
        Let $I$ be a non-empty finite set. Let $A_1,\ldots,A_k$, where $k\in \N$ is random, be pairwise disjoint random subsets of $I$ with union $I$.
        For $i\in I$ let $\ell_i$ be the size of the $A_j$ to which $i$ belongs. Then
        \[
             \left(\tfrac{1}{|I|}\E [\max_{i\in I}\ell_i]\right)^3 \lesssim \tfrac{1}{|I|} \max_{i\in I} \E[\ell_i].
        \]
    \end{claim}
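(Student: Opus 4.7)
The plan is to exploit the elementary identity $\sum_{i\in I}\ell_i = \sum_{j}|A_j|^2$, which ties the average value of $\ell_i$ to the second moment of $M := \max_{i\in I}\ell_i$. This identity holds because each of the $|A_j|$ elements of $A_j$ contributes $|A_j|$ to the left-hand sum.

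From this identity I would immediately obtain
\[
    \sum_{i\in I}\ell_i \;=\; \sum_{j}|A_j|^2 \;\geq\; \max_j |A_j|^2 \;=\; M^2.
\]
Taking expectations and using the trivial bound $\sum_{i\in I}\E[\ell_i]\leq |I|\cdot\max_{i\in I}\E[\ell_i]$ yields
\[
    \E[M^2] \;\leq\; |I|\cdot \max_{i\in I}\E[\ell_i].
\]
Jensen's inequality then gives $(\E M)^2 \leq \E[M^2]$, so
\[
    (\E M)^2 \;\leq\; |I|\cdot \max_{i\in I}\E[\ell_i].
\]

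To promote the square to a cube, I would use the deterministic bound $M\leq |I|$ (since each $A_j\subseteq I$), which gives $\E M \leq |I|$. Multiplying the previous inequality by $\E M/|I|\leq 1$ yields
\[
    (\E M)^3 \;\leq\; |I|^2\cdot \max_{i\in I}\E[\ell_i],
\]
and dividing through by $|I|^3$ delivers the claim with absolute constant $1$. There is no real obstacle; the entire content sits in the identity $\sum_i \ell_i = \sum_j |A_j|^2$, with Jensen and $M\leq |I|$ doing the remaining routine work.
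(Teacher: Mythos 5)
Your proof is correct, and it achieves the bound with absolute constant $1$, which is slightly sharper than the $\lesssim$ required. Both your argument and the paper's proof rest on the same core combinatorial observation: that $\sum_{i\in I}\ell_i = \sum_j |A_j|^2 \geq M^2$ where $M = \max_i \ell_i$ (the paper phrases this as ``on the event $\{L\ge \tfrac12\alpha|I|\}$ there are at least $\tfrac12\alpha|I|$ indices $i$ with $\ell_i\ge\tfrac12\alpha|I|$,'' with $L=M$ and $\alpha=\E[M]/|I|$). The probabilistic machinery, however, differs. The paper first applies Markov's inequality to $|I|-L$ to obtain $\P[L\ge\tfrac12\alpha|I|]\gtrsim\alpha$, then takes expectations only over the event $\{L\ge\tfrac12\alpha|I|\}$; the extra power of $\alpha$ needed to pass from a square to a cube comes from this probability factor. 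You instead take expectations globally, apply Jensen to get $(\E M)^2\le\E[M^2]\le|I|\max_i\E[\ell_i]$, and get the third power from the deterministic bound $\E M\le|I|$. Your route avoids the tail-bound manipulation and is both shorter and quantitatively cleaner; the paper's route is perhaps slightly more in the style of the surrounding probabilistic estimates but buys nothing here.
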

    Now, inequality \eqref{eq:pdl_prop_arc_eq1} yields that $\tfrac{1}{n}\E|\arc_s^{n-m}|\to 0$ uniformly over all $s\in[n-m]$ by our assumptions that $m$ tends to infinity with $n$ and that \eqref{eq:delocalized_regime} and
    \eqref{eq:pdl_main_eq1} hold.
    Claim~\ref{cl:pdl_arcs_claim1}, applied with $I = [n-m]$ and $A_1,\ldots,A_k$ being the diagonal arcs of
    $\CA_{n-m}$, allows one to deduce that in fact
    \begin{equation}\label{eq:pdl_max_diag_conv}
        \tfrac{1}{n}\E\Big[\max_{\Fa\in \CA_{n-m}}|\Fa|\Big] = \tfrac{1}{n}\E\Big[\max_{s\in[n-m]}|\arc_s^{n-m}|\Big]\to 0.
    \end{equation}
    Since restricting an arc of $\CO_U$ to $[n-m]$ yields an open arc of
    $\CA_{n-m}$ which is shorter by at most $m$ elements,
    the first limit in \eqref{eq:pdl_arcs} follows from \eqref{eq:pdl_max_diag_conv} using our assumption that $\frac{m}{n}\to 0$.
    \begin{proof}[Proof of Claim~\ref{cl:pdl_arcs_claim1}]
        Set $L = \max_{i\in I}\ell_i$ and $\alpha:=\frac{1}{|I|}\E[L]$. Due to $0\leq L\leq |I|$, Markov's inequality, for $|I|-L$,
        implies that
        \begin{equation}\label{eq:pdl_arcs_claim1_eq1}
            \P[L\geq \tfrac{1}{2} \alpha |I| ] \gtrsim \alpha.
        \end{equation}
        The sum of the $\ell_i$ satisfies
        \begin{equation}\label{eq:pdl_arcs_claim1_eq2}
            \sum_{i\in I} \ell_i \geq \1\{L \geq \tfrac{1}{2}\alpha |I|\}\cdot\sum_{i\in I} \ell_i\geq
            \frac{\alpha^2 |I|^2}{4}\cdot \1\{L \geq \tfrac{1}{2} \alpha |I|\}.
        \end{equation}
        The second inequality is due to the fact that on the event $\{L \geq \tfrac{1}{2} \alpha |I|\}$
        there are at least $\tfrac{1}{2} \alpha |I|$ values of $i\in I$ for which $\ell_i\geq \frac{1}{2} \alpha |I|$.
        By taking expectation in \eqref{eq:pdl_arcs_claim1_eq2} and substituting \eqref{eq:pdl_arcs_claim1_eq1} we obtain
        \[
            |I|\cdot\max_{i\in I}\E[\ell_i]\geq  \sum_{i\in I}\E\left[\ell_i\right]\geq \frac{\alpha^2 |I|^2}{4}\P[L\geq \tfrac{1}{2}\alpha  |I|]\gtrsim \alpha^3 |I|^2.
            \qedhere
        \]
    \end{proof}

    \begin{proof}[Proof of Proposition~\ref{prop:pdl_prop_diag_arcs}] Fix $s,r$ satisfying $1\leq s \leq r \leq n$.
     Define
    \[
        M_t:=|\{1\le i\leq t \mid \text{$\arc_i^t$ is closed}\}|\quad \text{and for $t\geq s$ let} \quad N_t:=|\arc_s^t|.
    \]
    Our intention is to provide bounds for $\E[N_s]$, $\E[N_{t+1}-N_t]$ and $\E[M_{t+1}-M_t]$, and to derive the proposition from these bounds.
    Observe that
    \[
        N_{s}=1 + \sum_{\Fa\in\CO_{s-1}} |\Fa| \cdot
        \1\left\{ \begin{array}{l} \pi_s=\tail(\Fa) \text{ or } \\ \pi^{-1}_s=\head(\Fa)\end{array}\right\},
    \]
      as $\arc_s^s$ consists of $s$ and up to two arcs of $\CO_{s-1}$ that have merged with $\{s\}$ via either their head or their tail.
    In light of this equality, Lemma~\ref{lemma_U_arc_eq} and the fact that $\sum_{\Fa\in \CO_{s-1}} |\Fa| \leq n$ imply the following bound,
    \begin{equation}\label{eq:N_s_bound}
        \E[N_{s}]\lesssim 1+ n \cdot \frac{1-q}{1-q^{n-s+1}}.
    \end{equation}
    To bound $N_{t+1}-N_t$ and $M_{t+1}-M_t$ we define the events
    \[
      \CM_{t+1}(\Fa,\Fb):= \left\{\begin{array}{l} \pi_{t+1}=\tail(\Fa) \\  \pi^{-1}_{t+1}=\head(\Fb)\end{array}\right\} \bigcup
                                \left\{ \begin{array}{l} \pi_{t+1}=\tail(\Fb) \\  \pi^{-1}_{t+1}=\head(\Fa)\end{array}\right\},\quad \Fa,\Fb\in\CO_t,
    \]
        denoting the merging of $\Fa$ and $\Fb$ when the two arcs are distinct and the closure of $\Fa$ when they are equal.
    Note that $\P_t[\CM_{t+1}(\Fa,\Fb)]\leq (\tfrac{1-q}{1-q^{n-t}})^2$ for all $\Fa,\Fb\in\CO_t$, as Lemma~\ref{lemma_U_arc_eq} implies.

    Observe that for $t \geq s$ one has $N_{t+1} - N_t = 0$ if $\arc_s^t$ is closed and otherwise one has
    \[
        N_{t+1} - N_t = \1\left\{ \begin{array}{l} \pi_{t+1}=\tail(\arc_s^t) \text{ or}\\ \pi^{-1}_{t+1}=\head(\arc_s^t)\end{array}\right\} +
        \sum_{\Fa\in\CO_t\smallsetminus\{\arc_s^t\}} |\Fa| \cdot \1_{\CM_{t+1} (\Fa, \arc_s^t)}\quad \text{on $\{\arc_s^t\in \CO_t\}$},
    \]
    as $N_{t+1}- N_t>0$ only when $\{t+1\}$ has merged with $\arc_s^t$,
    in which case $N_{t+1}- N_t = |\Fa| + 1$ if $\{t+1\}$ has also merged
    with another arc $\Fa\in \CO_t\smallsetminus\{\arc_s^t\}$ and otherwise $N_{t+1}- N_t = 1$.

    For the difference $M_{t+1} - M_t$, observe that
    \begin{align*}
        M_{t+1}- M_t &=\1\left\{t+1=\pi_{t+1}\right\}+\sum_{\Fa\in\CO_t } (|\Fa|+1)\cdot \1_{\CM_{t+1}(\Fa,\Fa)}\\
            &=\1\left\{t+1=\max(\CC_{t+1})\right\}+\sum_{\Fa\in\CO_t }  |\Fa| \cdot \1_{\CM_{t+1}(\Fa,\Fa)} ,
    \end{align*}
    as $M_{t+1}- M_t>0$ only when $t+1$ closes a cycle, in which case $ M_{t+1}- M_t = |\Fa| + 1$
    when $t+1$ closes the arc $\Fa\in\CO_t$ and $ M_{t+1}- M_t = 1$
    when $t+1$ forms a fixed point.

    Note that $\sum_{\Fa\in\CO_{t}}|\Fa|\leq n$ and apply Lemma~\ref{lemma_U_arc_eq} and Proposition~\ref{upper_bound_arc_s}
    to the above formulas for $N_{t+1} - N_t$ and $M_{t+1} - M_t$ to obtain
    \begin{align}
        \E[M_{t+1} - M_t] &\lesssim \frac{1-q}{1-q^{n-t}} + n \cdot \frac{(1-q)^2}{(1-q^{n-t})^2} \quad \text{for $t\geq 0$,}\label{eq:N_t_bound}\\
        \E[N_{t+1} - N_t] &\lesssim \frac{1-q}{1-q^{n-t}} + n\cdot \frac{(1-q)^2}{(1-q^{n-t})^2} \quad \text{for $t\geq s$.}\label{eq:M_t_bound}
    \end{align}

    By using the bounds \eqref{eq:N_s_bound}, \eqref{eq:N_t_bound} and \eqref{eq:M_t_bound} one may show that both $\E[N_r]$ and $\E[M_r]$ are bounded, up to multiplication by a positive absolute constant, by
       \[
          1 + n\cdot \frac{1-q}{1-q^{n-r+1}} + \sum_{t=0}^{r-1} \frac{1-q}{1-q^{n-t}}+n\cdot\sum_{t=0}^{r-1} \frac{ (1-q)^2}{(1-q^{n-t})^2},
       \]
   The bounds \eqref{eq:pdl_prop_arc_eq2} and \eqref{eq:pdl_prop_arc_eq1}
   follow by using \eqref{eq:q_k_approx}.
    \end{proof}
    {\bf Limiting Distribution of $\mathbf{\frac{1}{n}|\CC_{s_n}|}$:} Our proof of the fact that $\frac{1}{n}|\CC_{s_n}|$ converges in distribution to $U[0,1]$, the uniform distribution on $[0,1]$, is very similar to our proof of the Poisson-Dirichlet limit law.
    Therefore, let us only elaborate on the main differences.

    The proof is based on the following simple fact:
    For $k\in \N$ let $\rho\in \S_k$ be a uniformly random permutation and let $i_k\in [k]$ be arbitrary.
    Then,
    \begin{equation}\label{eq:uniform_cycle_limit}
        \tfrac{1}{k}|\CC_{i_k} (\rho)| \to U[0,1]\quad\text{in distribution as $k\to\infty$}.
    \end{equation}

    We use the same notation as in the proof of the Poisson-Dirichlet limit law, e.g., $\sigma$ is the random permutation over $\CO_U$
    from part (ii) of Lemma~\ref{lem:pdl_arcs}. For $\Fa \in \CO_U$ we denote by $\CC_\Fa (\sigma)$ the orbit of
    $\sigma$ and similarly with $\tau$.

Let $\Fa$ be the arc of $\CA_U$ that contains $s_n$. We first claim
that
\begin{equation}\label{eq:Fa_open_U}
\P[\Fa\in\CO_U]\to 1\quad\text{as $n\to\infty$}.
\end{equation}
Indeed, this is a consequence of Corollary~\ref{diam_lower_corr},
used with the reversal symmetry \eqref{rev_symmetry} if $s_n > n-m$,
and our assumptions that $m$ tends to infinity with $n$ and that
\eqref{eq:delocalized_regime} and \eqref{eq:pdl_main_eq1} hold.

In addition, we recall that $|\CO_U|\to\infty$ in probability as
$n\to\infty$ as a consequence of \eqref{eq:pdl_arcs}. Now, the
limits \eqref{eq:uniform_cycle_limit} and \eqref{eq:Fa_open_U} and
the fact that, given $|\CO_U|$, the distribution of $\sigma$ is
uniform imply that
\[
  \tfrac{1}{|\CO_U|}|\CC_\Fa (\sigma)| \ \to \ U[0,1]\quad\text{in distribution as $n\to\infty$},
\]
where it is understood that in the case when $\Fa\notin \CO_U$ we
set $\tfrac{1}{|\CO_U|}|\CC_\Fa (\sigma)|:=0$. It thus suffices to
show that
\[
        \left|\tfrac{1}{n}|\CC_{s_n}(\pi)| - \tfrac{1}{|\CO_U|}|\CC_\Fa (\sigma)| \right| \to  0 \quad \text{in probability as $n\to \infty$.}
\]
To see this, we write, similarly as in the proof of the
Poisson-Dirichlet limit law, interpreting $\CC_\Fa (\tau)$ and
$\CC_\Fa (\sigma)$ as empty when $\Fa\notin\CO_U$,
\begin{multline*}
  \left|\tfrac{1}{n}|\CC_{s_n}(\pi)| - \tfrac{1}{|\CO_U|}|\CC_\Fa (\sigma)|
  \right| \le\\
  \Big|\tfrac{1}{n}|\CC_{s_n}(\pi)| - \sum_{\Fb\in \CC_\Fa (\tau)} \tfrac{|\Fb|}{n}\Big|
  + \Big|\sum_{\Fb\in \CC_\Fa (\tau)} \tfrac{|\Fb|}{n} - \sum_{\Fb\in \CC_\Fa (\sigma)}\tfrac{|\Fb|}{n}\Big|
  + \Big|\sum_{\Fb\in \CC_\Fa (\sigma)}
  \tfrac{|\Fb|}{n} -  \tfrac{1}{|\CO_U|}|\CC_\Fa (\sigma)|\Big|.
\end{multline*}
The first of the terms on the right-hand side is small, in
probability, due to \eqref{eq:Fa_open_U}. The second term is small
by part (ii) of Lemma~\ref{lem:pdl_arcs}. The last term can be
bounded in a similar manner as in the proof of
Proposition~\ref{prop:pdl_convergence_prop1} and shown to be small
by part (i) of Lemma~\ref{lem:pdl_arcs}.
\end{subsection}

\end{section}

\begin{section}{Discussion and Open Questions}\label{sec:discussion}
In this work we study the Mallows model for random permutations,
providing estimates for the typical length and diameter of cycles.
We observe that macroscopic cycles emerge in the parameter range
$\frac{1}{(1-q)^2}\gg n$. In this regime we prove further that the
joint distribution of the lengths of long cycles in the permutation
converges to the Poisson-Dirichlet distribution. In this section we
discuss several further questions on the Mallows model as well as
questions pertaining to other related models of random permutations.

\medskip
\textbf{The limiting distributions of the cycle length and cycle
diameter.} Let $\pi$ have the Mallows distribution with parameters
$n$ and $q$. Recall that $\CC_s$ stands for the cycle in $\pi$
containing the point $1\le s\le n$, so that $|\CC_s|$ and
$\max(\CC_s) - \min(\CC_s)$ are the length and diameter of $\CC_s$
respectively. What can be said about the limiting distributions of
these quantities when $q\to 1$ and $n\to \infty$? To avoid boundary
effects, we restrict to the case that there is some $\alpha\in
(0,1)$ for which $s = s_n$ satisfies $\frac{s}{n}\to \alpha$. We
consider three cases.

\noindent{\bf Macroscopic cycles:} Suppose that
  $n(1-q)^2\to 0$. In this regime, as shown in Theorem~\ref{thm:pdl}, the
  normalized cycle length $\frac{1}{n}|\CC_s|$ converges in
  distribution to the uniform distribution on $[0,1]$. Figures~\ref{fig:cycle_length_uni_n50} and
  \ref{fig:cycle_length_uni_n1000} suggest that, in fact, a stronger
  convergence takes place. For any sequence $1\le k_n\le n$ bounded away from $1$ and
  $n$ in a suitable manner, one has $\P[|\CC_s| = k_n]\cdot n \to 1$.

  Using Corollary~\ref{diam_lower_corr}, it may
  additionally be shown that the cycle spans the full interval, in
  the sense of the following convergence in distribution,
  \begin{equation*}
    \frac{\min(\CC_s)}{n}
    \stackrel{d}{\to}0 \quad\text{and}\quad \frac{\max(\CC_s)}{n} \stackrel{d}{\to}1.
  \end{equation*}

\noindent{\bf Microscopic cycles:} Suppose that
  $n(1-q)^2\to \infty$. It appears from simulations (see Figures \ref{fig:cycle_length} and \ref{fig:cycle_length_2})
  that the limiting distribution of the normalized cycle length $(1-q)^2|\CC_s|$ exists in
  this regime, but it is unclear what its form is. This
  limiting distribution, if it indeed exists, cannot be concentrated
  on a single point due to the lower bound on the variance of
  $(1-q)^2|\CC_s|$ given in Theorem~\ref{cycle_var}, used together
  with the bounds in Proposition~\ref{diam_upper_corr} and
  Theorem~\ref{mallows_chain_bounds}.

  The normalized cycle diameter $(1-q)^2(\max(\CC_s) - \min(\CC_s))$ seems simpler to analyze. Our
  results imply that there exist absolute constants $0<c_1\le
  c_2<\infty$ such that for any fixed $x\ge 0$,
  \begin{align*}
    \liminf \P \left[(1-q)^2(\max(\CC_s) - s)\ge x \right] &\ge e^{-c_2  x}, \\
     \limsup \P \left[(1-q)^2(\max(\CC_s) - s)\ge x \right] &\le e^{-c_1  x}.
  \end{align*}
  The lower bound follows from Corollary~\ref{diam_lower_corr} and
  the upper bound follows by again considering
  Proposition~\ref{diam_upper_corr} together with
  Theorem~\ref{mallows_chain_bounds}. We conjecture that, in fact,
  there exists a single absolute constant $c>0$ for which
  \begin{equation*}
    \lim \P \left[(1-q)^2(\max(\CC_s) - s)\ge x \right] = e^{-c  x},
    \quad x\ge 0.
  \end{equation*}
  That is, that the limiting distribution of $(1-q)^2(\max(\CC_s) -
  s)$ is exponential.
  By symmetry, the same is conjectured for $(1-q)^2(s -
  \min(\CC_s))$. Furthermore, we conjecture that the dependence
  between $\max(\CC_s)$ and $\min(\CC_s)$ disappears in this limit,
  so that the diameter $(1-q)^2(\max(\CC_s) -
  \min(\CC_s))$ converges in distribution to the sum of two
  independent, identically distributed, exponential random
  variables.

\noindent{\bf Intermediate regime:} Suppose that
  $n(1-q)^2\to \beta\in(0,\infty)$. We expect the limiting
  distributions to still exist in this regime and interpolate in a continuous manner the previous
  two cases. For the cycle diameter, this interpolation may possibly be
  achieved by truncation, as certainly $\max(\CC_s) - s\le n-s$.
  Recalling that $\frac{s}{n}\to \alpha$, we conjecture, for instance, that the limiting
  distribution of $(1-q)^2(\max(\CC_s) - s)$ is equal to that of
  $\min\{X, \beta(1-\alpha) \}$ where $X$ is the exponential
  random variable conjectured to give the limiting distribution in the previous regime.

\pgfplotsset{width=\textwidth}
\begin{figure}[h]
\begin{subfigure}{0.46\textwidth}
\begin{tikzpicture}
\begin{axis}[
xmin = 2,
xmax = 125,
ymin = 0,
ymax = 4,
ytick = {0, 1, 2, 3, 4},
yticklabels = {0\%, 1\%, 2\%, 3\%, 4\%},
line width = 1pt
]
\addplot[blue] table {CycleLength250n8q.dat};

\end{axis}
\end{tikzpicture}
\caption{   $n=250, \ q=0.8$}
\end{subfigure}
\quad
\begin{subfigure}{0.46\textwidth}
\begin{tikzpicture}
\begin{axis}[
xmin = 1,
xmax = 1000,
ymin = 0,
ymax = 0.25,
ytick = {0, 0.05, 0.10, 0.15, 0.20, 0.25},
yticklabels = {0\%, 0.05\%, 0.10\%, 0.15\%, 0.20\%, 0.25\%},
line width = 1pt,
yticklabel style={
/pgf/number format/precision=2,
/pgf/number format/fixed,
/pgf/number format/fixed zerofill,
}
]
\addplot[blue] table {CycleLength1000n99q.dat};

\end{axis}
\end{tikzpicture}
\caption{   $n=1000, \ q=0.99$}
\label{fig:cycle_length_uni_n1000}
\end{subfigure}
\caption{Distribution of the length of the cycle containing a
uniform random point. Obtained empirically with 1000000 samples.}
\label{fig:cycle_length_2}
\end{figure}
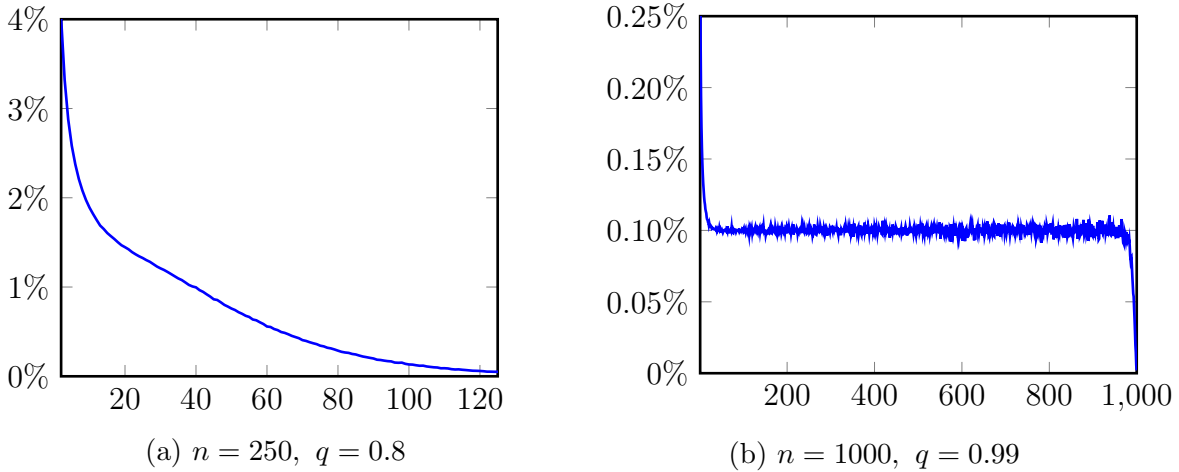

\medskip
   \textbf{Extensions of the parameter range.} As discussed in the
   introduction, there exist extensions of the Mallows distribution to infinite permutations; one-to-one and onto functions
$\pi\colon\N\to\N$ or $\pi\colon\Z\to\Z$. We expect the analogues of
our main theorems regarding the expected length, variance of the
length and expected diameter of cycles, Theorem~\ref{cycle_length},
Theorem~\ref{cycle_var} and equation \eqref{diam_equation} of
Theorem~\ref{cycle_diam_theorem}, to continue to hold for these
models, plugging formally $n=\infty$ and taking $s\in\N$ or $s\in\Z$
according to the case. Such results may follow from our methods,
using the sampling mechanism described in Section~\ref{ggm} for the
case $\pi\colon\Z\to\Z$, but we do not develop this further. The
approximation theorems of Gnedin and Olshanski \cite[Section
7.2]{GO12} may also prove useful in this context.

\pgfplotsset{width=\textwidth}
\begin{figure}
\begin{subfigure}[t]{0.46\textwidth}
\begin{tikzpicture}
\begin{axis}[
xmin = 0,
xmax = 1000,
ymin = 0,
ymax = 1000,
line width = 1pt,
]
\addplot[
red,
domain=0:900,
samples=901,
line width = 1pt
]
{900-x)};
\addplot[
red,
domain=100:1000,
samples=901,
line width = 1pt
]
{1100-x)};
\addplot+[blue, only marks, mark size = 0.1pt] table {mallowsSample1000n102q.dat};
\end{axis}
\end{tikzpicture}
\end{subfigure}
\begin{subfigure}[t]{0.46\textwidth}
\begin{tikzpicture}
\begin{axis}[
xmin = 0,
xmax = 1000,
ymin = 0,
ymax = 1000,
line width = 1pt,
]
\addplot[
red,
domain=0:859,
samples=860,
line width = 1pt
]
{x+100*sqrt(2))};
\addplot[
red,
domain=141:1000,
samples=860,
line width = 1pt
]
{x-100*sqrt(2))};
\addplot+[blue, only marks, mark size = 0.1pt] table {mallowsSample1000n102qX2.dat};
\end{axis}
\end{tikzpicture}
\end{subfigure}
\caption{
    On the left is a graph of a sample of the Mallows distribution $\mu_{n,q}$ with $n=1000$ and $q=1.02$. On the
    right is the graph of the composition of the same permutation
    with itself.
    The red lines are at vertical distance $\frac{2}{1-q}$ (left) and $\frac{2\sqrt{2}}{1-q}$ (right) from the diagonal. They delimit a region containing most of the points of the permutation.
} \label{fig:mallows_sample_x2}
\end{figure}
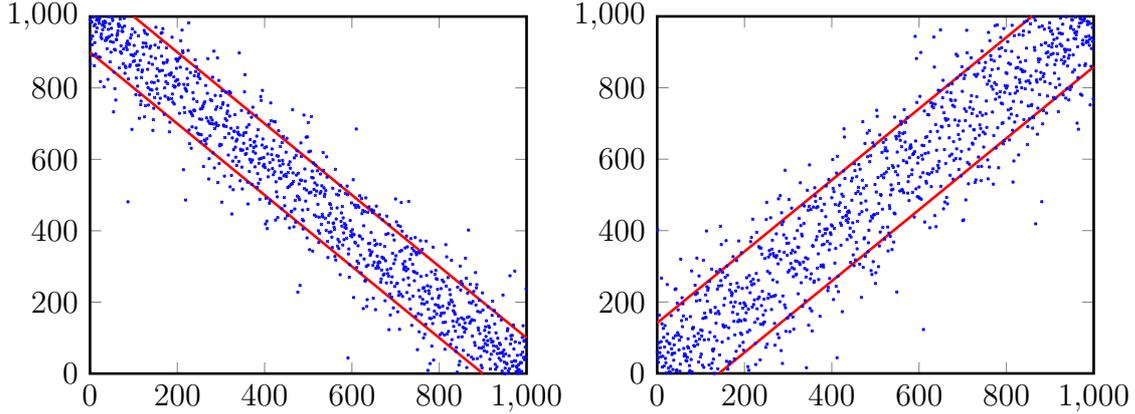

The Mallows distribution with parameters $n$ and $q$ is defined for
the case that $q>1$ via the same formula \eqref{eq:mu_n_q_def}. The
distributions with parameters $q$ and $\frac{1}{q}$ are related: If
$\pi\sim\mu_{n,q}$ then $\pi\circ r$, with $r_s = n-s+1$, is
distributed $\mu_{n,1/q}$. This operation corresponds to reflecting
the graph of the permutation $\pi$ across the line $x =
\frac{n+1}{2}$. Cycles are significantly affected by this operation
as, when $\pi\sim\mu_{n,q}$ with $q>1$, $\pi_i$ typically lies
around $n+1-i$, as follows from \eqref{eq:displacements} and the
above relation. Thus, for instance, the diameter of the cycle
containing $1$ in $\pi$ may be close to $n$ for all $q>1$. Still, we
would expect that for all $q>1$ the expected cycle lengths are still
of order $\min\left\{\frac{1}{(1-1/q)^{2}},\, n\right\}$ as in
Theorem~\ref{cycle_length} and further expect a Poisson-Dirichlet
limit law for the long cycles when $n\to\infty$, $q\to 1+$ and
$n(q-1)^2\to 0$ as in Theorem~\ref{thm:pdl}. This is suggested by
the fact that composing $\pi$ with itself leads to a permutation
whose graph is qualitatively similar to that of the Mallows model
with $q<1$, see Figure~\ref{fig:mallows_sample_x2}.

Our study of the cycle lengths of the Mallows permutation with
parameter $q<1$ was based on the diagonal exposure process.
Possibly, a similar process may be used to study the case $q>1$ by
exposing the graph of the permutation `from the center outwards'.
That is, exposing after $t$ iterations the portion of the graph
contained in a square of side length $2t$ around the mid-point
$\left(\frac{n+1}{2}, \frac{n+1}{2}\right)$. The ideas in
Section~\ref{ggm} may be useful in making such an approach rigorous.

\medskip
   \textbf{Band models.}
   We expect analogues of our results to hold for other natural models of
   random permutations whose graph typically has a `band structure'. For
   instance, for the interchange model on the one-dimensional graph $\{1,\ldots,
   n\}$ with nearest-neighbor edges. In this case, as briefly discussed in
   Section~\ref{sec:spatial_random_permutations}, the analogous result for the expected cycle
   length has been proved by Kozma and Sidoravicius. Another natural
   model is the band-Poisson model. Here, for an integer $n\ge 1$ and real $0<w\le n$, one considers a Poisson
   point process with intensity $\frac{1}{w}$ in the continuous band given by
   \begin{equation*}
     \{(x,y)\in[0,n]^2\mid |x-y|\le w\},
   \end{equation*}
   where the parameter $w$ controls the width of the band. With this definition,
   each vertical strip of width $1$ in $[0,n]^2$ contains on average a constant number of points.
   Each realization of the process gives rise to a permutation by taking
   the relative order of the points as in~\eqref{eq:relative_order_permutation}. We expect the
   analogues of our results to hold for this model with $w$ standing for $\frac{q}{1-q}$.

\medskip
\textbf{Higher dimensions and general graphs.} As discussed in
Section~\ref{sec:spatial_random_permutations}, the study of cycles
of spatial random permutations, random permutations biased towards
the identity in an underlying geometry, is of great interest. The
special case in which the geometry is that of $\R^d$ or $\Z^d$ is
particularly significant with relations to models of statistical
physics. In this context, our work pertains to the case $d=1$. With
other geometries in mind, we note here that a Mallows model may be
defined on any finite connected graph $G = (V,E)$ and parameter
$0<q\le 1$ by letting the probability of a permutation $\pi:V\to V$
be given by
\begin{equation*}
  \P_{G,q}[\pi] = \frac{1}{Z_{G,q}} q^{d(\pi, \textup{Id})},
\end{equation*}
where $Z_{G,q}$ is a normalization constant and $d(\pi,
\textup{Id})$ stands for the minimal number of adjacent
transpositions required to change $\pi$ to the identity permutation
$\textup{Id}\colon v \mapsto v$. Here, an adjacent transposition is
a transposition of the endpoints of an edge of $G$. For instance,
any transposition is allowed on the complete graph on $n$ vertices
$K_n$, whence the model coincides with the well-studied Ewens model~\cite{E72}, with parameter $\theta = \frac{1}{q}$.
This follows from the fact that
\begin{equation*}
  \P_{K_n, q}[\pi] = \frac{q^n}{Z_{K_n, q}}
  q^{-\mathcal{N}(\pi)}
\end{equation*}
with $\mathcal{N}(\pi)$ denoting the number of cycles (including
fixed points) in $\pi$. Our analysis of the Mallows model is based
on the exact sampling algorithm given by
\eqref{eq:sampling_formula}. Unfortunately, we are not aware of
corresponding algorithms for general graphs (though in the specific
case of the Ewens model an algorithm is given by the so-called
Chinese restaurant process). Nonetheless, it is of interest to
obtain results on the length of long cycles for general graphs $G$,
with the case that $G$ is a box in $Z^d$, $d\ge 2$, having special
significance.

\paragraph{Acknowledgment.} We thank Nayantara Bhatnagar, Gady Kozma, Grigori Olshanski and Sasha Sodin for useful
discussions. We are also grateful to an anonymous referee for a
detailed reading of the paper and many excellent comments which
contributed significantly to the presentation.
\end{section}

\end{document}